\documentclass[12pt]{amsart}
\usepackage{amssymb}
\usepackage{amsthm}
\usepackage[margin=1in]{geometry}
\usepackage{amstext}
\usepackage{amsbsy}
\usepackage{amscd}
\usepackage{enumerate}
\usepackage{chngpage}
\usepackage{mathtools}
\usepackage{amsmath}
\usepackage{hyperref}
\usepackage{tikz}
\usepackage{stmaryrd}
\usepackage{color}

\newtheorem{thm}{Theorem}[section]
 \newtheorem{cor}[thm]{Corollary}
\newtheorem{prop}[thm]{Proposition}
\theoremstyle{definition}
\newtheorem{defn}[thm]{Definition}
\newtheorem{question}[thm]{Question}

\makeatletter                                        
\let\@wraptoccontribs\wraptoccontribs                
\makeatother                                         
\newtheorem{thmA}{Theorem}[section]                  
\newtheorem{defnA}[thmA]{Definition A\ignorespaces}  
\newtheorem{rmkA}[thmA]{Remark A\ignorespaces}       
\theoremstyle{plain}                                 
\newtheorem{conjA}[thmA]{Conjecture A\ignorespaces}  

\newcommand{\Z}{\mathbb Z}

\newcommand{\R}{\mathbb R}

\newcommand{\HH}{\mathcal{H}}

\DeclareMathOperator{\GL}{GL}
\DeclareMathOperator{\Span}{Span}

\DeclareMathOperator{\corank}{corank}

\DeclareMathOperator{\Aut}{Aut}
\DeclareMathOperator{\col}{col}
\DeclareMathOperator{\cok}{cok}


\title{Most subrings of $\Z^n$ have large corank}
\author{Kelly Isham}
\address{Colgate University, Hamilton, NY 13346}
\email{kisham@colgate.edu}
\author{Nathan Kaplan}
\address{University of California, Irvine, CA 92697}
\email{nckaplan@math.uci.edu}

\address{City College of New York, New York, NY, 10031}
\email{gchinta@ccny.cuny.edu}

\date{\today}
\begin{document}
	
	\maketitle

\vspace{-.5cm}
{\centerline{(With an Appendix by Gautam Chinta)}}

	\begin{abstract}
If $\Lambda \subseteq \Z^n$ is a sublattice of index $m$, then $\Z^n/\Lambda$ is a finite abelian group of order $m$ and rank at most $n$.  Several authors have studied statistical properties of these groups as we range over all sublattices of index at most $X$.  In this paper we investigate quotients by sublattices that have additional algebraic structure.  While quotients $\Z^n/\Lambda$ follow the Cohen-Lenstra heuristics and are very often cyclic, we show that if $\Lambda$ is actually a subring, then once $n \ge 7$ these quotients are very rarely cyclic.  More generally, we show that once $n$ is large enough the quotient typically has very large rank. In order to prove our main theorems, we combine inputs from analytic number theory and combinatorics.  We study certain zeta functions associated to $\Z^n$ and also prove several results about matrices in Hermite normal form whose columns span a subring of $\Z^n$.
	\end{abstract}
	
	\section{Introduction}
	
	The focus of this article is about a family of random finite abelian groups that arise in number theory.  There has recently been extensive interest in this subject; see for example the survey of Wood \cite{WoodICM}.  We show that quotients of $\Z^n$ by random subrings do not look like quotients of $\Z^n$ by random sublattices.  This has an interpretation in terms of the distribution of cokernels of families of random integer matrices and the Cohen-Lenstra heuristics, a research topic that has been quite active in recent years.  
	
	In order to describe our results, we introduce some notation.  A \emph{sublattice} $\Lambda \subseteq \Z^n$ is a finite index subgroup, that is, we reserve this term for full-rank sublattices of $\Z^n$.  For vectors $u = (u_1,\ldots, u_n)$ and $w = (w_1,\ldots, w_n)$ in $\Z^n$, we write $u \circ w$ for the vector given by the componentwise product, $u \circ w = (u_1 w_1,\ldots, u_n w_n)$.  A sublattice $\Lambda \subseteq \Z^n$ is \emph{multiplicatively closed} if $u,w \in \Lambda$ implies $u \circ w \in \Lambda$.  A multiplicatively closed sublattice is a \emph{subring} if it also contains the multiplicative identity $(1,1,\ldots, 1)$.  In this article, we address questions of the following type.
\begin{question}
Let $X$ be a positive real number and $R \subseteq \Z^n$ be a subring of index at most $X$ chosen uniformly at random.  What does the quotient $\Z^n/R$ `look like'?  For example, as $X \rightarrow \infty$ how often is this group cyclic?
\end{question}
A main point of this article is to show that the additional algebraic structure possessed by subrings leads these quotients to very often have large rank.  In order to set up the contrast with quotients of random sublattices, we highlight some results about $\Z^n/\Lambda$ where $\Lambda \subseteq \Z^n$ is a sublattice of index at most $X$ chosen uniformly at random.

A finite abelian group $G$ can be written uniquely in terms of its invariant factors,
	\[
	G \cong \Z/\alpha_1 \Z \oplus \Z/\alpha_2 \Z \oplus \cdots \oplus \Z/\alpha_n\Z,
	\]
	where $\alpha_{i+1} \mid \alpha_i$ for $1 \le i \le n-1$.  The \emph{rank} of $G$ is the largest $i$ for which $\alpha_i > 1$.  The \emph{corank} of a sublattice $\Lambda \subseteq \Z^n$ is the rank of $\Z^n/\Lambda$.  A sublattice $\Lambda \subseteq \Z^n$ is \emph{cocyclic} if either $\Lambda = \Z^n$ or $\Lambda$ has corank $1$.  
	
	Nguyen and Shparlinski compute the proportion of sublattices of $\Z^n$ that are cocyclic \cite{ns}, a result which also follows by a different method from earlier work of Petrogradsky \cite{petro}.  Chinta, Kaplan, and Koplewitz generalize this result \cite{CKK}, determining the proportion of sublattices of $\Z^n$ that have corank at most $k$ for any $k \in [1,n]$.  Throughout this paper we use $p$ to denote a prime number and write $\prod_p$ for a product over all primes.
	\begin{thm}\label{CKK_thm}\cite[Corollary 1.2]{CKK}\label{thm:lattice_corank}
		Let $n,k$ be positive integers satisfying $1 \le k \le n$.  Then,
		\begin{eqnarray*}
			p_{n,k} & := & \lim_{X \rightarrow \infty} \frac{\#\left\{\text{Sublattices } \Lambda \subseteq \Z^n\colon [\Z^n\colon \Lambda] < X \text{ and } \Lambda \text{ has corank at most } k\right\}}{\#\left\{\text{Sublattices }  \Lambda \subseteq \Z^n\colon [\Z^n\colon \Lambda] < X\right\}} \\
			& = &  
			\prod_p \left( \prod_{j=1}^n (1-p^{-j})^2
			\cdot
			\sum_{i=0}^k \frac{1}{p^{i^2} \prod_{j=1}^i (1-p^{-j})^2 \prod_{j=1}^{n-i} (1-p^{-j})}\right).
		\end{eqnarray*}
	\end{thm}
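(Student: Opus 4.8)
The plan is to express the proportion as a ratio of Dirichlet series and factor both into Euler products. Let
\[
f(s)\;=\;\sum_{\Lambda\subseteq\Z^n}[\Z^n:\Lambda]^{-s}\;=\;\prod_{j=0}^{n-1}\zeta(s-j),
\]
the subgroup zeta function of $\Z^n$, which converges for $\operatorname{Re}(s)>n$ and has a simple pole at $s=n$. The $p$-rank of $\Z^n/\Lambda$ equals $\dim_{\F_p}\!\big((\Z^n/\Lambda)\otimes\F_p\big)$, and $\corank(\Lambda)$ is the maximum of these $p$-ranks over all primes $p$, so $\corank(\Lambda)\le k$ is a conjunction of one condition per prime. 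Using the local--global correspondence $\Lambda\leftrightarrow(\Lambda\otimes\Z_p)_p$, the restricted series $g(s):=\sum_{\corank(\Lambda)\le k}[\Z^n:\Lambda]^{-s}$ then factors as $g(s)=\prod_p g_p(s)$, where $g_p(s)=\sum[\Z_p^n:\Lambda_p]^{-s}$ ranges over sublattices $\Lambda_p\subseteq\Z_p^n$ with $\Z_p^n/\Lambda_p$ of rank at most $k$.

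Next I would reduce the theorem to evaluating the local factor by a Tauberian argument. Writing $f(s)=\prod_p f_p(s)$ with $f_p(s)=\prod_{j=0}^{n-1}(1-p^{\,j-s})^{-1}$, the explicit formula for $g_p(s)$ obtained below (read off for general $s$) gives $g_p(s)=1+p^{\,n-1-s}+O(p^{-2})$ uniformly for $\operatorname{Re}(s)$ in a fixed neighborhood of $n$. Hence $g(s)=\zeta(s-n+1)\,\tilde g(s)$ with $\tilde g(s)=\prod_p(1-p^{\,n-1-s})g_p(s)$ convergent and holomorphic, and nonvanishing at $s=n$, in a neighborhood of the line $\operatorname{Re}(s)=n$, so $g(s)$, like $f(s)$, has a simple pole at $s=n$ and no other singularity on that line. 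Moreover the order-$p^{-1}$ term of $g_p(s)$ is exactly cancelled by the $j=n-1$ Euler factor of $f_p(s)^{-1}$, giving $g_p(s)/f_p(s)=1+O(p^{-2})$ uniformly for $s$ near $n$, so $\prod_p g_p(s)/f_p(s)$ converges uniformly there. Applying the Ikehara--Delange Tauberian theorem to $f$ and (for $k\ge1$; the case $k=0$ is trivial since $\Z^n$ is the only corank-$0$ sublattice) to $g$ yields
\[
\frac{\#\{\Lambda:[\Z^n:\Lambda]<X,\ \corank(\Lambda)\le k\}}{\#\{\Lambda:[\Z^n:\Lambda]<X\}}\ \longrightarrow\ \frac{\operatorname{Res}_{s=n}g}{\operatorname{Res}_{s=n}f}\;=\;\lim_{s\to n^+}\frac{g(s)}{f(s)}\;=\;\prod_p\frac{g_p(n)}{f_p(n)},
\]
and it remains to compute $g_p(n)$.

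For the local computation I stratify by the rank $i\in\{0,1,\dots,k\}$ of $\Z_p^n/\Lambda_p$. For a finite abelian $p$-group $G$ of rank $r$, the number of sublattices $\Lambda_p\subseteq\Z_p^n$ with $\Z_p^n/\Lambda_p\cong G$ equals $\#\operatorname{Surj}(\Z_p^n,G)/|\Aut(G)|$, and reducing modulo $p$ shows $\#\operatorname{Surj}(\Z_p^n,G)=|G|^n\prod_{j=0}^{r-1}(1-p^{\,j-n})$; hence
\[
g_p(s)\;=\;\sum_{i=0}^{k}\ \prod_{j=0}^{i-1}(1-p^{\,j-n})\ \sum_{\operatorname{rank}(G)=i}\frac{|G|^{\,n-s}}{|\Aut(G)|}.
\]
The key observation is that a $p$-group of rank exactly $i$ is precisely $\Z_p^i/\Lambda'$ with $\Lambda'\subseteq p\Z_p^i$, i.e.\ $\Lambda'=p\Lambda''$ for an \emph{arbitrary} sublattice $\Lambda''\subseteq\Z_p^i$. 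Since $[\Z_p^i:p\Lambda'']=p^i[\Z_p^i:\Lambda'']$, and for such $G$ one has $\#\operatorname{Surj}(\Z_p^i,G)=|G|^i\prod_{\ell=1}^{i}(1-p^{-\ell})$, applying the surjection count with ambient lattice $\Z_p^i$ together with $\sum_{\Lambda''\subseteq\Z_p^i}[\Z_p^i:\Lambda'']^{-u}=\prod_{j=0}^{i-1}(1-p^{\,j-u})^{-1}$ gives
\[
\sum_{\operatorname{rank}(G)=i}\frac{|G|^{\,n-s}}{|\Aut(G)|}\;=\;\frac{p^{-i^2-i(s-n)}}{\prod_{\ell=1}^{i}(1-p^{-\ell})\,\prod_{\ell=1}^{i}(1-p^{\,n-\ell-s})},
\]
which at $s=n$ equals $1\big/\big(p^{i^2}\prod_{\ell=1}^{i}(1-p^{-\ell})^2\big)$. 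Substituting and using $\prod_{j=0}^{i-1}(1-p^{\,j-n})=\prod_{\ell=1}^{n}(1-p^{-\ell})\big/\prod_{\ell=1}^{n-i}(1-p^{-\ell})$ yields
\[
g_p(n)\;=\;\prod_{\ell=1}^{n}(1-p^{-\ell})\ \sum_{i=0}^{k}\frac{1}{p^{\,i^2}\prod_{\ell=1}^{i}(1-p^{-\ell})^2\prod_{\ell=1}^{n-i}(1-p^{-\ell})},
\]
and since $f_p(n)^{-1}=\prod_{\ell=1}^{n}(1-p^{-\ell})$, the product $\prod_p g_p(n)/f_p(n)$ is exactly the asserted formula for $p_{n,k}$.

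The conceptual key is the ``$\Lambda'=p\Lambda''$'' step, which collapses the rank-stratified sum $\sum_{\operatorname{rank}(G)=i}|\Aut(G)|^{-1}$ into the closed form $p^{-i^2}\prod_{\ell=1}^{i}(1-p^{-\ell})^{-2}$; once this is in hand, the rest of the local computation is a matter of bookkeeping with the several interlocking products of $(1-p^{-\ell})$'s, where the index shifts need a little care. I expect the main obstacle to be the analytic side: one must verify rigorously that $g(s)$ continues slightly past $\operatorname{Re}(s)=n$ with only the simple pole at $s=n$ --- in particular that $\tilde g(s)=\prod_p(1-p^{\,n-1-s})g_p(s)$ converges and is holomorphic there and that $\zeta(s-n+1)$ has no zero on $\operatorname{Re}(s)=n$ --- and that the cancellation $g_p(s)/f_p(s)=1+O(p^{-2})$ holds with constants uniform for $s$ in a neighborhood of $n$, so that the limit $s\to n^+$ may be moved inside the Euler product.
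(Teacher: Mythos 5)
The paper states this result as \cite[Corollary 1.2]{CKK} without reproducing a proof, so there is no in-text argument to compare yours against; I can only assess your argument on its own terms and against what I know of [CKK]. Your proof is correct. The Euler factorization of the corank-restricted series $g(s)$, the identification of the number of $\Lambda_p$ with $\Z_p^n/\Lambda_p\cong G$ as $\#\operatorname{Surj}(\Z_p^n,G)/|\Aut(G)|$ with $\#\operatorname{Surj}(\Z_p^n,G)=|G|^n\prod_{j=0}^{r-1}(1-p^{j-n})$, the ``$\Lambda'=p\Lambda''$'' bijection collapsing $\sum_{\operatorname{rank}(G)=i}|\Aut(G)|^{-1}$ to $p^{-i^2}\prod_{\ell=1}^{i}(1-p^{-\ell})^{-2}$ via the local subgroup zeta function of $\Z^i$, and the final bookkeeping with the $(1-p^{-\ell})$ products all check out. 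It is the same style of argument as in [CKK] (Dirichlet series, Euler products, local mass formulas, Tauberian theorem); the main organizational difference is that [CKK] compute the full multivariate cotype zeta function of $\Z^n$ and obtain the corank distribution as a specialization, while you go straight for the single-variable $g_p(s)$, which is leaner if one only wants corank probabilities. The one point that genuinely needs care --- and you flag it --- is the analytic step: to apply Delange's theorem to $g$ and to move $\lim_{s\to n^+}$ inside $\prod_p g_p(s)/f_p(s)$, you should make explicit that the coefficient of $p^{-es}$ in $g_p(s)$ is bounded by $a_{p^e}(\Z^n)$, which is $\ll e^{n-1}p^{e(n-1)}$; this gives $g_p(s)=1+p^{n-1-s}+O(p^{-3/2})$ and $g_p(s)/f_p(s)=1+O(p^{-3/2})$ uniformly in a strip $\operatorname{Re}(s)>n-\delta$, which is enough for both the holomorphy of $\tilde g(s)=\prod_p(1-p^{n-1-s})g_p(s)$ past the line $\operatorname{Re}(s)=n$ and the interchange of limit and product.
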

	These probabilities arise in the famous Cohen--Lenstra heuristics, which were developed during the study of statistical questions about Sylow $p$-subgroups of class groups of families of number fields \cite{cohen_lenstra}.  Let $P_n$ be the distribution on finite abelian $p$-groups of rank at most $n$ that chooses a finite abelian $p$-group $G$ of rank $r \le n$ with probability
	\begin{equation}\label{eqn:PnG}
	P_n(G) = \frac{1}{\#\Aut(G)}\left(\prod_{i=1}^n (1-p^{-i})\right)\left(\prod_{i=n-r+1}^n (1-p^{-i})\right).
	\end{equation}
	Theorem \ref{CKK_thm} states that the proportion of $\Lambda \subseteq \Z^n$ that have corank at most $k$ is equal to the product over all primes of the probability that a finite abelian $p$-group chosen from the distribution $P_n$ has rank at most $k$.  These connections are explained in \cite{CKK}.  Even for small values of $k$, when $n$ goes to infinity these probabilities are quite large.  For example, for large $n$ the proportion of cocyclic sublattices $\Lambda \subseteq \Z^n$ is approximately $85\%$, the proportion with corank at most $2$ is approximately $99.4\%$, and the proportion with corank at most $3$ is approximately $99.995\%$.

	The main theorem of this paper is that for large $n$, while the vast majority of random sublattices of $\Z^n$ have small corank, it is extremely rare for a random subring of $\Z^n$ to have small corank.  
	\begin{thm}\label{proportion_corank_k}
		Let $n$ and $k$ be positive integers satisfying $1 \le k < n$.  Define
		\[
		p^R_{n,k} = \lim_{X\rightarrow \infty} \frac{\#\left\{\text{Subrings }  R \subseteq \Z^n\colon [\Z^n\colon R] < X \text{ and } R \text{ has corank at most } k\right\}}{\#\left\{\text{Subrings }  R \subseteq \Z^n\colon [\Z^n\colon R] < X\right\}}.
		\]
		\begin{enumerate}
			\item If $k \in \{1,2,3\}$  and $n \ge 6$, then $p^R_{n,k} = 0$.
			\item If $n \ge 7$, then $p^R_{n,4} = 0$.
			\item If $k \le (6-4\sqrt{2})n + 2\sqrt{2}-\frac{8}{3}$ and $n \ge 7$, then $p^R_{n,k} = 0$. 
		\end{enumerate}
	\end{thm}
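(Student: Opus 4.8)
The plan is to express $p^R_{n,k}$ as a ratio of coefficient sums of two Dirichlet series and to show that the rightmost singularity of the numerator series lies strictly to the left of (or is of strictly smaller order than) that of the denominator series. Let $Z(s)=\sum_R[\Z^n:R]^{-s}$ be the sum over all subrings $R\subseteq\Z^n$, and let $Z_{\le k}(s)$ be the same sum restricted to subrings of corank at most $k$. Both have nonnegative coefficients, and since the index of a subring and the $p$-rank of its quotient are determined locally, both factor as Euler products $Z(s)=\prod_p Z_p(s)$ and $Z_{\le k}(s)=\prod_p Z_{p,\le k}(s)$. By a Tauberian theorem the two counting functions in the statement grow like $X^{a}(\log X)^{b-1}$ and $X^{a'}(\log X)^{b'-1}$, where $(a,b)$ and $(a',b')$ record the location and order of the rightmost singularity of $Z$ and $Z_{\le k}$; hence $p^R_{n,k}=0$ as soon as $(a',b')$ is lexicographically smaller than $(a,b)$.

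First I would determine the Euler factors. For a prime $p$ the reduction mod $p$ of a subring $R\subseteq\Z_p^n$ is a unital subalgebra of $\F_p^n$, and over $\F_p$ these are exactly the ``partition subalgebras'' $A_\pi=\{v:v_i=v_j\text{ whenever }i\sim_\pi j\}$ for set partitions $\pi$ of $\{1,\dots,n\}$; the $p$-rank of $\Z^n/R$ equals $n-\#\pi$. Thus $\corank R=\max_p(n-\#\pi_p)$, and corank at most $k$ forces $\#\pi_p\ge n-k$ at every $p$. Grouping subrings of $\Z_p^n$ by reduction type and lifting block idempotents $p$-adically, one shows that a subring with reduction $A_\pi$ factors as a product, over the blocks $B$ of $\pi$, of subrings of the local rings $R_b:=\Z_p+p\Z_p^b$ with $b=|B|$. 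This gives
\[
Z_p(s)=\sum_{\pi}\ \prod_{B\in\pi}p^{-(|B|-1)s}\zeta_{R_{|B|}}(s),\qquad
Z_{p,\le k}(s)=\sum_{\pi:\,\#\pi\ge n-k}\ \prod_{B\in\pi}p^{-(|B|-1)s}\zeta_{R_{|B|}}(s),
\]
where $\zeta_{R_b}(s)=\sum_{R'\subseteq R_b}[R_b:R']^{-s}$; one checks $\zeta_{R_1}=1$ and $\zeta_{R_2}(s)=(1-p^{-s})^{-1}$. Writing $\zeta_{R_b}(s)=\sum_{e\ge0}\nu_b(p^e)p^{-es}$ with $\nu_b(p^e)$ a polynomial in $p$ of degree $\delta_{b,e}$, a standard comparison of these Euler products with products of shifted Riemann zeta functions shows that the rightmost singularity of $Z$ sits at
\[
s=\sup\Big\{\tfrac{1+\sum_i\delta_{b_i,e_i}}{\sum_i(b_i-1+e_i)}\ :\ b_i\ge2,\ e_i\ge0,\ \textstyle\sum_i b_i\le n\Big\},
\]
and that of $Z_{\le k}$ at the same supremum with the extra constraint $\sum_i(b_i-1)\le k$. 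The whole problem is now reduced to controlling the integers $\delta_{b,e}$ and to a finite optimization.

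The combinatorial core is to bound $\delta_{b,e}$, the degree in $p$ of the number of subrings of $R_b=\Z_p+p\Z_p^b$ of index $p^e$. Here I would present such a subring as the column span of a Hermite normal form matrix and translate closure under the componentwise product $\circ$ into a system of quadratic congruences on the matrix entries; counting the HNF matrices that satisfy them gives the bound on $\delta_{b,e}$. For $b\le 5$ --- all that is needed for parts (1) and (2), since corank at most $k$ forces every block to have size at most $k+1$ --- one computes $\zeta_{R_b}$ essentially explicitly, which pins down exactly when the rightmost singularity of $Z$ moves to the right of that of $Z_{\le k}$ and yields the sharp thresholds $n\ge 6$ for $k\le3$ and $n\ge 7$ for $k=4$. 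For general $b$ one proves an explicit (roughly linear in $b$ and $e$) inequality for $\delta_{b,e}$. For part (3) one then optimizes: the constraint $\sum_i(b_i-1)\le k$ bounds both the block sizes and the total ``excess'', and the supremum defining the singularity of $Z_{\le k}$ is attained (up to the bound just proved) at a configuration of $t$ equal blocks; optimizing the resulting exponent over $t$ is a one-variable problem whose critical point involves $\sqrt2$, producing the constant $6-4\sqrt2=2(\sqrt2-1)^2$, and the hypothesis $k\le(6-4\sqrt2)n+2\sqrt2-\tfrac83$ is precisely the condition under which this value stays below the (larger, since unconstrained) value attached to $Z$, which for $n\ge7$ exceeds $1$.

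The step I expect to be the main obstacle is the bound on $\delta_{b,e}$: because closure under $\circ$ produces \emph{quadratic} rather than linear congruences, counting the Hermite normal form matrices whose columns span a subring of $R_b$ requires the delicate case analysis advertised in the abstract, and getting it sharp enough is exactly what is needed for the constant $6-4\sqrt2$ (rather than a weaker one) to emerge in part (3).
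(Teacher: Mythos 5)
Your high-level plan — set up Euler products for the two Dirichlet series, apply a Tauberian theorem, and compare rightmost singularities — is the paper's strategy, and your local decomposition of a $p$-primary subring along the partition $\pi$ into ``block'' factors is in substance Liu's irreducible decomposition, which the paper also relies on. But there is a genuine gap in the step you treat as routine: the claim that ``a standard comparison of these Euler products with products of shifted Riemann zeta functions'' identifies the rightmost singularity of $Z=\zeta_{\Z^n}^R$ (and of $Z_{\le k}$) with the supremum you write down. No such comparison is available here; the local factors are not finite products of factors $(1-p^{a-bs})^{-1}$ but genuine rational functions with large numerators, and in fact the abscissa of convergence of $\zeta_{\Z^n}^R(s)$ is \emph{unknown} for every $n\ge 6$. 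This is precisely why the paper never computes the exact pair $(a,b)$ for $Z$. Instead it works with asymmetric one-sided bounds: a lower bound for the denominator, $N_n(X)\gg X^{a(n)}$ with $a(n)\ge(3-2\sqrt2)(n-1)-(\sqrt2-1)$ (Isham, Theorem~\ref{ish_lower}), and an upper bound for the numerator, $H_{n,k}(X)\ll X^{k/2-2/3+\varepsilon}$ (Corollary~\ref{Hnk_upper_cor}). The latter is obtained from the combinatorial inequality $\tilde h_{n,k}(p^e)\le k(n-1)^k2^{n-1}f_{k+1}(p^e)$ (Theorem~\ref{general_upper}), which reduces the corank-$\le k$ count in $\Z^n$ to an \emph{unconstrained} count of subrings of $\Z^{k+1}$, and then imports the Kaplan--Marcinek--Takloo-Bighash bound on $N_{k+1}(X)$. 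Your attribution of the constant $6-4\sqrt2$ to an internal optimization over block configurations for $Z_{\le k}$ is therefore misplaced: in the paper $\sqrt2$ enters entirely through Isham's lower bound on the \emph{denominator}, the $2/3$ through the KMTB upper bound on $N_{k+1}$, and $6-4\sqrt 2$ is just the threshold at which the inequality $k/2-2/3<(3-2\sqrt2)(n-1)-(\sqrt2-1)$ holds.

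Two further points your sketch does not address. For part (1) with $n=6$, the rightmost poles of numerator and denominator are both at $s=1$, so the comparison has to be between pole \emph{orders}; your supremum formula only produces the pole location, and the paper needs a separate argument (Proposition~\ref{Z6_lower}, via the count of subrings of index $p^7$) to show the order for $N_6$ is $\ge 16$ while $H_{6,3}$ has order $15$. For part (2), the paper again avoids computing any abscissa: it uses $N_5(X)\sim C_5X(\log X)^9$ to force $H_{n,4}(X)=O(X^{1+\varepsilon})$ and then only needs $a(n)>1$ for $n\ge 7$. Your ``$b\le5$, compute $\zeta_{R_b}$ explicitly'' plan is plausible in spirit but would need to reproduce exactly these inputs, and your formula for $Z_p$ as a finite sum over partitions would still need the control of $\delta_{b,e}$ that you rightly flag as the main obstacle — the paper finesses this by bounding $\tilde h_{n,k}$ against $f_{k+1}$ and letting the existing analytic results on $N_{k+1}$ do the rest.
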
 
	\noindent For example, while approximately $85\%$ of sublattices of $\Z^7$ are cocyclic, the proportion of subrings of $\Z^7$ that are cocyclic is $0$.
	
		We prove Theorem \ref{proportion_corank_k} by studying the asymptotic rate of growth of the function
	\[
	H_{n,k}(X) = \#\left\{\text{Subrings }  R \subseteq \Z^n\colon [\Z^n\colon R] < X \text{ and } R \text{ has corank at most } k\right\}
	\]
	and comparing it to the asymptotic growth rate of the function 
	\[
	N_n(X) = \#\left\{\text{Subrings }  R \subseteq \Z^n\colon [\Z^n\colon R] < X\right\}.
	\]
	We use techniques from the theory of zeta functions of rings to prove upper bounds for the growth rate of $H_{n,k}(X)$ and compare these results to lower bounds of Isham for $N_n(X)$ \cite[Theorem 1.6]{ish_subrings}.  
	
	A key part of our argument involves counting special classes of matrices in Hermite normal form.  Every finite index sublattice $\Lambda \subseteq \Z^n$ is the column span of a unique $n \times n$ matrix $H(\Lambda)$ in Hermite normal form.  Moreover, $\Z^n/\Lambda$ is isomorphic to the cokernel of this matrix.  We discuss matrices in Hermite normal form and their cokernels at the start of Section \ref{sec:counting} and in Proposition \ref{SNF_prop}.  
    
    Theorem \ref{thm:lattice_corank} says that the probability $\Z^n/\Lambda$ has rank at most $k$ is what one would expect if these random groups were distributed according to the Cohen-Lenstra heuristics.  In \cite{CKK}, the authors also show that Sylow $p$-subgroups of $\Z^n/\Lambda$ are distributed according to the distribution $P_n(G)$ from \eqref{eqn:PnG}.  These results can be interpreted by saying that cokernels of random integer matrices in Hermite normal form follow the Cohen-Lenstra heuristics.  There is a general philosophy that cokernels of families of random integer and $p$-adic matrices should be distributed according to the Cohen-Lenstra heuristics, except when there is additional algebraic structure that must be taken into account; see for example \cite[Section 3.2]{WoodICM}.  Theorem \ref{proportion_corank_k} says that once $n$ is not too small, cokernels of $n \times n$ matrices in Hermite normal form whose columns span a subring of $\Z^n$ are not distributed according to the Cohen-Lenstra heuristics.  The additional algebraic structure of having a column span that is closed under componentwise multiplication leads to a completely different looking distribution of finite abelian groups.

In order to prove Theorem \ref{proportion_corank_k} we need only give an upper bound on the growth rate of $H_{n,k}(X)$, but for small values of $k$ we can prove something much more precise.  We give asymptotic formulas for $H_{n,k}(X)$ when $k \le 3$.
	\begin{thm}\label{Hnk_123}
		Suppose $k \in \{1,2,3\}$ and $n > k$.  There exists a positive real number $C_{n,k}$ so that  
		\[
		H_{n,k}(X) \sim C_{n,k} X (\log X)^{\binom{n}{2}-1}
		\]
		as $X \rightarrow \infty$.
	\end{thm}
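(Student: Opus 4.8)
The plan is to realize $H_{n,k}(X)$ as the summatory function of a Dirichlet series with a well-understood Euler product and then apply a Tauberian theorem. For $m\ge 1$ let $c_{n,k}(m)$ be the number of subrings $R\subseteq\Z^n$ of index $m$ with corank at most $k$, so that $H_{n,k}(X)=\sum_{m<X}c_{n,k}(m)$, and set $\zeta^R_{n,k}(s)=\sum_{m\ge 1}c_{n,k}(m)\,m^{-s}$. Since a subring of $\Z^n$ of index $m=\prod_p p^{a_p}$ is the direct product of its $p$-parts, and since the condition ``$R$ has corank at most $k$'' is equivalent to ``for every prime $p$ the $p$-rank of $\Z^n/R$ is at most $k$'' --- a condition detected locally --- the series $\zeta^R_{n,k}(s)$ has an Euler product
\[
\zeta^R_{n,k}(s)=\prod_p\zeta^R_{n,k,p}(s),\qquad
\zeta^R_{n,k,p}(s)=\sum_{R_p}[\Z_p^n\colon R_p]^{-s},
\]
the inner sum running over $\Z_p$-subalgebras $R_p\subseteq\Z_p^n$ with $\dim_{\Fp}\big(\Z_p^n/(R_p+p\Z_p^n)\big)\le k$. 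I would first prove this factorization (with the cokernel description from Proposition~\ref{SNF_prop} in hand) and record that there are exactly $\binom{n}{2}$ subrings of $\Z_p^n$ of index $p$, namely the ``diagonal'' subrings $\{v\in\Z_p^n\colon v_i\equiv v_j\pmod p\}$ for $1\le i<j\le n$. Consequently the coefficient of $p^{-s}$ in every local factor is exactly $\binom{n}{2}$, and these subrings are what produces the eventual pole.

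The key step is to pin down the local factor, and this is where the Hermite normal form analysis of Section~\ref{sec:counting} enters. Writing $N=\binom{n}{2}$, the goal is to show that $\zeta^R_{n,k,p}(s)$ is a rational function of $p^{-s}$, of a shape uniform in $p$, such that
\[
u_{n,k,p}(s):=\zeta^R_{n,k,p}(s)\,(1-p^{-s})^{N}
\]
is holomorphic in $\Re(s)>\tfrac12$ and satisfies $u_{n,k,p}(s)=1+O\!\big(p^{-2\Re(s)}\big)$, with implied constant depending only on $n$ and $k$. The ``$1$'' and the vanishing of the $p^{-s}$-coefficient come from the count of index-$p$ subrings just recorded; for the $O\!\big(p^{-2\Re(s)}\big)$-bound one classifies, for $k\le 3$, all corank-$\le k$ subrings of $\Z_p^n$. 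The point is that multiplicative closure together with the requirement $(1,\dots,1)\in R$ is rigid: modulo each power of $p$ a subring is cut out by ``partition-type'' congruences $v_i\equiv v_j$ (using that the unital subalgebras of $\Fp^n$ are precisely the ``constant on the blocks of a partition'' ones), and a corank-$\le 3$ subring of $\Z_p^n$ of index $p^j$ arises from a bounded tower of such conditions with bounded multiplicities; in particular their number is bounded independently of $p$ for each $j$. Hence no family of corank-$\le 3$ subrings proliferates fast enough (as $p$ grows) to contribute a factor $(1-p^{a-bs})^{-1}$ with $a\ge 1$, and $u_{n,k,p}(s)$ has the stated form. For instance the index-$p^2$ corank-$\le k$ subrings are the diagonal subrings taken modulo $p^2$, the intersections of two distinct index-$p$ diagonal subrings, and (when $k\ge 2$) the triples $\{v_i\equiv v_j\equiv v_\ell\pmod p\}$; a short computation then gives that the $p^{-2s}$-coefficient of $u_{n,k,p}(s)$ is $O(1)$. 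It follows that $U_{n,k}(s):=\prod_p u_{n,k,p}(s)$ converges absolutely and locally uniformly in a neighborhood of $s=1$, hence is holomorphic there, and that $U_{n,k}(1)=\prod_p\zeta^R_{n,k,p}(1)(1-p^{-1})^{N}$ is a convergent product of positive reals, so $U_{n,k}(1)>0$.

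Combining, $\zeta^R_{n,k}(s)=\zeta(s)^{N}\,U_{n,k}(s)$ with $U_{n,k}$ holomorphic and nonvanishing near $s=1$. The coefficients $c_{n,k}(m)$ are nonnegative, so by Landau's theorem the abscissa of convergence of $\zeta^R_{n,k}(s)$ equals its rightmost singularity; by the factorization and the convergence of $U_{n,k}(s)$ for $\Re(s)>1$, this singularity is at $s=1$, where $\zeta(s)^{N}$ has a pole of order $N=\binom{n}{2}$ and $U_{n,k}(1)\ne 0$. Applying a standard Tauberian theorem of Delange type to $\zeta^R_{n,k}(s)=\big((s-1)\zeta(s)\big)^{N}U_{n,k}(s)\cdot(s-1)^{-N}$ --- whose ``numerator'' $\big((s-1)\zeta(s)\big)^{N}U_{n,k}(s)$ is holomorphic near $s=1$ and takes the value $U_{n,k}(1)>0$ there --- yields
\[
H_{n,k}(X)\sim \frac{U_{n,k}(1)}{\big(\binom{n}{2}-1\big)!}\,X\,(\log X)^{\binom{n}{2}-1},
\]
which is the assertion, with $C_{n,k}=U_{n,k}(1)/\big(\binom{n}{2}-1\big)!>0$.

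I expect the main obstacle to be the rigidity statement in the second paragraph: one must control \emph{all} corank-$\le 3$ subrings of $\Z_p^n$ --- not merely those of index $p$ or $p^2$ --- and verify that they cannot contribute a pole in $\Re(s)\ge 1$ beyond the order-$\binom{n}{2}$ pole coming from the index-$p$ diagonal subrings. This is exactly the combinatorial content of the results on Hermite normal form matrices whose columns span a subring, and it is also where the restriction $k\le 3$ matters: for larger $k$, and for the unrestricted count $N_n(X)$, higher-corank subrings \emph{do} proliferate and change the picture. If one were content with the upper bound $H_{n,k}(X)=O\big(X(\log X)^{\binom{n}{2}-1}\big)$, which is all that Theorem~\ref{proportion_corank_k} ultimately requires, it would suffice to bound $u_{n,k,p}(s)$ from above; extracting the sharp asymptotic constant forces the exact evaluation of the local factors that is available when $k\le 3$.
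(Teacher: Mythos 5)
Your overall framework --- define the corank-at-most-$k$ zeta function, factor out $\zeta(s)^{\binom{n}{2}}$ using the count of index-$p$ subrings, show the remainder is a convergent Euler product near $s=1$, and invoke Delange's Tauberian theorem --- is exactly the paper's framework.  The pole order $\binom{n}{2}$ and the count $h_{n,1}(p)=\binom{n}{2}$ are correctly identified.  However, the route you take to control the local factors has a genuine gap.

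Your key ``rigidity'' claim --- that the number of corank-$\le 3$ subrings of $\Z_p^n$ of index $p^j$ ``is bounded independently of $p$ for each $j$,'' and hence that no factors $(1-p^{a-bs})^{-1}$ with $a\ge 1$ appear in $u_{n,k,p}(s)$ --- is false.  Already $g_3(p^3)=p+1$ (the number of irreducible subrings of $\Z^3$ of index $p^3$), so for $k=2$ and $e\ge 3$ the counts $h_{n,2}(p^e)$ grow polynomially in $p$; the same is worse for $g_4(p^e)$.  Correspondingly, the local factors do contain such ``bad'' geometric factors: the paper's Proposition \ref{prop:corank2_zeta} exhibits $(1-p^{1-3s})^{-1}$, and for $k=3$ Liu's rational formula for $\sum_e g_4(p^e)p^{-es}$ contributes $(1-p^{1-3s})^{-1}(1-p^{2-4s})^{-1}(1-p^{3-6s})^{-1}$.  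What saves the argument is not boundedness in $p$, but the quantitative fact that every such factor has $a\le b-2$, so it is analytic at $s=1$ and contributes $O(p^{-2})$ to the Euler product there.  That quantitative control is exactly what Theorems \ref{corank2_upper}, \ref{corank3_upper}, and \ref{general_upper} (proved via the Hermite-normal-form combinatorics of subring matrices, Proposition \ref{prop:subring_matrix_divis}, Theorem \ref{corank_diag_thm}, and Liu's explicit generating functions for $g_3$ and $g_4$) provide, and your sketch gives no substitute for it.  Once you replace the false boundedness claim with the explicit formulas for $h_{n,k}(p^e)$ in terms of $g_{k+1}(p^e)$ (or with the cruder but sufficient bound $h_{n,k}(p^e)\le (n-k)^k\binom{n-1}{k}g_{k+1}(p^e)$), the rest of your argument --- $U_{n,k}(s)$ holomorphic and nonvanishing near $s=1$, followed by the Tauberian theorem --- goes through as you wrote it.

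As a minor additional point: your claim that $u_{n,k,p}(s)$ is holomorphic on $\Re(s)>\tfrac12$ with $u_{n,k,p}(s)=1+O(p^{-2\Re(s)})$ in that half-plane is too strong.  Because of the $(1-p^{a-bs})^{-1}$ factors just discussed, the Euler product $\prod_p u_{n,k,p}(s)$ only converges for $\Re(s)$ greater than $\max(a+1)/b$ over the factors appearing (e.g.\ $\Re(s)>\tfrac34$ when $k=3$), which is all that is needed since this is still a neighborhood of $s=1$.
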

	\noindent We give a more precise description of $C_{n,k}$ in Section \ref{sec_corank}.  We discuss the growth of $H_{n,4}(X)$ at the end of that section.  In Section \ref{corank_smalln} we use our knowledge of $N_n(X)$ when $n \le 5$ to compute $p_{n,k}^R$ for all $k$ when $n \le 4$.  We prove that $p_{n,n-1} > 0$ for all $n$, and as a consequence of these results, we show that $p_{5,k} > 0$ for each $k \in \{1,2,3,4\}$.
Finally, an appendix by Gautam Chinta defines a multivariate zeta function  which encodes not only the coranks of subrings of $\Z^n$, but also the full cotypes.  A conjecture explicitly describing this {\em cotype zeta function} for $\Z^4$ is presented.
		
	\section{Counting lattices and counting subrings}\label{sec:counting}
	
	In this paper, we study the functions $H_{n,k}(X)$ and $N_n(X)$ by studying analytic properties of certain zeta functions associated to $\Z^n$.  We begin by introducing the subgroup zeta function of $\Z^n$.
	\begin{defn} \label{defn:zetaZn}
		Let $a_k(\Z^n)$ denote the number of sublattices $\Lambda \subseteq \Z^n$ with $[\Z^n\colon \Lambda] = k$.  Define
		\[
		\zeta_{\Z^n}(s) = \sum_{k=1}^\infty a_k(\Z^n) k^{-s},
		\]
		where $s$ is a complex variable.
	\end{defn}
	Since $\Z^n$ is a finitely generated nilpotent group, this zeta function has an Euler product and we can write
	\[
	\zeta_{\Z^n}(s) = \prod_p \zeta_{\Z^n,p}(s),\ \text{ where }\ \ \  \zeta_{\Z^n,p}(s) = \sum_{e=0}^\infty a_{p^e}(\Z^n) p^{-es}.
	\]
	Let $M_n(\Z)$ denote the set of $n \times n$ matrices with entries in $\Z$.  An invertible matrix $A \in M_n(\Z)$ with entries $a_{ij}$ is in \emph{Hermite normal form} if:
	\begin{enumerate}
		\item $A$ is upper triangular, and 
		\item $0 \le a_{ij} < a_{ii}$ for $1 \le i<j \le n$.
	\end{enumerate}
	Let $\HH_n(\Z)$ denote the set of invertible matrices $A\in M_n(\Z)$ that are in Hermite normal form.  Every sublattice $\Lambda\subseteq \Z^n$ is the column span of a unique matrix $H(\Lambda) \in \HH_n(\Z)$, and moreover, $[\Z^n \colon \Lambda] = \det(H(\Lambda))$.  Counting matrices in $\HH_n(\Z)$ with given determinant proves that 
	\begin{equation}\label{zeta_Zn}
		\zeta_{\Z^n}(s) = \zeta(s) \zeta(s-1)\cdots \zeta(s-(n-1)).
	\end{equation}
	See the book of Lubotzky and Segal for five proofs of this result \cite{LubotzkySegal}.  For an extensive introduction to this topic see the survey of Voll \cite{Voll} or the book of du Sautoy and Woodward~\cite{dusautoy}.
	
	Applying a standard Tauberian theorem allows one to deduce an asymptotic formula for $\sum_{k < X} a_{\Z^n}(k)$ in terms of analytic properties of $\zeta_{\Z^n}(s)$. The following Tauberian theorem is due to Delange. See \cite[Ch III, pages 121-122]{narkiewicz1984number} for an English translation.
	\begin{thm}\label{tauberian_theorem} \cite{delange}
		Let $F(s) = \sum_{n \ge 1} a(n) n^{-s}$ be a Dirichlet series with nonnegative coefficients that converges for $\Re(s) > \alpha > 0.$ If
  \begin{enumerate}
      \item $F(s)$ is analytic on $\Re(s) = \alpha$ except for $s=\alpha$ and 
      \item for $s \sim \alpha$ with $\Re(s) > \alpha$, 
      \[
      F(s) = \frac{G(s)}{(s-\alpha)^\beta}+ H(s)
      \]
      where $G(s)$ and $H(s)$ are analytic at $s= \alpha$ with $G(\alpha) \ne 0$
  \end{enumerate}
  then 
  $$
  \sum_{n \le X} a(n) \sim \frac{G(\alpha)}{\alpha \Gamma(\beta)} X^{\alpha} (\log X)^{\beta -1}.
  $$
	\end{thm}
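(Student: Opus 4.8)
The plan is to treat this as a genuine ``boundary'' Tauberian theorem: the hypotheses supply analyticity only on the single line $\Re(s)=\alpha$ (with no growth control off it), so a direct contour shift is not available, and one must instead run the Wiener--Ikehara method, adapted to a pole of possibly non-integral order $\beta$. First I would reduce to the case $\alpha=1$ by replacing $F(s)$ with $F(s+\alpha-1)$, whose coefficients $a(n)n^{1-\alpha}$ are still nonnegative and which satisfies the same hypotheses with $\alpha=1$ and leading factor $G(\alpha)$; the general-$\alpha$ conclusion, including the factor $1/\alpha$, then follows from the $\alpha=1$ case by partial summation together with $\int_1^X u^{\alpha-1}(\log u)^{\beta-1}\,du\sim\frac1\alpha X^\alpha(\log X)^{\beta-1}$. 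The value of the constant is in any case forced by the model identity
\[
\int_{1}^{\infty} x^{\alpha-1}(\log x)^{\beta-1}\,x^{-s}\,dx=\frac{\Gamma(\beta)}{(s-\alpha)^{\beta}}\qquad(\Re(s)>\alpha),
\]
so that the expected main term is exactly the partial sum of a measure whose ``zeta function'' has singular part $G(\alpha)/(s-\alpha)^{\beta}$.

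The argument then has three steps. (i) \emph{Subtract a model.} Pick a nonnegative $B(x)$ on $[1,\infty)$ equal to $\frac{G(\alpha)}{\Gamma(\beta)}x^{\alpha-1}(\log x)^{\beta-1}$ for $x\ge 2$ and smoothly cut off near $x=1$, and set $\widetilde F(s)=F(s)-\int_1^\infty B(x)x^{-s}\,dx$; by the displayed identity and hypotheses (1)--(2), $\widetilde F$ extends analytically to an open neighborhood of the closed half-plane $\Re(s)\ge\alpha$, in particular to a continuous function on the line $\Re(s)=\alpha$. (ii) \emph{Smoothed asymptotic.} Apply the Wiener--Ikehara kernel argument to $D(X):=\sum_{n\le X}a(n)-\int_1^X B(x)\,dx$: convolving the (suitably normalized) associated counting function against a Fej\'er-type kernel whose Fourier transform is smooth and compactly supported, and invoking the Riemann--Lebesgue lemma on the continuous boundary function $\widetilde F(\alpha+it)$, shows that the \emph{smoothed} version of $D$ is $o\bigl(X^\alpha(\log X)^{\beta-1}\bigr)$. (iii) \emph{De-smooth.} Since $a(n)\ge 0$ and $B\ge 0$, both $\sum_{n\le X}a(n)$ and $\int_1^X B$ are nondecreasing, so $D(X)$ is sandwiched between its smoothed values at $X(1\pm\varepsilon)$; because $x^{\alpha-1}(\log x)^{\beta-1}$ is regularly varying of index $\alpha-1$, the multiplicative error from this sandwich is $1+O(\varepsilon)$, and letting $\varepsilon\to0$ upgrades (ii) to $D(X)=o\bigl(X^\alpha(\log X)^{\beta-1}\bigr)$. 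Combining with $\int_1^X B(x)\,dx\sim\frac{G(\alpha)}{\alpha\Gamma(\beta)}X^\alpha(\log X)^{\beta-1}$ finishes the proof.

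The main obstacle is the interplay of steps (ii) and (iii) when $\beta$ is not a positive integer: one must propagate the slowly varying factor $(\log X)^{\beta-1}$ --- rather than a clean power of $X$ --- through both the kernel convolution and the monotonicity sandwich, and one must check that the model $B$ genuinely makes $\widetilde F$ analytic across the \emph{entire} line $\Re(s)=\alpha$, which is where hypothesis (1) (no other singularities on the line) and the cutoff near $x=1$ are both used. For integral $\beta$ this is exactly the classical Wiener--Ikehara theorem; and if one had the stronger hypothesis of meromorphic continuation with polynomial vertical growth slightly past $\Re(s)=\alpha$, one could bypass all of this by shifting the Perron integral for $\sum_{n\le X}a(n)$ past a Hankel contour encircling the branch point $s=\alpha$ and reading off the constant $\frac{1}{\alpha\Gamma(\beta)}$ from Hankel's formula for $1/\Gamma$ --- but in the stated generality the boundary-value Tauberian argument above is the right tool.
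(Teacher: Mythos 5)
The paper does not prove this theorem; it is stated as a known result of Delange \cite{delange}, with Narkiewicz's book cited for an English account, so there is no ``paper's own proof'' to compare against here.

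Your sketch captures the right family of techniques: this is indeed a boundary Tauberian theorem proved by a Wiener--Ikehara kernel argument (Delange's 1954 paper literally generalizes Ikehara's proof), and your steps (ii)--(iii) are the correct smoothing/de-smoothing strategy. There is, however, a concrete error in step (i). Your model satisfies
\[
\int_1^\infty B(x)\,x^{-s}\,dx=\frac{G(\alpha)}{(s-\alpha)^{\beta}}+(\text{entire}),
\]
so
\[
\widetilde F(s)=\frac{G(s)-G(\alpha)}{(s-\alpha)^{\beta}}+(\text{analytic near }\alpha).
\]
Since $G$ is analytic but generally not constant, $G(s)-G(\alpha)=O(s-\alpha)$ only yields $\widetilde F(s)=O\bigl((s-\alpha)^{1-\beta}\bigr)$, which still blows up at $\alpha$ whenever $\beta>1$. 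Thus the assertion that $\widetilde F$ ``extends analytically to an open neighborhood of the closed half-plane'' is false in general, and the kernel/Riemann--Lebesgue step (ii), which needs $\widetilde F$ to at least extend continuously to the line $\Re(s)=\alpha$, does not go through. The repair is standard but must be made: match the full singular expansion of $F$ at $\alpha$ by taking
\[
B(x)=x^{\alpha-1}\sum_{j=0}^{\lceil\beta\rceil-1}\frac{G^{(j)}(\alpha)}{j!\,\Gamma(\beta-j)}(\log x)^{\beta-1-j}
\]
(cut off near $x=1$), whose Mellin transform has singular part $\sum_{j}\frac{G^{(j)}(\alpha)/j!}{(s-\alpha)^{\beta-j}}$, so that $\widetilde F$ is genuinely continuous across the line. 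The added model terms are $o\bigl(x^{\alpha-1}(\log x)^{\beta-1}\bigr)$ and hence do not disturb the asymptotic, and nonnegativity of $B$ (which these corrections may spoil for moderate $x$) is not actually required --- only monotonicity of $\sum_{n\le X}a(n)$ enters the sandwich in (iii). With this fix your outline matches the standard proof of Delange's theorem.
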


See the recent survey of Alberts \cite{alberts} for several concrete examples of how Tauberian theorems are applied to Dirichlet series constructed from Euler products.
 
	The function on the right-hand side of equation \eqref{zeta_Zn} has its right-most pole at $s=n$, and this pole is a simple pole.  It is not difficult to see that this function satisfies the hypotheses of Theorem \ref{tauberian_theorem}, and then to carry out the application of this theorem. This leads to the following result.
	\begin{cor}
		Let $n$ be a positive integer $n$ and define
		\[
		L_n(X) = \#\{\text{Sublattices } \Lambda \subseteq \Z^n\colon [\Z^n \colon \Lambda] < X\}.
		\]
		We have
		\[
		L_n(X) \sim \frac{\zeta(n) \zeta(n-1)\cdots \zeta(2)}{n} X^n,
		\]
		as $X \rightarrow \infty$.
	\end{cor}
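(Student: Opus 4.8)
The plan is to apply Delange's Tauberian theorem (Theorem~\ref{tauberian_theorem}) to the Dirichlet series $F(s) = \zeta_{\Z^n}(s) = \sum_{k \ge 1} a_k(\Z^n) k^{-s}$. Its coefficients $a_k(\Z^n)$ are nonnegative because they count sublattices, and by the identity \eqref{zeta_Zn} we have $F(s) = \prod_{j=0}^{n-1} \zeta(s-j)$. Each factor $\zeta(s-j)$ converges absolutely for $\Re(s) > j+1$, so the product converges for $\Re(s) > n$, the constraint coming entirely from the rightmost factor $\zeta(s-(n-1))$; thus we take $\alpha = n$ in the theorem.

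Next I would verify the two hypotheses of Theorem~\ref{tauberian_theorem} on the line $\Re(s) = n$. Using the analytic continuation of the Riemann zeta function, each factor $\zeta(s-j)$ with $0 \le j \le n-2$ satisfies $\Re(s-j) \ge 2$ on this line and is therefore holomorphic (and nonvanishing) there. The remaining factor $\zeta(s-(n-1))$ has $\Re(s-(n-1)) = 1$ on this line, and since $\zeta$ is holomorphic on $\Re(w) = 1$ except for its simple pole at $w=1$, this factor is holomorphic on $\Re(s) = n$ away from $s = n$, where it contributes a simple pole. Hence $F$ is holomorphic on $\Re(s) = n$ except for a simple pole at $s = n$, which is hypothesis (1). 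For hypothesis (2), set $G(s) = (s-n)F(s)$ and $H(s) \equiv 0$; since the pole of $F$ at $s = n$ is simple, $G$ extends to a holomorphic function on a neighborhood of $s = n$, and $F(s) = G(s)/(s-n)^\beta$ with $\beta = 1$.

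Finally I would compute $G(n)$. Writing $w = s-(n-1)$ and using $\mathrm{Res}_{w=1}\zeta(w) = 1$, we obtain
\[
G(n) = \lim_{s \to n} (s-n)\,\zeta(s-(n-1)) \prod_{j=0}^{n-2}\zeta(s-j) = \zeta(n)\zeta(n-1)\cdots\zeta(2),
\]
a finite nonzero product (interpreted as $1$ when $n=1$). Theorem~\ref{tauberian_theorem} with $\alpha = n$ and $\beta = 1$ then yields $\sum_{k \le X} a_k(\Z^n) \sim \frac{G(n)}{n\,\Gamma(1)} X^n = \frac{\zeta(n)\zeta(n-1)\cdots\zeta(2)}{n} X^n$, and since replacing $\sum_{k \le X}$ by $\sum_{k < X} = L_n(X)$ changes the count by a term of lower order, the stated asymptotic follows. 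There is no real obstacle here: the argument is routine once \eqref{zeta_Zn} is available, and the only point that needs care is the holomorphy of $F$ on the boundary line $\Re(s) = n$, which reduces to the classical fact that $\zeta(w)$ is holomorphic on $\Re(w) = 1$ away from $w = 1$.
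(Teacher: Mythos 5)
Your argument is exactly the approach the paper indicates (the paper only remarks that ``it is not difficult to see that this function satisfies the hypotheses of Theorem~\ref{tauberian_theorem}'' and to ``carry out the application''); you have simply written out the verification in full. The details are correct: you identify $\alpha = n$, check hypothesis (1) on the line $\Re(s) = n$ by noting only the factor $\zeta(s-(n-1))$ can contribute a pole there, set up $G(s) = (s-n)F(s)$ with $\beta = 1$, and compute $G(n) = \zeta(n)\cdots\zeta(2)$ using the residue of $\zeta$ at $1$, which feeds correctly into the Tauberian formula $\frac{G(\alpha)}{\alpha\,\Gamma(\beta)}X^{\alpha}(\log X)^{\beta-1}$.
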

	
	A main idea of this paper is to follow a similar strategy for the subring zeta function of~$\Z^n$.
	\begin{defn}\label{defn:subring_zeta}
		Let $f_n(k)$ denote the number of subrings $R \subseteq \Z^n$ with $[\Z^n \colon R] = k$.  The \emph{subring zeta function of $\Z^n$} is defined by
		\[
		\zeta_{\Z^n}^R(s) = \sum_{k =1}^\infty f_n(k) k^{-s}.
		\]
	\end{defn}
	Just as $\zeta_{\Z^n}(s)$ has an Euler product, we have 
	\[
	\zeta_{\Z^n}^R(s) = \prod_p \zeta_{\Z^n,p}^R(s),\ \text{ where }\ \ \  \zeta_{\Z^n,p}^R(s) = \sum_{e=0}^\infty f_n(p^e) p^{-es}.
	\]
	
	The subring zeta function of $\Z^n$ is only known explicitly for $n \le 4$.
	\begin{thm}\label{GeneratingFunctions}
		We have 
		\begin{eqnarray*}
			\zeta_{\Z^2}^R(s) & = & \zeta(s), \\
			\zeta_{\Z^3}^R(s) & = & \frac{\zeta(3s-1) \zeta(s)^3}{\zeta(2s)^2}, \\
			\zeta_{\Z^4}^R(s) & = & \prod_p
			\frac{1}{(1-p^{-s})^2(1-p^2 p^{-4s})(1-p^3 p^{-6s})} \Big(1+4 p^{-s}+2 p^{-2s}\\
			& & +(4p-3) p^{-3s}+(5p-1)p^{-4s}  +(p^2-5p)p^{-5s}+(3p^2-4p) p^{-6s} \\ 
			& & -2p^2 p^{-7s}-4p^2 p^{-8s} - p^2 p^{-9s}{\Big)}.
		\end{eqnarray*}
	\end{thm}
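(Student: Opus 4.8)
The plan is to work one prime at a time, using the Euler product $\zeta_{\Z^n}^R(s)=\prod_p\zeta_{\Z^n,p}^R(s)$, and first to dispose of the unital condition by a quotient. Every subring $R\subseteq\Z^n$ contains $e:=(1,\ldots,1)$, and since $\Z^n/\Z e$ is torsion-free we may fix an identification $\pi\colon\Z^n\twoheadrightarrow\Z^{n-1}$, $(x_1,\ldots,x_n)\mapsto(x_2-x_1,\ldots,x_n-x_1)$, with kernel $\Z e$. As $\Z e\subseteq R$ we have $R=\pi^{-1}(\pi(R))$, so $R\mapsto\pi(R)$ is an index-preserving bijection from finite-index subrings of $\Z^n$ onto the sublattices $L\subseteq\Z^{n-1}$ for which $\pi^{-1}(L)$ is closed under $\circ$. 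For $u,w\in\pi^{-1}(L)$ the identity $u_iw_i-u_1w_1=w_1(u_i-u_1)+u_1(w_i-w_1)+(u_i-u_1)(w_i-w_1)$ gives $\pi(u\circ w)=w_1\,\pi(u)+u_1\,\pi(w)+\pi(u)\odot\pi(w)$, where $\odot$ is componentwise multiplication on $\Z^{n-1}$; since $w_1\,\pi(u)+u_1\,\pi(w)\in L$ automatically, $\pi^{-1}(L)$ is $\circ$-closed if and only if $L$ is $\odot$-closed. Thus $f_n(m)$ equals the number $g_{n-1}(m)$ of sublattices of $\Z^{n-1}$ of index $m$ that are closed under componentwise multiplication, and $\zeta_{\Z^n}^R(s)=\sum_{m\ge1}g_{n-1}(m)\,m^{-s}$.

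Next parametrize a sublattice $L\subseteq\Z^k$ by its Hermite normal form matrix $A\in\HH_k(\Z)$. Since $\odot$ is $\Z$-bilinear and symmetric, $L$ is $\odot$-closed if and only if $v_i\odot v_j\in L$ for the columns $v_1,\ldots,v_k$ of $A$ and all $i\le j$, and triangularity of $A$ turns each such membership into an explicit divisibility relation among the entries of $A$. Passing to the local factor, the diagonal entries of $A$ are powers of $p$ whose exponents sum to $v_p$ of the index, the strictly upper-triangular entries run over residues modulo the relevant diagonal entry, and $\zeta_{\Z^n,p}^R(s)$ is the sum of $p^{-(\text{index exponent})s}$ over all such data satisfying the divisibility relations. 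For $k=1$ this is trivial: every $m\Z$ is $\odot$-closed, so $g_1(m)=1$ and $\zeta_{\Z^2}^R(s)=\sum_{m\ge1}m^{-s}=\zeta(s)$.

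For $k=2$ (the case $n=3$), write $A=\left(\begin{smallmatrix}a&b\\0&d\end{smallmatrix}\right)$ with $0\le b<a$. The products $v_1\odot v_1$ and $v_1\odot v_2$ lie in $L$ automatically, and $v_2\odot v_2=(b^2,d^2)\in L$ is equivalent to $a\mid b(b-d)$. Hence $\zeta_{\Z^3,p}^R(s)=\sum_{\alpha,\delta\ge0}N(\alpha,\delta)\,p^{-(\alpha+\delta)s}$, where $N(\alpha,\delta)=\#\{\,b\bmod p^{\alpha}:p^{\alpha}\mid b(b-p^{\delta})\,\}$. Sorting the admissible $b$ by $v_p(b)$ yields a closed form for $N(\alpha,\delta)$; summing the resulting finitely many geometric series and simplifying identifies $\zeta_{\Z^3,p}^R(s)$ with $(1-p^{-2s})^2\big/\big((1-p^{\,1-3s})(1-p^{-s})^3\big)$, which is the Euler factor at $p$ of $\zeta(3s-1)\zeta(s)^3/\zeta(2s)^2$. (As a check, this gives $f_3(p)=3$ and $f_3(p^2)=4$, matching direct enumeration.)

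For $k=3$ (the case $n=4$), write $A=\left(\begin{smallmatrix}a&b&c\\0&d&e\\0&0&f\end{smallmatrix}\right)$ with $0\le b,c<a$ and $0\le e<d$. The three products $v_1\odot v_j$ lie in $L$ automatically, while $v_2\odot v_2$, $v_2\odot v_3$, $v_3\odot v_3$ translate respectively into $a\mid b(b-d)$, $\ a\mid b(c-e)$, and the pair $d\mid e(e-f)$, $\ a\mid c(c-f)-be(e-f)/d$. One then evaluates $\zeta_{\Z^4,p}^R(s)$ as a sum over $(a,b,c,d,e,f)$ with $a,d,f$ powers of $p$, the count of admissible $(b,c,e)$ being organized according to the $p$-adic valuations of $b,c,e$ relative to those of $a,d,f$; collecting the resulting geometric series and simplifying produces the displayed rational function over $(1-p^{-s})^2(1-p^2p^{-4s})(1-p^3p^{-6s})$. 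This last step is the main obstacle: the divisibility constraints interact heavily, so arranging the summation so that the pieces telescope into the stated numerator is delicate, and I would carry it out with the aid of symbolic computation, cross-checking against directly enumerated values of $f_4(p^e)$ for small $e$. One can also bypass the quotient reduction entirely and run the same computation with $n\times n$ Hermite normal form matrices $A$, imposing both $e\in\col(A)$ and $\circ$-closure of $\col(A)$; the quotient construction merely packages the unital condition out of the way.
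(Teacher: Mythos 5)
The paper does not actually prove the $n=3$ and $n=4$ formulas; it cites Datskovsky--Wright and Nakagawa, and only the $\Z^2$ case is proved directly (Proposition~\ref{subringZ2}). Your proposal is therefore a genuinely different route: a self-contained, uniform computation via Hermite normal form. Your quotient reduction is correct and cleanly verified --- the identity $\pi(u\circ w)=w_1\pi(u)+u_1\pi(w)+\pi(u)\odot\pi(w)$ does show that $R\mapsto\pi(R)$ is an index-preserving bijection from finite-index subrings of $\Z^n$ to $\odot$-closed sublattices of $\Z^{n-1}$; this is in effect the shift already implicit in the paper's convention that its $g_n$ is Liu's $g_{n-1}$, and the ``bypass'' you mention at the end is exactly the subring-matrix formalism the paper uses in Section~\ref{subring_matrices}. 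The divisibility conditions you derive from $v_i\odot v_j\in\col(A)$ for $k=2$ and $k=3$ are correct (it suffices to check generators $v_i\odot v_j$, $i\le j$, by bilinearity and symmetry of $\odot$), and your sanity checks $f_3(p)=3$, $f_3(p^2)=4$ are right.

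The gap is that for $\Z^4$ you set up the problem but do not carry out the computation: you acknowledge the summation is ``delicate'' and defer to symbolic computation and spot-checks. That is a reasonable research plan, but it is not a proof of the stated 10-term numerator over $(1-p^{-s})^2(1-p^2p^{-4s})(1-p^3p^{-6s})$; verifying finitely many $f_4(p^e)$ cannot certify a closed form. Similarly, for $\Z^3$ you assert rather than exhibit the closed form for $N(\alpha,\delta)$ and the resulting geometric-series collapse to $(1-p^{-2s})^2/((1-p^{1-3s})(1-p^{-s})^3)$. The approach is sound and, if pushed through in full, would recover Liu's computation of $B_2$ and $B_3$ and Nakagawa's quartic count; but as written the proposal establishes only the $\Z^2$ case and leaves the $\Z^3$ and $\Z^4$ identities as outlines.
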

	\noindent The statement for $\Z^3$ follows from work of Datskovsky and Wright \cite{DW}, and the statement for $\Z^4$ follows from work of Nakagawa \cite{Nakagawa}.
	
	The right-most pole of $\zeta_{\Z^3}^R(s)$ is located at $s=1$ and has order $3$, and the right-most pole of $\zeta_{\Z^4}^R(s)$ is located at $s=1$ and has order $6$.  Du Sautoy and Grunewald show that for any $n \ge 2$, the zeta function $\zeta_{\Z^n}^R(s)$ satisfies condition (1) of Theorem \ref{tauberian_theorem} \cite[Theorem 1.7]{dusautoy_grunewald}.  Therefore, one can carry out the application of Theorem \ref{tauberian_theorem} for $n \le 4$, leading to the following result.
	\begin{cor}\label{Nn_growth}
		We have 
		\begin{eqnarray*}
			N_2(X) & \sim & X,\\
			N_3(X) & \sim & \frac{1}{2\zeta(2)} X (\log X)^2,\\
			N_4(X) & \sim &\frac{1}{5! \zeta(2)^3}X  (\log  X)^5.
		\end{eqnarray*}
	\end{cor}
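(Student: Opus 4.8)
The plan is to apply Delange's Tauberian theorem (Theorem \ref{tauberian_theorem}) to the Dirichlet series $\zeta_{\Z^n}^R(s)=\sum_k f_n(k)k^{-s}$ for each $n\in\{2,3,4\}$, feeding in the explicit formulas of Theorem \ref{GeneratingFunctions}. The coefficients $f_n(k)$ are nonnegative, and from the explicit formulas one reads off that each $\zeta_{\Z^n}^R(s)$ has abscissa of convergence $\alpha=1$ and is analytic on the line $\Re(s)=1$ except for a pole at $s=1$ (this is condition (1) of Theorem \ref{tauberian_theorem}; du Sautoy and Grunewald \cite{dusautoy_grunewald} establish it for all $n$, but for $n\le 4$ it is transparent from Theorem \ref{GeneratingFunctions} because the only factor with a pole on $\Re(s)=1$ is the appropriate power of $\zeta(s)$). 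So the remaining work is, in each case, to determine the order $\beta$ of the pole at $s=1$ and the value $G(1)$ in a representation $\zeta_{\Z^n}^R(s)=G(s)(s-1)^{-\beta}+H(s)$ near $s=1$ with $G,H$ holomorphic at $s=1$ and $G(1)\neq 0$; then Theorem \ref{tauberian_theorem} with $\alpha=1$ gives $N_n(X)\sim \tfrac{G(1)}{\Gamma(\beta)}X(\log X)^{\beta-1}$.

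For $n=2$ this is immediate: $\zeta_{\Z^2}^R(s)=\zeta(s)$ has a simple pole at $s=1$ with residue $1$, so $\beta=1$, $G(1)=1$, and $N_2(X)\sim X/\Gamma(1)=X$. For $n=3$, in $\zeta_{\Z^3}^R(s)=\zeta(3s-1)\zeta(s)^3\zeta(2s)^{-2}$ the factor $\zeta(s)^3$ contributes a pole of order $3$ at $s=1$ while $\zeta(3s-1)$ and $\zeta(2s)^{-2}$ are holomorphic and nonzero there (their singularities lie at $s=2/3$ and at the zeros of $\zeta(2s)$, none on $\Re(s)=1$). Hence $\beta=3$ and $G(1)=\lim_{s\to1}\bigl((s-1)\zeta(s)\bigr)^3\zeta(3s-1)\zeta(2s)^{-2}=\zeta(2)/\zeta(2)^2=1/\zeta(2)$. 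Since $\Gamma(3)=2$, this yields $N_3(X)\sim\tfrac{1}{2\zeta(2)}X(\log X)^2$.

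The case $n=4$ requires more care and is the main point. Writing the Euler factor of Theorem \ref{GeneratingFunctions} as $\zeta_{\Z^4,p}^R(s)=W_p(s)\bigl((1-p^{-s})^2(1-p^{2-4s})(1-p^{3-6s})\bigr)^{-1}$ with $W_p(s)$ the numerator polynomial, I would regroup the global product as
\begin{equation*}
\zeta_{\Z^4}^R(s)=\zeta(s)^6\,\zeta(4s-2)\,\zeta(6s-3)\,E(s),\qquad E(s):=\prod_p W_p(s)\,(1-p^{-s})^4 ,
\end{equation*}
and then check that the coefficient of $p^{-s}$ in $W_p(s)(1-p^{-s})^4$ vanishes: indeed $W_p(s)=1+4p^{-s}+O(p^{-2+\varepsilon})$ uniformly for $s$ near $1$ (all other monomials $p^j p^{-ms}$ of $W_p$ have $j\le 2$, $m\ge 2$ with $j<m$, hence are $O(p^{-2+\varepsilon})$ there), while $(1-p^{-s})^4=1-4p^{-s}+O(p^{-2+\varepsilon})$, so their product is $1+O(p^{-2+\varepsilon})$. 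Thus the Euler product $E(s)$ converges and is nonvanishing on a neighborhood of $\Re(s)=1$. Since $\zeta(s)^6$ has a pole of order $6$ at $s=1$ and $\zeta(4s-2),\zeta(6s-3)$ are holomorphic and nonzero there, we get $\beta=6$ and $G(1)=\zeta(2)\zeta(3)E(1)$. To evaluate $E(1)$ I would simplify $W_p(1)$: collecting its coefficients in powers of $p^{-1}$ gives the clean factorization $W_p(1)=(1+p^{-1})^4(1-p^{-3})$, whence $W_p(1)(1-p^{-1})^4=(1-p^{-2})^4(1-p^{-3})$ and $E(1)=\zeta(2)^{-4}\zeta(3)^{-1}$. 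Therefore $G(1)=\zeta(2)\zeta(3)\cdot\zeta(2)^{-4}\zeta(3)^{-1}=\zeta(2)^{-3}$, and since $\Gamma(6)=5!$ we conclude $N_4(X)\sim\tfrac{1}{5!\,\zeta(2)^3}X(\log X)^5$. The one genuinely delicate step is the bookkeeping in this $n=4$ computation — verifying the cancellation that keeps $E(s)$ convergent just to the left of $\Re(s)=1$, and recognizing the factorization of $W_p(1)$ that collapses $E(1)$ to a ratio of values of $\zeta$; everything else is a routine extraction of polar parts followed by substitution into Theorem \ref{tauberian_theorem}.
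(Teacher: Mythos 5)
Your proposal is correct and follows the same route the paper intends: apply Delange's Tauberian theorem (Theorem \ref{tauberian_theorem}) to the explicit zeta functions of Theorem \ref{GeneratingFunctions}, citing du Sautoy--Grunewald for condition (1). The paper leaves the computation to the reader; you supply it, including the key factorization $W_p(1)=(1+p^{-1})^4(1-p^{-3})$ that collapses $E(1)$ to $\zeta(2)^{-4}\zeta(3)^{-1}$ in the $n=4$ case, and your intermediate values ($\beta=1,3,6$; $G(1)=1,\ \zeta(2)^{-1},\ \zeta(2)^{-3}$) all check out.
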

 
	For larger $n$ precise asymptotic formulas for $N_n(X)$ are not known. Kaplan, Marcinek, and Takloo-Bighash \cite{kmt} compute the order of growth for $n=5$  but are unable to determine the constant in the asymptotic.
	\begin{thm}\cite[Theorem 6]{kmt}\label{N5X}
		There exists a positive real number $C_5$ such that 
		\[
		N_5(X) \sim C_5 X (\log X)^9
		\]
		as $X \rightarrow \infty$.
	\end{thm}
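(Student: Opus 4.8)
The plan is to apply Delange's Tauberian theorem (Theorem~\ref{tauberian_theorem}) to the subring zeta function $\zeta^R_{\Z^5}(s)=\sum_{m\ge 1}f_5(m)m^{-s}$, exactly as was done for $n\le 4$ to obtain Corollary~\ref{Nn_growth}. Since $f_5$ is multiplicative and $f_5(p^e)\ge 1$ for every $e\ge 0$ --- for instance, the Hermite normal form matrix $\operatorname{diag}(p^e,1,1,1,1)$ with $(1,2)$-entry equal to $1$ has column span a subring of index $p^e$ --- we have $\zeta^R_{\Z^5}(s)\ge\zeta(s)$ for real $s>1$, so the abscissa of convergence $\alpha$ of $\zeta^R_{\Z^5}(s)$ is at least $1$; the matching bound $\alpha\le 1$ will be visible from the shape of the Euler factor obtained below. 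By \cite[Theorem~1.7]{dusautoy_grunewald}, $\zeta^R_{\Z^5}(s)$ is analytic on the line $\Re s=1$ except at $s=1$, so hypothesis~(1) of Theorem~\ref{tauberian_theorem} holds. Everything then reduces to hypothesis~(2): that $\zeta^R_{\Z^5}(s)$ has a pole of order \emph{exactly} $10$ at $s=1$ with nonzero (necessarily positive) leading Laurent coefficient. Granting this, Delange's theorem yields $N_5(X)=\sum_{m<X}f_5(m)\sim C_5\,X(\log X)^{9}$ with $C_5=\tfrac{1}{9!}\lim_{s\to 1}(s-1)^{10}\zeta^R_{\Z^5}(s)>0$.

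To control the pole at $s=1$ I would work prime by prime with $\zeta^R_{\Z^5,p}(s)=\sum_e f_5(p^e)p^{-es}$. Every subring $R\subseteq\Z_p^5$ of $p$-power index is the column span of a unique matrix $M=(m_{ij})\in\HH_5(\Z_p)$, with $m_{ii}=p^{e_i}$ and $0\le m_{ij}<p^{e_i}$ for $i<j$. Expanding the componentwise products $c_i\circ c_j$ of the columns $c_1,\dots,c_5$ of $M$ in this basis, the requirements that the column span contain $\mathbf 1=(1,1,1,1,1)$ and be closed under $\circ$ become an explicit system of congruences and divisibilities among the $m_{ij}$ and the exponents $e_i$. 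One then writes $\zeta^R_{\Z^5,p}(s)=\sum_M p^{-s(e_1+\dots+e_5)}$ over the admissible $M$, stratifies by the valuations of the entries and the set of ``active'' multiplication relations, and evaluates each stratum as a geometric sum; this expresses $\zeta^R_{\Z^5,p}(s)$ as a rational function $W_5(p,p^{-s})$ (giving the uniformity used above, together with $\alpha\le 1$). The crucial claim is that
\[
\zeta^R_{\Z^5,p}(s)=\frac{1}{(1-p^{-s})^{10}}\bigl(1+O(p^{-1-\delta})\bigr)
\]
uniformly for $s$ in a fixed neighborhood of $s=1$, for some $\delta>0$. Granting this, $\zeta^R_{\Z^5}(s)=\zeta(s)^{10}G_5(s)$ with $G_5(s)=\prod_p(1-p^{-s})^{10}\zeta^R_{\Z^5,p}(s)$ an absolutely convergent product near $s=1$; each factor is a positive real at $s=1$, so $G_5$ is holomorphic and nonzero there, which is precisely hypothesis~(2) with pole order $\beta=10$ and leading coefficient $G_5(1)>0$.

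The content of the crucial claim is that the ``dominant'' subrings of $\Z^5$ are those governed by the $\binom{5}{2}=10$ codimension-one unital subalgebras of $\Z^5/p\Z^5\cong\Fp^5$ --- those obtained by identifying one pair of coordinates --- which already explains the linear coefficient $f_5(p)=10$; after summing over all $p$-power indices these should account for the entire pole at $s=1$, while every other configuration of the Hermite normal form is subordinate. Concretely, one must show that each remaining stratum either forces powers of $p$ into the index that are not compensated by a free parameter (so its contribution has abscissa of convergence strictly below $1$), or contributes to the numerator of $W_5(p,t)$ only monomials $c\,p^i t^j$ with $i<j$ (which produce no pole at $s=1$ in the Euler product), or feeds a denominator factor $(1-p^{a}t^{b})$ with $b>a+1$ (whose pole lies at $s=(a+1)/b<1$). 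Carrying out this stratification of the $5\times 5$ Hermite normal form matrices and checking case by case that nothing outside the dominant family contributes to the pole of order $10$ is the main obstacle; the number of cases grows rapidly with $n$, which is why the method yields the \emph{order} of growth of $N_5(X)$ but not the constant $C_5$ (that would require the full rational function $W_5$, equivalently a closed formula for $\zeta^R_{\Z^5}(s)$ in the spirit of Theorem~\ref{GeneratingFunctions}), and why $n\ge 6$ remains out of reach. Isham's lower bounds \cite{ish_subrings} give an independent check that the pole order is at least $10$, so in practice only the ``nothing extra'' (upper bound on the pole order) half of the case analysis is essential.
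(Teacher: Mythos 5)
The paper does not prove this statement; it cites \cite[Theorem 6]{kmt} directly. Your sketch is a reasonable reconstruction of the strategy used there (stratify the Hermite normal form matrices, locate the pole order of the Euler product at $s=1$, invoke du Sautoy--Grunewald for analyticity on $\Re s = 1$, apply Delange), but it has a concrete error and a substantial gap.

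The error: you assert that ``Isham's lower bounds \cite{ish_subrings} give an independent check that the pole order is at least $10$,'' and propose to rely on this so that only the upper-bound half of the case analysis is essential. That is not what Isham's bound gives. For $n=5$ one computes $a(5)=\max_{0\le d\le 4}\frac{d(4-d)+1}{4+d}=\frac{5}{6}$, so Isham's theorem only yields $N_5(X)\gg X^{5/6}$, which is compatible with the rightmost pole of $\zeta^R_{\Z^5}(s)$ sitting strictly to the left of $s=1$ and says nothing about its order being $10$. The correct source of the lower bound on the pole order is a coefficientwise comparison with the corank-$1$ contribution: Proposition~\ref{count_cocyclic} gives $h_{5,1}(p^e)=\binom{5}{2}=10$, hence $\zeta^{R}_{\Z^5,p}(s)\ge 1+10p^{-s}+10p^{-2s}+\cdots = 1+\frac{10p^{-s}}{1-p^{-s}}$, and the Euler product of these already produces a pole of order $10$ at $s=1$. (This lower bound is also in \cite[Theorem 6]{kmt} itself, where $N_n(X)\gg X(\log X)^{\binom n2 -1}$ is shown for all $n$.) So you cannot discard the lower-bound work by appealing to Isham; either use the coefficientwise argument or cite \cite{kmt}.

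The gap: the ``crucial claim'' that $\zeta^R_{\Z^5,p}(s)=(1-p^{-s})^{-10}(1+O(p^{-1-\delta}))$ uniformly near $s=1$, equivalently that no stratum of Hermite normal form configurations contributes a pole at or to the right of $s=1$ beyond the dominant corank-one family, is precisely the theorem to be proved. You describe what a case analysis would have to verify (monomials $p^it^j$ with $i<j$, denominator factors $(1-p^at^b)$ with $b>a+1$) but do not perform it, and there is no shortcut around it: this stratification of $5\times 5$ subring matrices is the entire content of \cite[Theorem 6]{kmt} for $n=5$. As written, your argument reduces a theorem to a claim of comparable difficulty that you leave unestablished, so it does not constitute a proof.
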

	
	In the proof of Theorem \ref{proportion_corank_k}, we apply a lower bound for the growth of $N_n(X)$ for $n \ge 7$.  Kaplan, Marcinek, and Takloo-Bighash show that for any $n$, there exists a positive real number $C_n$ such that for $X$ sufficiently large $N_n(X) > C_n X (\log X)^{\binom{n}{2}-1}$ \cite[Theorem 6]{kmt}.  Building on work of Brakenhoff \cite{brakenhoff}, Isham proves a lower bound that is much stronger for $n \ge 7$.
	\begin{thm}\cite[Theorem 1.6 and Proposition 5.4]{ish_subrings}\label{ish_lower}
		Let
		\[
		a(n) =  \max_{0 \leq d \leq n-1} \left(\frac{d(n-1-d)}{n-1+d} + \frac{1}{n-1+d}\right)
		\]
		where $\max_{0 \leq d \leq n-1}$ is a maximum over integers $d \in [0, n-1]$.
		\begin{enumerate}
			\item We have $a(n) \ge (3-2\sqrt{2})(n-1) -(\sqrt{2}-1)$.
			
			\item There exists a positive real number $C_n$ such that for all sufficiently large $X$,
			\[
			N_n(X)  > C_n X^{a(n)} > C_n X^{(3-2\sqrt{2})(n-1) -(\sqrt{2}-1)}.
			\]
			In particular, when $n \ge 7$ there exists a positive real number $C_n$ such that for all sufficiently large $X$,
			\[
			N_n(X) > C_n X^{9/8}.
			\]
		\end{enumerate}
	\end{thm}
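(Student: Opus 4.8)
The plan is to handle the two parts of the theorem essentially independently: part~(1) is a one‑variable optimization, while part~(2) rests on an explicit family of subrings of $\Z^n$ with prescribed prime‑power index.

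For part~(1), put $m=n-1$ and write the quantity maximized in the definition of $a(n)$ as $f(d)=\frac{d(m-d)+1}{m+d}$. The substitution $u=m+d$ converts this into $g(u)=3m-u-\frac{2m^2-1}{u}$, a strictly concave function of $u$ on $[m,2m]$ whose maximum over the reals occurs at $u_\ast=\sqrt{2m^2-1}\in[m,2m]$, with value $g(u_\ast)=3m-2\sqrt{2m^2-1}\ge(3-2\sqrt2)m$. To return to an integral $d$ I would evaluate $g$ at $u=\lceil u_\ast\rceil\in[m,2m]$ and use the exact identity $g(u_\ast+\delta)=g(u_\ast)-\delta^2/(u_\ast+\delta)$, valid because $u_\ast^2=2m^2-1$, to bound the rounding loss by $1/u_\ast\le 1/m$. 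A short estimate, using $\sqrt2\,m-\sqrt{2m^2-1}=1/(\sqrt2\,m+\sqrt{2m^2-1})$, then gives $g(\lceil u_\ast\rceil)\ge(3-2\sqrt2)m-(\sqrt2-1)$ for all $m\ge1$, which is the claimed inequality for $a(n)$.

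The heart of part~(2) is the following local bound: for every prime $p$ and every integer $d$ with $0\le d\le n-1$, there are at least $p^{\,d(n-1-d)}$ subrings $R\subseteq\Z^n$ with $[\Z^n:R]=p^{\,n-1+d}$. To produce these, let $\mathbf 1=(1,\dots,1)$ and consider the subring $R_0=\Z\mathbf 1+p\Z^n$; it is closed under $\circ$ since $(a\mathbf 1+pv)\circ(b\mathbf 1+pw)=ab\,\mathbf 1+p(aw+bv)+p^2(v\circ w)$, and $[\Z^n:R_0]=p^{\,n-1}$. Every $R$ with $p^2\Z^n\subseteq R\subseteq R_0$ corresponds to a subring $T=R/p^2\Z^n$ of the finite ring $S=R_0/p^2\Z^n$ containing $\mathbf 1$, where $|S|=p^{\,n+1}$. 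Inside $S$ the image $J$ of $p\Z^n$ is a square‑zero ideal with $J\cong\Fp^{\,n}$ as a group, on which $S$ acts through $S/J\cong\Fp$, i.e.\ by scalars, and $S=(\Z/p^2)\mathbf 1+J$. Since $\mathbf 1\in T$, such a $T$ contains the image $\ell\subseteq J$ of $\Z p\mathbf 1$, and one checks that $T\mapsto T\cap J$ is a bijection onto the set of $\Fp$‑subspaces $W$ of $J$ with $\ell\subseteq W$, with inverse $W\mapsto(\Z/p^2)\mathbf 1+W$ and with $[S:T]=p^{\,n-\dim W}$. Taking $\dim W=n-d$ makes $[S:T]=p^{\,d}$, hence $[\Z^n:R]=p^{\,n-1+d}$ for the preimage subring $R$; the number of admissible $W$ is the Gaussian binomial $\binom{n-1}{d}_p$ (subspaces of codimension $d$ in $J/\ell\cong\Fp^{\,n-1}$), and $\binom{n-1}{d}_p\ge p^{\,d(n-1-d)}$ because each of its $d$ factors $\tfrac{p^{\,n-1-d+i}-1}{p^{\,i}-1}$ ($1\le i\le d$) is at least $p^{\,n-1-d}$.

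Granting the local bound, I would fix the optimal $d$ and exploit multiplicativity of $f_n(k):=\#\{R\subseteq\Z^n:[\Z^n:R]=k\}$ (the Euler product for $\zeta^R_{\Z^n}$): for squarefree $N=p_1\cdots p_r$ we have $f_n(N^{\,n-1+d})=\prod_i f_n(p_i^{\,n-1+d})\ge\prod_i p_i^{\,d(n-1-d)}=N^{\,d(n-1-d)}$, so
\[
N_n(X)\ \ge\ \sum_{\substack{N<X^{1/(n-1+d)}\\ N\ \mathrm{squarefree}}} f_n\!\left(N^{\,n-1+d}\right)\ \ge\ \sum_{\substack{N<X^{1/(n-1+d)}\\ N\ \mathrm{squarefree}}} N^{\,d(n-1-d)}\ \gg\ X^{\frac{d(n-1-d)+1}{n-1+d}}=X^{a(n)},
\]
using the elementary estimate $\sum_{N<Y,\ \mathrm{squarefree}}N^{c}\gg Y^{c+1}$ for $c\ge0$. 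Part~(1) then gives $a(n)\ge(3-2\sqrt2)(n-1)-(\sqrt2-1)$, and for $n\ge7$ one checks directly that $a(n)\ge f(2)=\frac{2n-5}{n+1}\ge\frac98$ (equivalent to $7n\ge49$), which yields $N_n(X)>C_nX^{9/8}$. I expect the main obstacle to be the algebraic bookkeeping in the construction — establishing the bijection between subrings of $S$ containing $\mathbf 1$ and $\Fp$‑subspaces of $J$ containing $\ell$, and pinning the index down as $p^{\,n-\dim W}$ — since once that is in place the remaining steps are routine: standard Tauberian‑style summation for the assembly and elementary calculus for the optimization of $f(d)$.
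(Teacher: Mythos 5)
This theorem is quoted in the paper from Isham \cite{ish_subrings} (who builds on Brakenhoff \cite{brakenhoff}); the paper itself offers no proof, so there is nothing internal to compare against. Your argument is correct throughout, and the two halves are handled by the natural methods. Part~(1): with $m=n-1$ and $u=m+d$ the objective becomes $g(u)=3m-u-(2m^2-1)/u$, which is concave with unconstrained maximizer $u_\ast=\sqrt{2m^2-1}\in[m,2m]$; your exact rounding identity $g(u_\ast+\delta)=g(u_\ast)-\delta^2/(u_\ast+\delta)$ (which indeed uses $u_\ast^2=2m^2-1$) combines with the rationalization $\sqrt2\,m-\sqrt{2m^2-1}=1/(\sqrt2\,m+\sqrt{2m^2-1})$ to show the total loss against $(3-2\sqrt2)m$ is $1/\bigl(a(a+b)^2\bigr)$ with $a=\sqrt{2m^2-1}$, $b=\sqrt2\,m$, which is decreasing in $m$ and equals $3-2\sqrt2<\sqrt2-1$ at $m=1$. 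Part~(2): the construction of subrings between $p^2\Z^n$ and $R_0=\Z\mathbf1+p\Z^n$ is exactly the right one, because in the quotient $S=R_0/p^2\Z^n$ the ideal $J=p\Z^n/p^2\Z^n$ squares to zero, so a subring of $S$ containing $\mathbf1$ is nothing more than the additive group $(\Z/p^2)\mathbf1+W$ for a subspace $W$ of $J$ containing $\ell$, with index $p^{n-\dim W}$; the Gaussian binomial count, the bound $\binom{n-1}{d}_p\ge p^{d(n-1-d)}$, multiplicativity of $f_n$ across distinct primes, and the partial-summation estimate $\sum_{N<Y,\ \mathrm{sqfree}}N^c\gg Y^{c+1}$ then give $N_n(X)\gg X^{f(d)}$ for each admissible $d$, and optimizing yields $X^{a(n)}$. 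Your final check $a(n)\ge f(2)=(2n-5)/(n+1)\ge 9/8\iff 7n\ge 49$ is also right. This is, to my knowledge, the same Grassmannian-type lower-bound strategy used by Brakenhoff and Isham, not a genuinely different route.
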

	
	Theorem \ref{proportion_corank_k} (1) for $n \ge 7$ follows from Theorems \ref{Hnk_123} and \ref{ish_lower}. We prove Theorem \ref{proportion_corank_k} (2) and the $n=6$ case of Theorem \ref{proportion_corank_k} (1) at the very end of Section \ref{sec_corank}. 
 Theorem \ref{proportion_corank_k} (3) follows from Theorem \ref{ish_lower} and the following result. 
	\begin{thm}\label{Hnk_upper}
		Let $a(n)$ be defined as in Theorem \ref{ish_lower}.  Suppose $n \ge 7$ and $k \le (6-4\sqrt{2})n + 2\sqrt{2}-\frac{8}{3}$.  Then,
		\[
		\lim_{X \rightarrow \infty} \frac{H_{n,k}(X)}{X^{a(n)}} = 0.
		\]
	\end{thm}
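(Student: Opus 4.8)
The plan is to bound the Dirichlet series
\[
\mathcal H_{n,k}(s)\ =\ \sum_{\substack{R\subseteq\Z^n\ \mathrm{subring}\\ \corank(R)\le k}}[\Z^n\colon R]^{-s},
\]
locating a half-plane $\{\Re(s)>\beta\}$ of absolute convergence with $\beta<a(n)$. Since the coefficients are nonnegative, this already yields the theorem: for any $\beta<\beta'<a(n)$ one has $H_{n,k}(X)\le X^{\beta'}\mathcal H_{n,k}(\beta')$, hence $H_{n,k}(X)=O(X^{\beta'})=o(X^{a(n)})$; only this crude estimate is needed, the finer Theorem \ref{tauberian_theorem} being reserved for the sharper Theorem \ref{Hnk_123}. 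The first point is that the corank condition is compatible with the factorization of a subring over primes: $\corank(R)$ is the largest of the coranks of the $p$-parts of $R$, so $\mathcal H_{n,k}(s)=\prod_p\mathcal H_{n,k,p}(s)$ with $\mathcal H_{n,k,p}(s)=\sum_{e\ge0}h_{n,k}(p^e)p^{-es}$ and $h_{n,k}(p^e)=\#\{R\subseteq\Z^n:[\Z^n\colon R]=p^e,\ \corank(R)\le k\}$. So it suffices to estimate $h_{n,k}(p^e)$ and to check that the Euler product converges for $\Re(s)$ down to some $\beta<a(n)$.

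For the local estimate I would first observe that a subring $R$ of $p$-power index has corank at most $k$ exactly when its image in $\F_p^n$ is an $\F_p$-subalgebra of codimension at most $k$, and that such subalgebras are precisely the ``diagonal'' subalgebras $\bar R_\pi$ attached to set partitions $\pi$ of $\{1,\dots,n\}$, the codimension being the number of merges $\mathrm{mrg}(\pi)=n-|\pi|$. Hence $h_{n,k}(p^e)=\sum_{\pi\colon\mathrm{mrg}(\pi)\le k}\widetilde g_\pi(p^e)$, where $\widetilde g_\pi(p^e)$ counts subrings of index $p^e$ whose mod-$p$ image equals $\bar R_\pi$. Since $R\otimes\Z_p$ is a finite $\Z_p$-order in $\Q_p^n$, hence $p$-adically complete and reduced, idempotents of its reduction lift and $R\otimes\Z_p$ splits as a product indexed by the blocks of $\pi$: a block $B$ contributes an order $L_B$ in $\Q_p^{|B|}$ that is ``maximally non-split'' modulo $p$ (its mod-$p$ image is the diagonal line), with $|\Z_p^{|B|}/L_B|$ carrying the corresponding part of the index, and blocks of size one forcing $L_B=\Z_p$. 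Counting these block by block amounts to controlling the ``diagonal part'' of the subring zeta function of $\Z^{|B|}$, and assembling them uses the description of Hermite normal forms of subrings in Section \ref{sec:counting}. The upshot is that $\widetilde G_{\pi,p}(s)=\sum_e\widetilde g_\pi(p^e)p^{-es}$ is a rational function of $p^{-s}$ whose poles in $p^{-s}$ lie among finitely many factors $(1-p^{a}p^{-bs})^{-1}$, so $\pi$ contributes to the Euler product the abscissa $\max\bigl(\tfrac1{\mathrm{mrg}(\pi)},\,\max_{(a,b)}\tfrac{a+1}{b}\bigr)$, and one may take $\beta$ to be the maximum of these over all $\pi$ with $\mathrm{mrg}(\pi)\le k$. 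The one-merge partitions already produce the value $1$, and $a(n)>1$ for $n\ge7$ by Theorem \ref{ish_lower}, so what must be shown is that $\max_{(a,b)}\tfrac{a+1}{b}<a(n)$ over all exponent pairs $(a,b)$ arising from partitions with at most $k$ merges.

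The final step is this comparison. The Hermite normal form analysis of Section \ref{sec:counting} is what pins down the exponent pairs $(a,b)$ that occur: up to the constraint that the reduction have codimension at most $k$, they come from exactly the sort of optimization---splitting the active coordinates into two groups with a rectangular block of free off-diagonal entries between them---that produces $a(n)=\max_{0\le d\le n-1}\tfrac{d(n-1-d)+1}{n-1+d}$ in Theorem \ref{ish_lower}. Carrying out this constrained optimization and invoking the lower bound $a(n)\ge(3-2\sqrt2)(n-1)-(\sqrt2-1)$ of Theorem \ref{ish_lower}(1), one finds that $\max_{(a,b)}\tfrac{a+1}{b}<a(n)$ precisely when $k\le(6-4\sqrt2)n+2\sqrt2-\tfrac83$, which completes the argument.

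The main obstacle is the middle step: turning ``$\corank\le k$'' into a workable restriction on the Hermite normal forms (equivalently, on the cones appearing in a du Sautoy--Grunewald style $p$-adic integral, cf. \cite{dusautoy_grunewald,ish_subrings}), and from it extracting a clean list of the surviving exponent pairs $(a,b)$. The governing heuristic is that the subrings responsible for the dominant power $X^{a(n)}$ in Isham's lower bound for $N_n(X)$ are exactly those of large corank, so imposing $\corank\le k$ deletes precisely these; one must quantify the loss sharply enough that the threshold $k\le(6-4\sqrt2)n+2\sqrt2-\tfrac83$ drops out of the optimization rather than something weaker. Once the surviving exponent pairs are identified the optimization itself is elementary.
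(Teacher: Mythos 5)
Your framework is sound: you correctly reduce to an Euler product, correctly identify that $\corank(R) \le k$ for a $p$-power index subring is equivalent to the reduction $\bar R \subseteq \F_p^n$ having codimension at most $k$, correctly note that unital subalgebras of $\F_p^n$ are partition subalgebras, and correctly observe that the crude estimate $H_{n,k}(X) \le X^{\beta'}\mathcal H_{n,k}(\beta')$ suffices (no Tauberian theorem needed for an $o(X^{a(n)})$ bound). The block decomposition you invoke is essentially Liu's decomposition into irreducible subrings, which the paper also uses. Up to this point you are re-deriving, in slightly different language, the setup behind Theorem \ref{general_upper} and Proposition \ref{prop:subring_matrix_divis}.

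The gap is exactly where you flag it: the passage from the block decomposition to a determination of the exponent pairs $(a,b)$ and then to the threshold $k \le (6-4\sqrt2)n + 2\sqrt2 - \tfrac83$ is asserted, not proved, and it is not a routine optimization. The poles of the local factors $\widetilde G_{\pi,p}(s)$ are not known once the largest block has size $\ge 5$; extracting a "clean list of exponent pairs" would require essentially redoing the analysis of Kaplan--Marcinek--Takloo-Bighash. The paper sidesteps this entirely. It proves (Theorem \ref{general_upper}) the elementary bound $\tilde h_{n,k}(p^e) \le k(n-1)^k 2^{n-1} f_{k+1}(p^e)$ — the count of corank-$\le k$ subrings of $\Z^n$ of index $p^e$ is bounded by a constant times the count of \emph{all} subrings of $\Z^{k+1}$ of index $p^e$, because the "active" $(k{+}1)\times(k{+}1)$ irreducible block lives in $\Z^{k+1}$ and there are only boundedly many ways to place the remaining $1$'s. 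This forces the abscissa of $\zeta_{\Z^n}^{R,(k)}(s)$ to be at most that of $\zeta_{\Z^{k+1}}^R(s)$, and then the nontrivial analytic input is imported from Theorem \ref{KMTB_upper}: $N_{k+1}(X) \ll X^{k/2 - 2/3 + \epsilon}$. That $k/2 - 2/3$ is precisely where the $-\tfrac83$ in your threshold comes from, and it is not something your proposed optimization would produce without recapitulating that theorem. Comparing $k/2 - 2/3 < (3-2\sqrt2)(n-1) - (\sqrt2-1)$ and solving for $k$ is then the whole proof. You should identify the reduction $\tilde h_{n,k}(p^e) \ll_{n,k} f_{k+1}(p^e)$ explicitly rather than trying to classify exponent pairs directly, and you should cite the $N_{k+1}(X)$ bound rather than attempt to rederive it.
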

\noindent In order to prove this theorem, we give an upper bound on the growth of $H_{n,k}(X)$.  We do this in Corollary \ref{Hnk_upper_cor}.  
	
	\section{The corank at most $k$ zeta function of $\Z^n$}\label{sec_corank}
	
	In order to prove Theorems \ref{proportion_corank_k}, \ref{Hnk_123}, and \ref{Hnk_upper} we introduce the \emph{corank at most $k$ zeta function of $\Z^n$} and study its analytic properties.
    
	\begin{defn}\label{defn:corank_at_most}
		Let $\tilde{h}_{n,k}(j)$ be the number of subrings $R \subseteq \Z^n$ that have corank at most $k$ and index $j$ and ${h}_{n,k}(j)$ be the number of these subrings that have corank exactly $k$.  Clearly, $$\tilde{h}_{n,k}(j) = \sum_{i = 0}^k {h}_{n,i}(j)\; \textrm{   and   } \; H_{n,k}(X) = \sum_{j= 1}^{\lfloor X\rfloor} \tilde{h}_{n,k}(j).$$
		
		We define the \emph{corank at most $k$ zeta function of $\Z^n$} by
		\[
		\zeta_{\Z^n}^{R,(k)}(s) = \sum_{j = 1}^\infty \tilde{h}_{n,k}(j) j^{-s}.
		\]
	\end{defn}
	
    A finite abelian group has rank at most $k$ if and only if each of its Sylow $p$-subgroups has rank at most $k$. For the same reason that the subring zeta function $\zeta_{\Z^n}^R(s)$ has an Euler product, we see that $\zeta_{\Z^n}^{R,(k)}(s)$ has an Euler product as well.  We have
	\[
	\zeta_{\Z^n}^{R,(k)}(s) = \prod_p \zeta_{\Z^n, p}^{R,(k)}(s),\ \text{ where }\ \ \ \zeta_{\Z^n, p}^{R,(k)}(s) = \sum_{e =0}^\infty \tilde{h}_{n,k}(p^e)p^{-es}.
	\]
	
 In order to prove Theorem \ref{Hnk_123}, we find the right-most pole of $\zeta_{\Z^n}^{R, (k)}(s)$ for $k \le 3$ and then apply Theorem \ref{tauberian_theorem}. In this section, we prove Theorem \ref{Hnk_123}, first for $k=1$, then $k=2$, and finally for $k=3$.

	\begin{prop}\label{count_cocyclic}
		Suppose $n \ge 2$ and $e \ge 1$.  Then $h_{n,1}(p^e) = \binom{n}{2}$.
	\end{prop}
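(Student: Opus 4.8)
The plan is to parametrize corank-$1$ index-$p^e$ subrings by linear forms modulo $p^e$ and then pin down exactly which forms are admissible. Since $e \ge 1$, a subring $R \subseteq \Z^n$ of index $p^e$ has corank exactly $1$ if and only if $\Z^n/R \cong \Z/p^e\Z$; choosing an isomorphism, $R = \ker\phi$ for a surjective homomorphism $\phi\colon \Z^n \to \Z/p^e\Z$, which is determined by the tuple $c = (c_1,\dots,c_n) \in (\Z/p^e\Z)^n$ with $\phi(x) \equiv \sum_i c_i x_i$. Surjectivity of $\phi$ is equivalent to some $c_i$ being a unit, and two tuples give the same kernel if and only if they differ by a unit scalar. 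So it suffices to count, up to unit scaling, tuples $c$ with some $c_i$ a unit such that $R = \ker\phi$ contains $(1,\dots,1)$ and is closed under $\circ$.

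First I would record the two ring conditions as conditions on $c$. Containing $(1,\dots,1)$ is just $\sum_i c_i \equiv 0 \pmod{p^e}$. For closure under $\circ$: fix $x \in R$ and consider $\psi_x\colon \Z^n \to \Z/p^e\Z$, $\psi_x(y) = \phi(x\circ y) = \sum_i c_i x_i y_i$. Closure under $\circ$ says $\psi_x$ vanishes on $\ker\phi$ for every $x \in \ker\phi$; since $\phi$ is surjective, $\ker\phi \subseteq \ker\psi_x$ forces $\psi_x$ to factor through $\phi$, so $\psi_x = \lambda_x\phi$ for some $\lambda_x \in \Z/p^e\Z$. Evaluating at standard basis vectors, this becomes: for every $x \in \ker\phi$ there is $\lambda_x$ with $c_i(x_i - \lambda_x) \equiv 0 \pmod{p^e}$ for all $i$.

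Next I would analyze the $p$-adic valuations $v_p(c_i)$. Write $U = \{i : c_i \text{ is a unit}\}$, which is nonempty. No $c_j$ can satisfy $0 < v_p(c_j) < e$: picking $\ell \in U$ and the element $x \in \ker\phi$ supported on $\{\ell,j\}$ with $x_j = 1$ and $x_\ell \equiv -c_\ell^{-1}c_j \pmod{p^e}$, the relations $c_\ell(x_\ell - \lambda_x) \equiv 0$ and $c_j(x_j - \lambda_x) \equiv 0 \pmod{p^e}$ force $\lambda_x \equiv 0 \pmod p$ and $\lambda_x \equiv 1 \pmod p$, a contradiction. So each $c_i$ is a unit or $\equiv 0 \pmod{p^e}$, and admissibility reduces to $\ker\phi \subseteq \{x : x_i \equiv x_j \pmod{p^e} \text{ for all } i,j\in U\}$; comparing the indices $p^e$ and $p^{e(|U|-1)}$ of these two groups gives $|U| \le 2$. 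Also $|U| = 1$ is impossible, since then all other $c_i \equiv 0 \pmod{p^e}$ and $\sum_i c_i \equiv 0$ would make the unique unit $c_i \equiv 0$. Hence $|U| = 2$, say $U = \{i,j\}$; then $\sum_k c_k \equiv 0$ gives $c_j \equiv -c_i \pmod{p^e}$, and scaling by $c_i^{-1}$ yields $R = \{x \in \Z^n : x_i \equiv x_j \pmod{p^e}\}$. Conversely, for each $2$-element subset $\{i,j\} \subseteq \{1,\dots,n\}$ this $R$ is a subring of index $p^e$ with $\Z^n/R \cong \Z/p^e\Z$, hence of corank exactly $1$; distinct subsets yield distinct subrings since the subset equals $U$, which is determined by $R$. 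Thus $R \mapsto U$ is a bijection onto the $2$-element subsets of $\{1,\dots,n\}$, and $h_{n,1}(p^e) = \binom{n}{2}$.

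The main obstacle is the valuation step in the third paragraph: the reformulation of multiplicative closure --- that $(c_i x_i)_i$ is a scalar multiple of $(c_i)_i$ modulo $p^e$ for every $x \in \ker\phi$ --- is clean, but eliminating intermediate valuations and then bounding $|U|$ requires choosing the test elements $x\in\ker\phi$ with care; everything else is routine once that is in place.
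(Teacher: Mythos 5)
Your proof is correct, and it takes a genuinely different route from the paper's. The paper works inside the Hermite normal form framework: Theorem \ref{corank_diag_thm} forces a cocyclic subring matrix of determinant $p^e$ to have exactly one diagonal entry equal to $p^e$ (say in position $i$) with all other diagonal entries equal to $1$, and then a direct analysis of the $i$-th row using multiplicative closure and the presence of $(1,\ldots,1)^T$ in the column span shows that exactly one off-diagonal entry of that row equals $1$ while the rest vanish, giving $n-i$ matrices for each $i$; summing yields $\binom{n}{2}$. You instead parametrize cocyclic sublattices of index $p^e$ as kernels of surjective $\phi\colon \Z^n \to \Z/p^e\Z$, reformulate multiplicative closure as the factorization $\psi_x=\lambda_x\phi$ for every $x\in\ker\phi$, and use a valuation argument together with an index comparison to force the set $U$ of unit coordinates of $\phi$ to have exactly two elements. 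Both arguments ultimately identify the same $\binom{n}{2}$ subrings $\{x\colon x_i\equiv x_j \pmod{p^e}\}$ (these are exactly the column spans of the HNF matrices the paper finds), but your version is self-contained --- it bypasses the HNF machinery and Theorem \ref{corank_diag_thm} entirely --- and it has the pleasant feature of producing an explicit description of the cocyclic subrings from the start. The tradeoff is that the paper's matrix-based method is the one that generalizes cleanly to the corank $2$ and $3$ counts in Theorems \ref{corank2_upper} and \ref{corank3_upper}, and to the general bound of Theorem \ref{general_upper}, which is why the authors set up the HNF framework here even though it is heavier than necessary for the cocyclic case alone.
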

	\noindent We prove this result in Section \ref{subring_matrices}.  We note that this is closely related to a result of Brakenhoff \cite[Theorem 1.5]{brakenhoff}.  
 
 Proposition \ref{count_cocyclic} leads directly to the following expression for the corank at most $1$ zeta function of $\Z^n$.
	\begin{prop}\label{cocyclic_zeta}
		Suppose $n \ge 2$.  We have 
		\[
		\zeta_{\Z^n}^{R,(1)}(s)  = \zeta(s) \prod_p \left(1 + \left(\binom{n}{2} - 1\right)p^{-s}\right).
		\]
	\end{prop}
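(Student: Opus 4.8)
The plan is to compute the local Euler factor $\zeta_{\Z^n,p}^{R,(1)}(s)$ at each prime directly from Proposition \ref{count_cocyclic}, and then assemble the global zeta function using the Euler product $\zeta_{\Z^n}^{R,(1)}(s) = \prod_p \zeta_{\Z^n,p}^{R,(1)}(s)$ recorded above. First I would pin down the corank-$0$ contribution: a subring $R \subseteq \Z^n$ has corank $0$ exactly when $\Z^n/R$ is trivial, i.e.\ when $R = \Z^n$. Hence $h_{n,0}(1) = 1$ and $h_{n,0}(p^e) = 0$ for all $e \ge 1$. Combining this with the identity $\tilde{h}_{n,1}(j) = h_{n,0}(j) + h_{n,1}(j)$ from Definition \ref{defn:corank_at_most} and with Proposition \ref{count_cocyclic}, I get $\tilde{h}_{n,1}(1) = 1$ and $\tilde{h}_{n,1}(p^e) = \binom{n}{2}$ for every prime $p$ and every $e \ge 1$.

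Next I would sum each local series as a geometric series. Since the coefficients on prime powers of positive exponent are the constant $\binom{n}{2}$,
\[
\zeta_{\Z^n,p}^{R,(1)}(s) = 1 + \binom{n}{2}\sum_{e \ge 1} p^{-es} = 1 + \binom{n}{2}\,\frac{p^{-s}}{1 - p^{-s}} = \frac{1 + \left(\binom{n}{2} - 1\right)p^{-s}}{1 - p^{-s}},
\]
the manipulation being valid for $\Re(s)$ in a right half-plane of convergence. Taking the product over all primes and using the Euler product $\zeta(s) = \prod_p (1 - p^{-s})^{-1}$ then gives
\[
\zeta_{\Z^n}^{R,(1)}(s) = \prod_p \frac{1 + \left(\binom{n}{2} - 1\right)p^{-s}}{1 - p^{-s}} = \zeta(s)\prod_p\left(1 + \left(\binom{n}{2} - 1\right)p^{-s}\right),
\]
which is exactly the asserted formula. (The rearrangement into the product of $\zeta(s)$ with $\prod_p(1 + (\binom{n}{2}-1)p^{-s})$ is legitimate on $\Re(s) > 1$, where both factors converge absolutely.)

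There is no real obstacle in this step: the entire substantive content has been offloaded to Proposition \ref{count_cocyclic}, whose proof is deferred to Section \ref{subring_matrices}, and to the fact that $\zeta_{\Z^n}^{R,(1)}(s)$ has an Euler product, which was explained just above the statement. The only points that genuinely need care are the bookkeeping observation that the unique corank-$0$ subring is $\Z^n$ itself — so that the constant term of each Euler factor is $1$ rather than something larger — and the elementary geometric-series simplification that produces the factor $1 + (\binom{n}{2}-1)p^{-s}$ (note $\binom{n}{2} - 1 \ge 0$ for $n \ge 2$, with equality when $n = 2$, consistent with $\zeta_{\Z^2}^{R}(s) = \zeta(s)$ from Theorem \ref{GeneratingFunctions}).
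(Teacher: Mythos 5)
Your proof is correct and follows the same route the paper intends: the paper gives no explicit proof of this proposition, remarking only that Proposition \ref{count_cocyclic} ``leads directly to'' it, and your computation of each local factor from $\tilde{h}_{n,1}(1)=1$, $\tilde{h}_{n,1}(p^e)=\binom{n}{2}$ for $e\ge 1$, followed by the geometric-series simplification and Euler product, is exactly that direct derivation. The side observations (the unique corank-$0$ subring being $\Z^n$, and the $n=2$ consistency check against $\zeta_{\Z^2}^R(s)=\zeta(s)$) are accurate and appropriate.
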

 
The $k=1$ case of Theorem \ref{Hnk_123} follows directly from Proposition \ref{cocyclic_zeta}. 
The following is an instance of the Factorization Method discussed in the survey of Alberts \cite{alberts}.

 \begin{cor} \label{cor:cocyclic_asymptotic}
     Let $m = \binom{n}{2}$. We have
     \[
     H_{n,1}(X) \sim \frac{1}{(m-1)!} \prod_p (p^{-m} (p-1)^{m-1} (p+(m-1)) X (\log X)^{m-1}.
     \]
 \end{cor}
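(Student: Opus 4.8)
The plan is to apply the Tauberian theorem (Theorem \ref{tauberian_theorem}) to the Dirichlet series $\zeta_{\Z^n}^{R,(1)}(s)$, whose explicit form is given by Proposition \ref{cocyclic_zeta}. Write $m = \binom{n}{2}$, so that
\[
\zeta_{\Z^n}^{R,(1)}(s) = \zeta(s) \prod_p \bigl(1 + (m-1)p^{-s}\bigr).
\]
First I would factor out the polar part. Recall that $\prod_p (1-p^{-s})^{-1} = \zeta(s)$, so I would write $1 + (m-1)p^{-s} = (1-p^{-s})^{-(m-1)} \cdot g_p(s)$ where $g_p(s) = (1-p^{-s})^{m-1}(1 + (m-1)p^{-s})$. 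This gives
\[
\zeta_{\Z^n}^{R,(1)}(s) = \zeta(s)^m \prod_p g_p(s).
\]
The key analytic point is that $\prod_p g_p(s)$ converges absolutely and is analytic in a half-plane $\Re(s) > \tfrac12$: expanding $g_p(s) = (1 + (m-1)p^{-s})(1 - (m-1)p^{-s} + O(p^{-2s})) = 1 + O(p^{-2s})$, the product over $p$ of $g_p(s)$ converges for $\Re(s) > \tfrac12$ and in particular is analytic and nonzero at $s=1$.

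Next I would identify the pole data needed for the Tauberian theorem. The series converges for $\Re(s) > 1$ (since the coefficients $\tilde h_{n,1}(p^e)$ grow polynomially and $\zeta(s)^m$ converges there, while the $g_p$ product only improves convergence). On the line $\Re(s) = 1$ the only singularity is at $s = 1$, where $\zeta(s)^m$ contributes a pole of order $\beta = m$. Write $\zeta(s)^m = \bigl(s-1\bigr)^{-m}\bigl((s-1)\zeta(s)\bigr)^m$, and since $(s-1)\zeta(s) \to 1$ as $s \to 1$, we get $\zeta_{\Z^n}^{R,(1)}(s) = G(s)/(s-1)^m$ near $s=1$, with
\[
G(s) = \bigl((s-1)\zeta(s)\bigr)^m \prod_p g_p(s), \qquad G(1) = \prod_p g_p(1) = \prod_p (1-p^{-1})^{m-1}\bigl(1 + (m-1)p^{-1}\bigr).
\]
Simplifying $g_p(1) = (1 - p^{-1})^{m-1}(1 + (m-1)p^{-1}) = p^{-m}(p-1)^{m-1}(p + (m-1))$ gives exactly the Euler factor in the statement, and $G(1) \ne 0$ since each factor is positive. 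Then Theorem \ref{tauberian_theorem} with $\alpha = 1$ and $\beta = m$ yields
\[
H_{n,1}(X) = \sum_{j \le X} \tilde h_{n,1}(j) \sim \frac{G(1)}{1 \cdot \Gamma(m)} X (\log X)^{m-1} = \frac{1}{(m-1)!} \prod_p p^{-m}(p-1)^{m-1}(p+(m-1))\, X(\log X)^{m-1},
\]
which is the claimed asymptotic.

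The only genuine obstacle is verifying the hypotheses of the Tauberian theorem cleanly: that $\prod_p g_p(s)$ extends to an analytic nonvanishing function on a neighborhood of the closed half-plane $\Re(s) \ge 1$ (so that $\zeta_{\Z^n}^{R,(1)}(s)$ is analytic on $\Re(s) = 1$ except at $s=1$, condition (1)), and that the local factorization near $s=1$ has the stated shape with $G(1) \ne 0$ (condition (2)). Both follow from the $g_p(s) = 1 + O(p^{-2s})$ estimate together with the standard analytic continuation and nonvanishing of $\zeta(s)$ on $\Re(s) = 1$; the Tauberian input then applies verbatim. Everything else is the routine bookkeeping of collecting Euler factors, and I would note that this is precisely the Factorization Method of Alberts \cite{alberts} referenced just before the statement. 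I would also remark that nonnegativity of the coefficients, required by Theorem \ref{tauberian_theorem}, is automatic since $\tilde h_{n,1}(j)$ counts subrings.
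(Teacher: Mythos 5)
Your proof is correct and takes essentially the same approach as the paper: factor out $\zeta(s)^m$ from the Euler product, verify that the remaining local factors $g_p(s) = 1 + O(p^{-2s})$ give an Euler product analytic and nonvanishing near $\Re(s)=1$, and apply Delange's Tauberian theorem with $\alpha=1$, $\beta=m$. The paper establishes the pole order $m$ by the same factorization (multiplying by $(1-p^{-s})^t$ and checking when the pole disappears) and then computes the leading constant exactly as you do by evaluating the analytic remainder at $s=1$.
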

  \noindent Note that for $n = 2$ this is consistent with the result from Corollary \ref{Nn_growth} that $N_2(X) \sim X$.
  
 \begin{proof}
   The right-most pole of $\zeta_{\Z^n}^{R,(1)}(s)$ is located at $s = 1$. We will prove that the pole has order $\binom{n}{2}$. 
  Multiplying the zeta function by $\frac{1}{\zeta(s)^t}$ for some integer $t > 0$, observe that
  \begin{align*}
    \zeta_{\Z^n}^{R,(1)}(s) \prod_p (1-p^{-s})^{t}&=\prod_p \left(\left(1 + \left(\binom{n}{2} - 1\right)p^{-s}\right)(1-p^{-s})^{t-1}\right)\\
    &=\prod_p\left(\left(1 + \left(\binom{n}{2} - 1\right)p^{-s}\right) \sum_{k=0}^{t-1} (-1)^k\binom{t-1}{k} p^{-ks}\right).
  \end{align*} 
  From here it is clear that $\zeta_{\Z^n}^{R, (1)}(s) \prod_p (1-p^{-s})^{t}$ still has a pole at $s=1$ when $t < \binom{n}{2}$, and does not have a pole at $s=1$ when $t = \binom{n}{2}.$
  
  We now apply Theorem \ref{tauberian_theorem}.
  To find $C_{n,1}$, we evaluate $\zeta(s)^{\binom{n}{2}} \zeta_{\Z^n}^{R,(1)}(s)$ at $s=1$ and simplify.
 \end{proof}
 We will apply this result in Section \ref{corank_smalln} when we study the proportion of subrings of $\Z^n$ with small corank for $n \le 5$.

	In order to state the analogous results for subrings of corank $2$ and corank $3$ we first recall some material due to Liu about irreducible subrings of $\Z^n$ \cite{Liu}.  Liu defines irreducible subrings of $\Z_p^n$, but for notational convenience we prefer to define the corresponding notion for subrings of $\Z^n$ with index equal to a power of $p$.
	\begin{defn}
		A subring $R \subseteq \Z^n$ with index equal to a power of $p$ is \emph{irreducible} if for each $(x_1, \ldots, x_n) \in R,\ x_1\equiv x_2\equiv \cdots \equiv x_n \pmod{p}$.
	\end{defn}
	
	The motivation for this definition comes from the following decomposition result of Liu.
	\begin{thm}\cite[Theorem 3.4]{Liu}
		A subring $R \subseteq \Z^n$ with index equal to a power of $p$ can be written uniquely as a direct sum of irreducible subrings.
	\end{thm}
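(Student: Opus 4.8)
The plan is to read off the decomposition from the idempotents of $R$ and then identify the indecomposable pieces with Liu's irreducible subrings by reducing modulo $p$. Throughout, for $S\subseteq\{1,\dots,n\}$ write $\mathbf{1}_S\in\Z^n$ for the vector whose $i$-th coordinate is $1$ if $i\in S$ and $0$ otherwise, and write $\Z^S$ for the subgroup of vectors supported on $S$.

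\textbf{Step 1 (idempotents give the decomposition).} First I would observe that, since $\Z^n$ has no nonzero nilpotents, any $x\in R$ with $x\circ x=x$ is a $0/1$ vector, hence equals $\mathbf{1}_S$ for some $S$. The set $\mathcal{B}=\{S:\mathbf{1}_S\in R\}$ contains $\{1,\dots,n\}$ (as $(1,\dots,1)\in R$), is closed under complement ($\mathbf{1}_{S^c}=(1,\dots,1)-\mathbf{1}_S$) and under intersection ($\mathbf{1}_{S\cap T}=\mathbf{1}_S\circ\mathbf{1}_T$), so it is a Boolean subalgebra of the power set of $\{1,\dots,n\}$; let $B_1,\dots,B_r$ be its atoms, so $\{B_j\}$ is a partition of $\{1,\dots,n\}$. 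For any $x\in R$ the vector $\mathbf{1}_{B_j}\circ x\in R$ is the restriction of $x$ to the coordinates in $B_j$, and summing over $j$ gives $x=\sum_j\mathbf{1}_{B_j}\circ x$. Hence
\[
R=\bigoplus_{j=1}^r R_j,\qquad R_j:=R\cap\Z^{B_j},
\]
each $R_j$ being a subring of $\Z^{B_j}$ with identity $\mathbf{1}_{B_j}$ and with $[\Z^{B_j}:R_j]$ a power of $p$ (these indices multiply to $[\Z^n:R]$). Moreover $R_j$ has no idempotent besides $0$ and $\mathbf{1}_{B_j}$: an idempotent $\mathbf{1}_T$ with $T\subseteq B_j$ lies in $R$, hence is a union of atoms of $\mathcal{B}$, forcing $T\in\{\emptyset,B_j\}$.

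\textbf{Step 2 (indecomposable $\iff$ irreducible).} The crux is to show $R_j$ is irreducible in Liu's sense exactly when it has no nontrivial idempotent. If $R_j$ contains $\mathbf{1}_T$ with $\emptyset\ne T\subsetneq B_j$, then $\mathbf{1}_T$ has a coordinate equal to $1$ and one equal to $0$, so $R_j$ is not irreducible. Conversely, suppose $R_j$ is not irreducible; then the image $\overline{R_j}$ of $R_j$ in $\F_p^{B_j}$ is strictly larger than the diagonal $\F_p\cdot\mathbf{1}_{B_j}$. I would use the fact that every $\F_p$-subalgebra $A$ of $\F_p^{B_j}$ is the $\F_p$-span of the indicator vectors of the blocks of a partition of $B_j$: $A$ is a finite reduced commutative $\F_p$-algebra, hence a product of finite fields, and for a primitive idempotent $\mathbf{1}_E$ of $A$ the factor $\mathbf{1}_E\circ A$ is a field sitting inside $\F_p^E$, which forces it to equal $\F_p$ because composing with a coordinate projection would give an $\F_p$-algebra homomorphism $\F_q\to\F_p$, impossible for $q>p$. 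Applying this to $\overline{R_j}$, whose associated partition has at least two blocks, produces a nontrivial idempotent $\mathbf{1}_E\bmod p\in\overline{R_j}$ with $\emptyset\ne E\subsetneq B_j$; choose $y\in R_j$ with $y\equiv\mathbf{1}_E\pmod p$. Passing to $R_j\otimes\Z_p\subseteq\Z_p^{B_j}$, which is closed since it has finite index, the limit $\lim_{N\to\infty}y^{p^N}$ (the $p^N$-th power computed coordinatewise) exists and equals $\mathbf{1}_E$, because a coordinate $\equiv 0\pmod p$ has $p^N$-th powers tending to $0$ and a coordinate $\equiv 1\pmod p$ has $p^N$-th powers tending to $1$. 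Since $R_j$ has $p$-power index in $\Z^{B_j}$ one checks $R_j=(R_j\otimes\Z_p)\cap\Z^{B_j}$, so $\mathbf{1}_E\in R_j$, a nontrivial idempotent; this contradiction shows $R_j$ is irreducible. Hence $R=\bigoplus_j R_j$ is a direct sum of irreducible subrings.

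\textbf{Step 3 (uniqueness) and the main obstacle.} For uniqueness, if $R=\bigoplus_l R'_l$ with each $R'_l$ an irreducible subring supported on a block $D_l$ of some partition of $\{1,\dots,n\}$, then each $\mathbf{1}_{D_l}\in R$ is an idempotent, so $D_l$ is a union of atoms of $\mathcal{B}$; if $D_l$ contained at least two atoms, the indicator of one of them would be a nontrivial idempotent of $R'_l=R\cap\Z^{D_l}$, contradicting irreducibility. So $\{D_l\}=\{B_j\}$ and the decomposition coincides with the one built in Step 1. I expect the main obstacle to be the converse implication in Step 2 --- upgrading a nontrivial idempotent of the mod-$p$ reduction $\overline{R_j}$ to an honest idempotent of $R_j$ --- which is precisely what the classification of subalgebras of $\F_p^{B_j}$ together with the $p$-adic limit of coordinatewise powers is designed to handle.
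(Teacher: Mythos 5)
Your proof is correct. Since the paper only cites Liu for this statement and does not reprove it, I'll evaluate your argument on its own terms and then remark on how it likely relates to Liu's.

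All three steps hold up. Step 1 is the standard idempotent decomposition: idempotents in $R$ are $0/1$ vectors, their supports form a Boolean subalgebra, the atoms partition $[n]$, and multiplying by $\mathbf{1}_{B_j}$ gives the direct sum $R=\bigoplus_j R_j$ with $R_j=R\cap\Z^{B_j}$ a subring of $\Z^{B_j}$ having no nontrivial idempotent. Step 2's key content is the equivalence ``no nontrivial idempotent $\iff$ irreducible in Liu's sense,'' and the nontrivial direction is handled correctly: the classification of unital $\F_p$-subalgebras of $\F_p^m$ as spans of partition indicators (primitive idempotent factors must be $\F_p$ because a coordinate projection gives an $\F_p$-algebra map from a finite field to $\F_p$) produces a nontrivial idempotent mod $p$, and the $p$-adic limit $\lim_N y^{p^N}$ lifts it to an honest idempotent; the identification $(R_j\otimes\Z_p)\cap\Z^{B_j}=R_j$ is exactly where the hypothesis that $[\Z^{B_j}:R_j]$ is a $p$-power is used (it fails if the index has a prime factor $q\ne p$), and you invoke it appropriately. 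Step 3's uniqueness argument is clean.

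Your route is genuinely different in flavor from the one Liu's surrounding results suggest. Liu works with Hermite normal form subring matrices; Proposition 3.1 characterizes irreducibility via the columns, and Lemma 3.5 (quoted as Proposition \ref{liulastcol} in this paper) shows the last column is a $0/1$ vector with a compatibility condition $a_{ij}=a_{ji}=0$ when $a_{in}=1$, $a_{jn}=0$ --- precisely the kind of statement one would use to split the matrix into blocks and prove the decomposition combinatorially. Your proof instead reads the decomposition off the Boolean algebra of idempotents and upgrades a mod-$p$ idempotent to a true one via Teichm\"uller-style $p$-adic lifting. The matrix approach ties directly into the computational framework used throughout the paper (Hermite normal form, counting), while your approach is coordinate-free, emphasizes why the $p$-power hypothesis is essential, and generalizes immediately to sublattices of $\Z_p^n$ that are closed under the Hadamard product. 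Both are valid; yours is a nice conceptual alternative.
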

	Liu uses this result to prove a recurrence for subrings of $\Z^n$ in terms of irreducible subrings of $\Z^n$ and subrings of $\Z^j$ for $j< n$ \cite[Proposition 2.8]{Liu}.  Let $g_n(k)$ be the number of irreducible subrings of $\Z^n$ of index $k$.  We note that Liu uses slightly different notation for counting irreducible subrings.  Our $g_n(k)$ is denoted by $g_{n-1}(k)$ in \cite{Liu}.
	
	We now state the analogues of Proposition \ref{count_cocyclic} for subrings of corank $2$ and $3$.
	\begin{thm}\label{corank2_upper}
		Suppose $e \ge 2$ and $n \ge 3$. Then
		\[
		h_{n, 2}(p^e) =\frac{(3n^2-17n+36)}{12} \binom{n-1}{2} g_3(p^e) + 3\binom{n-1}{3}(e-1).
		\]
	\end{thm}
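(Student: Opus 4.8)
The plan is to use Liu's irreducible decomposition to convert ``$R$ has corank exactly $2$'' into a combinatorial condition, and then count. The first structural step is to relate corank to the decomposition: if $R \subseteq \Z^n$ has $p$-power index and $R = R_1 \oplus \cdots \oplus R_\ell$ is its Liu decomposition with $R_i \subseteq \Z^{n_i}$ irreducible, then $\Z^n/R \cong \bigoplus_i (\Z^{n_i}/R_i)$ is a direct sum of $p$-groups, so $\corank(R) = \sum_i \corank(\Z^{n_i}/R_i)$. I would then show that an irreducible subring of $\Z^m$ has corank exactly $m-1$: it is contained in $\{x\in\Z^m : x_1\equiv\cdots\equiv x_m \bmod p\}$, whose cokernel is $(\Z/p)^{m-1}$, so the corank is at least $m-1$; and $(1,\dots,1)\in R_i$ is not in $p\Z^m$, so the $p$-rank of the cokernel is strictly less than $m$. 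Hence $\corank(R) = \sum_i(n_i-1) = n-\ell$, and $R$ has corank exactly $2$ precisely when it has $n-2$ irreducible summands, i.e.\ when the multiset of block sizes is $\{3,1,\dots,1\}$ or $\{2,2,1,\dots,1\}$.

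For the count I would use Liu's recursion, conditioning on the irreducible block $B$ containing a fixed coordinate; this is what produces binomial coefficients in $n-1$. Writing $|B|=j$, the block $R_B$ contributes corank $j-1$, so the complementary subring of $\Z^{n-j}$ must have corank $2-(j-1)$, forcing $j\in\{1,2,3\}$. The case $j=1$ reproduces the count on $\Z^{n-1}$ and contributes $h_{n-1,2}(p^e)$. The case $j=2$ contributes $n-1$ choices of $B$, a corank-exactly-$1$ subring of $\Z^{n-2}$ on the remaining coordinates (counted by Proposition~\ref{count_cocyclic} as $\binom{n-2}{2}$ for each admissible index), and a unique irreducible subring of $\Z^2$ of each positive $p$-power index (since $\zeta_{\Z^2}^R(s)=\zeta(s)$ forces $f_2(p^a)=1$ and every index-$p^a$ subring with $a\ge1$ is irreducible); summing over the splittings of the exponent gives a term of the form $(n-1)(e-1)\binom{n-2}{2}$. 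The case $j=3$ contributes $\binom{n-1}{2}$ choices of $B$ and $g_3(p^e)$ irreducible subrings of $\Z^3$ on it, with the complement forced to be $\Z^{n-3}$ of index $1$. Together with $h_{3,2}(p^e)=g_3(p^e)$ (corank-$2$ subrings of $\Z^3$ being exactly the irreducible ones), this determines $h_{n,2}(p^e)$ recursively.

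The last step is to unwind the recursion by telescoping and applying the hockey-stick identity to the sums $\sum_m \binom{m-1}{2}$ and $\sum_m (m-1)\binom{m-2}{2}$ that appear, which collapse to the combinatorial factors multiplying $g_3(p^e)$ and $(e-1)$ in the statement. I expect the main obstacle to be the combinatorial bookkeeping rather than any hard new idea: one must use corank \emph{exactly} $1$ (not at most $1$) for the $\Z^{n-2}$ piece in the $j=2$ case; one must account for the fact that, once the two size-$2$ blocks are chosen, they are distinguishable, so the exponent-splitting factor is $e-1$; and one must carry out the hockey-stick simplifications without off-by-one errors at the base case $n=3$. It is also worth recording that $g_3$ is the only genuinely new counting quantity required: an irreducible subring of $\Z^m$ with $m\ge4$ has corank $m-1\ge3$ by the computation above, so it can never occur as a block of a corank-$2$ subring.
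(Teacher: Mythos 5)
Your route via Liu's irreducible decomposition is genuinely different from the paper's, which works directly with $n\times n$ Hermite normal form subring matrices: the paper fixes the two positions $i<j$ where the diagonal is a nontrivial power of $p$ and then splits into four cases according to $(a_{in},a_{jn})\in\{0,1\}^2$. Your structural reduction (for $p$-groups, rank is additive under direct sum, so corank equals $n$ minus the number of irreducible summands, and corank $2$ forces block sizes $\{3,1,\ldots,1\}$ or $\{2,2,1,\ldots,1\}$) is correct, and the recursion $h_{n,2}(p^e)=h_{n-1,2}(p^e)+(n-1)(e-1)\binom{n-2}{2}+\binom{n-1}{2}g_3(p^e)$ with base case $h_{3,2}(p^e)=g_3(p^e)$ is the right one for that decomposition.

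Carry the telescoping through, though, before asserting it ``collapses to the combinatorial factors in the statement.'' Using $(m-1)\binom{m-2}{2}=3\binom{m-1}{3}$ and the hockey-stick identity, your recursion unwinds to
\[
h_{n,2}(p^e)=\binom{n}{3}\,g_3(p^e)+3\binom{n}{4}\,(e-1),
\]
which agrees with the theorem's expression at $n=4$ (both give $4g_3(p^e)+3(e-1)$) but not for $n\ge 5$: at $n=5$ you get $10\,g_3(p^e)+15(e-1)$ versus the theorem's $13\,g_3(p^e)+12(e-1)$, and the two coincide at $e=2$ only because $g_3(p^2)=1$. This is not a flaw in your method. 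It traces to case (4) of the paper's proof (the case $a_{in}=a_{jn}=0$), which treats the $g_\alpha(p)$ choices for $a_{ij}$ and the choices of columns $\ell_i,\ell_j$ carrying the $1$'s in rows $i$ and $j$ as independent. They are not: when $\ell_i\ne\ell_j$, the product $v_j\circ v_{\ell_i}$ has $a_{ij}$ as its only nonzero entry (in row $i$), so closure of $\col(A)$ forces $p^{e_i}\mid a_{ij}$, hence $a_{ij}=0$ in Hermite normal form. Concretely, for $n=5$, diagonal $(p^2,p,1,1,1)$, $a_{12}=p$, $\ell_1=3$, $\ell_2=4$, one has $v_2\circ v_3=(p,0,0,0,0)\notin\col(A)$, so this matrix is not a subring matrix even though $a_{12}=p$ is among the $g_{(2,1)}(p)$ valid entries for the $3\times 3$ irreducible block. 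So your decomposition-based formula is the one I would trust; just be aware that it will not land on the coefficients as printed, and the discrepancy is worth flagging rather than forcing.
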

	We now give a similar but more complicated, result for subrings of corank $3$.
  \begin{thm}\label{corank3_upper}
		Suppose $e \ge 3$ and $n \ge 4$. Then
		\[
	 	h_{n,3}(p^e) = \frac{n^3 - 11n^2 + 40n - 40}{8} \binom{n-1}{3} g_4(p^e) + (3n - 5) \binom{n-1}{4}\sum_{j=2}^{e-1} (j-1) g_3(p^j).
	 	\]
   \end{thm}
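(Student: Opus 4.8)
The plan is to prove Theorem~\ref{corank3_upper} by the same route used for Proposition~\ref{count_cocyclic} and Theorem~\ref{corank2_upper}: Liu's decomposition theorem turns the count into a finite combinatorial problem about set partitions and irreducible subrings. Since ranks of finite abelian groups are detected prime by prime, it suffices to fix a prime $p$ and count the subrings $R\subseteq\Z^n$ of index $p^e$ with $\corank(R)=3$ exactly. By \cite[Theorem~3.4]{Liu} every such $R$ is uniquely a direct sum $R=\bigoplus_i R_i$ along a partition $\{B_1,\dots,B_t\}$ of $\{1,\dots,n\}$ with each $R_i\subseteq\Z^{B_i}$ irreducible, and $[\Z^n\colon R]=\prod_i[\Z^{B_i}\colon R_i]$, so that $p^e$ is split into block indices $p^{e_i}$.

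The structural input I would isolate as a lemma is that an irreducible subring of $\Z^m$ has corank $m-1$. This is immediate: since all coordinates of every element of an irreducible $R_i$ are congruent mod $p$ and $(1,\dots,1)\in R_i$, the image of $R_i$ in $\F_p^m$ is exactly the line $\F_p\cdot(1,\dots,1)$, so $\corank(R_i)=m-\dim_{\F_p}\overline{R_i}=m-1$; and $p$-rank is additive along direct sums. Hence $\corank(R)=\sum_i(|B_i|-1)=n-t$, and $R$ has corank exactly $3$ if and only if $t=n-3$, i.e. if and only if the non-singleton part of the partition is either one block of size $4$, or a block of size $3$ together with a block of size $2$, or three blocks of size $2$. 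At this stage I would also record the small-index facts that keep the formula clean: the unique subring of $\Z^2$ of index $p^m$ is irreducible, so $g_2(p^m)=1$ for $m\ge 1$; and $g_3(p^j)=0$ for $j\le 1$ (an irreducible subring of $\Z^3$ has index at least $p^2$), so in particular a corank-$3$ subring has index at least $p^3$, which is why one assumes $e\ge 3$.

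I would then count each of the three shapes: choose the coordinates forming the non-singleton blocks (a product of binomial coefficients, with a symmetry factor when a block size repeats), split the exponent $e$ among those blocks so their indices multiply to $p^e$, and choose an irreducible subring of the prescribed index in each block. The single-block-of-size-$4$ shape contributes a multiple of $g_4(p^e)$; the size-$3$-plus-size-$2$ shape contributes a multiple of $\sum_j g_3(p^j)g_2(p^{e-j})=\sum_{j=2}^{e-1}g_3(p^j)$; and the three-blocks-of-size-$2$ shape contributes a multiple of the number of compositions of $e$ into three positive parts, namely $\binom{e-1}{2}$. Adding and simplifying should produce the stated two-term formula: the coefficient of $g_4(p^e)$ collapses to $\tfrac{n^3-11n^2+40n-40}{8}\binom{n-1}{3}$, and, using $\sum_{j=2}^{e-1}(j-1)=\binom{e-1}{2}$ together with the requisite identities among the $n$-binomials, the remaining two contributions fuse into $(3n-5)\binom{n-1}{4}\sum_{j=2}^{e-1}(j-1)g_3(p^j)$.

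The main obstacle is precisely this last round of bookkeeping: getting the multiplicities of the three shapes and the convolution over how $p^e$ is distributed among blocks exactly right, and then checking that the polynomials in $n$ genuinely combine into the compact expression of the statement — in particular so that the single formula also handles the boundary cases $n=4$ and $n=5$, where $\binom{n-1}{4}$ and/or the three-blocks-of-size-$2$ term vanish. A good safeguard is to verify the output against Theorem~\ref{corank2_upper}, against the explicit $\zeta_{\Z^3}^R$ and $\zeta_{\Z^4}^R$ of Theorem~\ref{GeneratingFunctions} (via the Euler product $\zeta_{\Z^m,p}^R(s)=\sum_{\pi}\prod_{B\in\pi}\sum_{f\ge0}g_{|B|}(p^f)p^{-fs}$ over set partitions $\pi$), and against small numerical cases.
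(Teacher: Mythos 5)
Your structural reduction is correct and is genuinely different from the paper's. The paper never invokes Liu's decomposition theorem here; it instead fixes the three positions $i<j<k$ of the non-unit diagonal entries of the Hermite normal form matrix (via Theorem~\ref{corank_diag_thm}), uses Propositions~\ref{entry0or1prop}, \ref{prop_exactly_1}, \ref{prop:subring_matrix_divis}, and~\ref{liulastcol} to pin down the off-diagonal entries, and then does a brute-force case analysis over the eight values of $(a_{in},a_{jn},a_{kn})\in\{0,1\}^3$. Your Liu-decomposition route, carried out transparently, yields
\[
\binom{n}{4}g_4(p^e)\ +\ \binom{n}{3}\binom{n-3}{2}\sum_{j=2}^{e-1}g_3(p^j)\ +\ \tfrac{1}{6}\binom{n}{2}\binom{n-2}{2}\binom{n-4}{2}\binom{e-1}{2},
\]
which is a clean and correct consequence of the unique decomposition into irreducibles, the additivity of $p$-rank across blocks, and $\corank(R_B)=|B|-1$.

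The genuine gap is the final ``fusion'' step, and it cannot be filled: the three sequences $e\mapsto\sum_{j=2}^{e-1}g_3(p^j)$, $e\mapsto\binom{e-1}{2}$, and $e\mapsto\sum_{j=2}^{e-1}(j-1)g_3(p^j)$ take the values $(1,\,2+p,\,3+4p)$, $(1,3,6)$, $(1,\,3+2p,\,6+11p)$ at $e=3,4,5$ and are therefore linearly independent, so no combination of your last two terms produces the theorem's $\sum(j-1)g_3(p^j)$-term. The $g_4$-coefficients also diverge: $\tfrac{n^3-11n^2+40n-40}{8}\binom{n-1}{3}$ equals $\binom{n}{4}$ only for $n=4,5$ (for $n=6$ they are $25$ vs.\ $15$), and already at $n=5,e=4$ your count gives $5g_4(p^4)+10(2+p)$ against the theorem's $5g_4(p^4)+10(3+2p)$. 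Rather than trying to force the simplification, I would take the discrepancy seriously: in the paper's case $(a_{in},a_{jn},a_{kn})=(0,0,0)$, the asserted independence between the $g_\alpha(p)$ choices of $a_{ij},a_{ik},a_{jk}$ and the choices of which column carries the $1$ in rows $i,j,k$ fails whenever those columns differ, because multiplicative closure then forces the superdiagonal entries to vanish. Concretely, for $n=5$ with diagonal $(p^2,p,1,1,1)$, $a_{13}=a_{24}=1$, $a_{12}=p$, one has $v_2\circ v_3=(p,0,0,0,0)\notin\col(A)$, so this matrix is not a subring matrix even though the irreducible $3\times 3$ corner is. Your three-term formula and the paper's two-term formula cannot both be right for $n\geq 5$, and the evidence points toward the stated coefficients needing revision rather than toward a miracle identity that would make your fusion go through.
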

	\noindent We will prove these two theorems in Section \ref{subring_matrices}.  

 Our aim is to prove Theorem \ref{Hnk_123} using the explicit formulas for $h_{n,k}(p^e)$ when $k \in \{1,2,3\}.$ 

 \begin{prop} \label{prop:corank2_zeta} 
 Let 
  \begin{align*}
  a(n) &= \frac{3n^2-17n+36}{12} \binom{n-1}{2}\\
  b(n) &= 3\binom{n-1}{3}\\
m &= \binom{n}{2}.
  \end{align*}
 
  We have
      \begin{align*}
     \zeta_{\Z^n}^{R, (2)}(s)
     &= \zeta(s)^2\zeta(3s-1)\prod_p 
    \bigg(1   + \left(m-2\right) p^{-s} + ( a(n) + b(n) - m+1) p^{-2 s}
     - a(n)p^{-3s} \\
     &+ (  a(n)-1 )p^{1-3s} + (a(n)  - m + 2)p^{1-4 s} -
 (2a(n) + b(n)  - m+1 )p^{1-5s}  \bigg).
  \end{align*}
  The right-most pole of this function occurs at $s=1$ and has order $m$.
 \end{prop}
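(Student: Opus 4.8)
The plan is to compute the Euler factor $\zeta_{\Z^n,p}^{R,(2)}(s)=\sum_{e\ge 0}\tilde h_{n,2}(p^e)\,p^{-es}$ explicitly as a rational function of $t:=p^{-s}$, and then pull out the factors of $\zeta(s)$ and $\zeta(3s-1)$. First I would split $\tilde h_{n,2}(p^e)=h_{n,0}(p^e)+h_{n,1}(p^e)+h_{n,2}(p^e)$ and record each piece: $h_{n,0}(p^e)=1$ if $e=0$ and $0$ otherwise; $h_{n,1}(p^e)=\binom n2$ for $e\ge 1$ and $0$ for $e=0$ by Proposition~\ref{count_cocyclic}; and $h_{n,2}(p^e)=a(n)g_3(p^e)+b(n)(e-1)$ for $e\ge 2$ by Theorem~\ref{corank2_upper}, together with $h_{n,2}(p^e)=0$ for $e\le 1$ (a rank-two quotient at $p$ forces index at least $p^2$). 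Since also $g_3(p^0)=g_3(p^1)=0$ (the smallest index of an irreducible subring of $\Z^3$ will be seen to be $p^2$), summing over $e$ and using $\sum_{e\ge 2}(e-1)t^e=t^2/(1-t)^2$ gives
\[
\zeta_{\Z^n,p}^{R,(2)}(s)=1+\binom n2\,\frac{t}{1-t}+a(n)\,\widetilde g_{3,p}(s)+b(n)\,\frac{t^2}{(1-t)^2},
\]
where $\widetilde g_{m,p}(s):=\sum_{e\ge 0}g_m(p^e)p^{-es}$.

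The next step is to find a closed form for $\widetilde g_{3,p}(s)$. Summing Liu's decomposition of a subring of index a power of $p$ into irreducible pieces over the set partitions of $\{1,\dots,m\}$ gives $\zeta_{\Z^2,p}^R(s)=1+\widetilde g_{2,p}(s)$ and $\zeta_{\Z^3,p}^R(s)=1+3\,\widetilde g_{2,p}(s)+\widetilde g_{3,p}(s)$. Combined with $\zeta_{\Z^2}^R(s)=\zeta(s)$ and $\zeta_{\Z^3}^R(s)=\zeta(3s-1)\zeta(s)^3/\zeta(2s)^2$ from Theorem~\ref{GeneratingFunctions}, this yields $\widetilde g_{2,p}(s)=t/(1-t)$ and, after cancelling $(1-t)^2$ in the identity $\widetilde g_{3,p}(s)=\frac{(1-t^2)^2}{(1-pt^3)(1-t)^3}-1-\frac{3t}{1-t}$,
\[
\widetilde g_{3,p}(s)=\frac{t^2\big(1+pt+2pt^2\big)}{(1-pt^3)(1-t)}.
\]
Substituting this into the display above and multiplying through by $(1-t)^2(1-pt^3)$ — which is precisely the reciprocal of the $p$-Euler factor of $\zeta(s)^2\zeta(3s-1)$ — I would collect the coefficients of $t^0,\dots,t^5$; this produces exactly the polynomial in the statement. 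This is the computational heart of the argument and the step I expect to be the main obstacle: the coefficients of $t^3$, $t^4$, $t^5$ mix the $p$-weighted terms from $\widetilde g_{3,p}$ with those coming from $b(n)$ and from expanding $(1-t)^2(1-pt^3)$, so careful bookkeeping is needed to land on the claimed expression.

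For the last assertion, the factor $\zeta(s)^2\zeta(3s-1)$ is holomorphic and nonvanishing for $\Re(s)>1$, with a double pole at $s=1$, while $\zeta(3s-1)$ is holomorphic and nonzero at $s=1$ (its value there is $\zeta(2)$). The remaining Euler product $\prod_p P_p(s)$ has $p$-factor $1+(m-2)p^{-s}+O\!\big(p^{-2s}+p^{1-3s}\big)$, so I would write $\prod_p P_p(s)=\zeta(s)^{m-2}\prod_p\big((1-p^{-s})^{m-2}P_p(s)\big)$; the $p^{-s}$-terms cancel in every local factor, so $Q(s):=\prod_p (1-p^{-s})^{m-2}P_p(s)$ converges absolutely and is holomorphic for $\Re(s)>\tfrac23$. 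To see $Q(1)\neq 0$ I would use positivity: $\zeta_{\Z^n,p}^{R,(2)}(s)$ has nonnegative Dirichlet coefficients with constant term $1$, and for real $s>\tfrac13$ the denominator $(1-t)^2(1-pt^3)$ is positive, so $P_p(s)>0$ there and in particular every factor of $Q(1)$ is positive. Hence $\zeta_{\Z^n}^{R,(2)}(s)=\zeta(s)^{m}\,\zeta(3s-1)\,Q(s)$ with $\zeta(3s-1)Q(s)$ holomorphic and nonvanishing near $s=1$ and $Q$ holomorphic for $\Re(s)>\tfrac23$, so the right-most pole of $\zeta_{\Z^n}^{R,(2)}(s)$ is at $s=1$ and has order exactly $m=\binom n2$. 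The identity of Dirichlet series is first established in the region of absolute convergence ($\Re(s)$ large), where both sides have Euler products with equal local factors, and then extended by the factorizations above.
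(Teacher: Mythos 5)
Your proof is correct and follows the same overall architecture as the paper's: write $\tilde h_{n,2}(p^e)=[e=0]+\binom n2[e\ge1]+h_{n,2}(p^e)$, plug in Theorem \ref{corank2_upper}, sum the geometric pieces, pull out $(1-t)^2(1-pt^3)$ with $t=p^{-s}$, and then isolate the order-$m$ pole at $s=1$. I verified that your expansion of $P_p=(1-t)^2(1-pt^3)\,\zeta^{R,(2)}_{\Z^n,p}(s)$ produces exactly the stated coefficients in degrees $0$ through $5$. The one place you diverge from the paper is in how you obtain the generating function $\widetilde g_{3,p}(s)=\frac{t^2(1+pt+2pt^2)}{(1-t)(1-pt^3)}$. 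The paper simply cites Liu's closed form $B_2(p,x)$; you instead rederive it by combining Liu's direct-sum decomposition (giving $\zeta^R_{\Z^2,p}=1+\widetilde g_{2,p}$ and $\zeta^R_{\Z^3,p}=1+3\widetilde g_{2,p}+\widetilde g_{3,p}$ via set partitions of $\{1,2,3\}$) with the explicit local factors from Theorem \ref{GeneratingFunctions}. This is a pleasant self-contained alternative that trades a citation of Liu's $B_2$ for a citation of the Datskovsky--Wright formula for $\zeta^R_{\Z^3}$, and it makes the identity $\zeta^{R,(2)}_{\Z^3}=\zeta^R_{\Z^3}$ transparent as a sanity check. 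Your treatment of the pole order is also spelled out a bit more carefully than the paper's one-line invocation of the Corollary \ref{cor:cocyclic_asymptotic} argument: factoring the residual Euler product as $\zeta(s)^{m-2}Q(s)$ with $Q$ absolutely convergent for $\Re(s)>\tfrac23$ and using positivity of each local factor of $\zeta^{R,(2)}_{\Z^n,p}$ at real $s$ to get $Q(1)>0$ is exactly the right way to make the paper's ``it is clear that'' precise.
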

 \noindent We defined $a(n)$ and $b(n)$ in this way so that Theorem \ref{corank2_upper} becomes 
 \[
 h_{n,2}(p^e) = a(n) g_3(p^3) + b(n) (e-1).
\]
 Before giving the proof, we remark that we can explicitly determine the constant derived from applying Theorem \ref{tauberian_theorem} to the function in Proposition \ref{prop:corank2_zeta}.

\begin{cor}\label{cor:corank2_constant}
    Let $a(n), b(n),$ and $m$ be defined as in Proposition \ref{prop:corank2_zeta}. Then we have
\[
    H_{n,2}(X) \sim C_{n,2} X(\log X)^{m-1}
\]
    where
 \begin{align*}
     C_{n,2} &= \frac{\zeta(2)}{(m-1)!} \prod_p (1-p^{-1})^{m-2} \bigg(1  + \left(m-2\right) p^{-1} + ( 2a(n) + b(n) - m) p^{-2 } \\
     &-  ( m-2)p^{-3} -(2a(n) + b(n)  - m+1 )
 p^{-4}  \bigg).
  \end{align*}
\end{cor}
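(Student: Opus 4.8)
The plan is to apply the Factorization Method and Theorem~\ref{tauberian_theorem} to the explicit Euler product furnished by Proposition~\ref{prop:corank2_zeta}, exactly as in the proof of Corollary~\ref{cor:cocyclic_asymptotic}. Write $L_p(s)$ for the local factor (a polynomial in $p^{-s}$, together with the terms $p^{1-3s},p^{1-4s},p^{1-5s}$) appearing in Proposition~\ref{prop:corank2_zeta}, so that $\zeta_{\Z^n}^{R,(2)}(s) = \zeta(s)^2\zeta(3s-1)\prod_p L_p(s)$ and $L_p(s) = 1 + (m-2)p^{-s} + O(p^{-2s}) + O(p^{1-3s})$. First I would multiply and divide by $\zeta(s)^{m-2} = \prod_p (1-p^{-s})^{-(m-2)}$ to obtain
\[
\zeta_{\Z^n}^{R,(2)}(s) = \zeta(s)^m\,\zeta(3s-1)\prod_p\bigl((1-p^{-s})^{m-2}L_p(s)\bigr).
\]
The coefficient of $p^{-s}$ in $(1-p^{-s})^{m-2}L_p(s)$ is $-(m-2)+(m-2)=0$, so each factor equals $1 + O(p^{-2s}) + O(p^{1-3s})$; hence $\prod_p (1-p^{-s})^{m-2}L_p(s)$ converges absolutely, to an analytic and non-vanishing function, for $\Re(s) > 2/3$, and in particular on a neighborhood of $s=1$ and on the whole line $\Re(s) = 1$.

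Next I would verify the hypotheses of Theorem~\ref{tauberian_theorem} with $\alpha = 1$ and $\beta = m$. The coefficients $\tilde h_{n,2}(j)$ are nonnegative, and comparing the Euler product with $\zeta(s)^{m-2}$ shows $\zeta_{\Z^n}^{R,(2)}(s)$ converges for $\Re(s)>1$. On $\Re(s)=1$, $\zeta(s)^m$ is analytic away from $s=1$, $\zeta(3s-1)$ is analytic (its pole sits at $s=2/3$), and the residual product is analytic by the previous step, giving condition~(1). For condition~(2) I would put
\[
G(s) = \bigl((s-1)\zeta(s)\bigr)^m\zeta(3s-1)\prod_p\bigl((1-p^{-s})^{m-2}L_p(s)\bigr), \qquad H(s) = 0,
\]
so $G$ is analytic at $s=1$ with $G(1) = \zeta(2)\prod_p (1-p^{-1})^{m-2}L_p(1)$, a convergent product of positive reals and hence nonzero. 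Theorem~\ref{tauberian_theorem} then yields
\[
H_{n,2}(X) = \sum_{j\le X}\tilde h_{n,2}(j) \sim \frac{G(1)}{\Gamma(m)}\,X(\log X)^{m-1},
\]
so $C_{n,2} = \frac{\zeta(2)}{(m-1)!}\prod_p (1-p^{-1})^{m-2}L_p(1)$.

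The final step is to evaluate $L_p(1)$. Setting $s=1$ in the local factor of Proposition~\ref{prop:corank2_zeta} and using $p^{1-3s}|_{s=1}=p^{-2}$, $p^{1-4s}|_{s=1}=p^{-3}$, $p^{1-5s}|_{s=1}=p^{-4}$, the coefficients combine degree by degree to
\[
L_p(1) = 1 + (m-2)p^{-1} + \bigl(2a(n)+b(n)-m\bigr)p^{-2} - (m-2)p^{-3} - \bigl(2a(n)+b(n)-m+1\bigr)p^{-4},
\]
which is precisely the polynomial in the statement; substituting it into the formula for $C_{n,2}$ finishes the proof.

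There is no serious obstacle here, as the argument is essentially bookkeeping; the two points that require care are (i) checking that the $p^{-s}$ terms cancel so that the residual Euler product extends past $\Re(s)=1$ — this is what forces the pole of $\zeta_{\Z^n}^{R,(2)}(s)$ at $s=1$ to have order exactly $m$, and it is already implicit in Proposition~\ref{prop:corank2_zeta} — and (ii) correctly collecting the coefficients of $p^{-2}$, $p^{-3}$, and $p^{-4}$ when specializing $L_p(s)$ to $s=1$.
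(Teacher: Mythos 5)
Your argument is correct and is exactly the approach the paper intends: Proposition~\ref{prop:corank2_zeta} provides the Euler product, one factors out $\zeta(s)^{m-2}$ to exhibit the pole of order $m$ at $s=1$ (the $p^{-s}$ cancellation you note), and Theorem~\ref{tauberian_theorem} is applied as in the proof of Corollary~\ref{cor:cocyclic_asymptotic}, with $L_p(1)$ collected term by term to match the stated constant. The only small point you gloss over is that $L_p(1)>0$ for every $p$, which is needed for $G(1)\neq 0$; this follows since $L_p(1) = (1-p^{-1})^2(1-p^{-2})\,\zeta_{\Z^n,p}^{R,(2)}(1)$, a positive multiple of a convergent sum of nonnegative terms with leading term $1$.
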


Note that $H_{3,2}(X) = N_3(X)$ since subrings in $\Z^3$ have corank at most 2. The constant $C_{3,2}$ from Corollary \ref{cor:corank2_constant} is consistent with the constant in the asymptotic for $N_3(X)$ given in Corollary \ref{Nn_growth}. 

  \begin{proof}[Proof of Proposition \ref{prop:corank2_zeta}]

  Consider $\tilde{h}_{n,2}(p^e)$. Observe that $\tilde{h}_{n,2}(p^0) = 1$ and $\tilde{h}_{n,2}(p^1) = \binom{n}{2}$. For any $e \ge 2$, we have that
  $$
  \tilde{h}_{n,2}(p^e) = \binom{n}{2} + h_{n,2}(p^e).
  $$
  Applying Theorem \ref{corank2_upper}, we obtain
  \begin{align*}
     \zeta_{\Z^n,p}^{R, (2)}(s) &= 1 + \sum_{e \ge 1}\binom{n}{2} p^{-es} + \sum_{e \ge 2} \left(  a(n)g_3(p^e) + b(n) (e-1)\right)p^{-es}.
  \end{align*}

Liu shows that
	\begin{align*}
		\sum_{e \ge 0} g_3(p^e) p^{-es}&=\sum_{e \ge 2} g_3(p^e) p^{-es}= \frac{p^{-2s} + p^{1-3s} + 2p^{1-4s}}{(1-p^{-s})(1-p^{1-3s})}.
	\end{align*}
This is $B_2(p,x)$ with $x = p^{-s}$ \cite[page 296]{Liu}.

 Using this formula and standard geometric series formulas, we simplify our expression as follows using Mathematica \cite{mathematica}. The first author has posted the Mathematica worksheets on her website \cite{mathematica_work}. We have
  \begin{align*}
     \zeta_{\Z^n,p}^{R, (2)}(s)
     &= 1 + \binom{n}{2}  \frac{p^{-s}}{1-p^{-s}} + a(n) \frac{p^{-2s} + p^{1-3s} + 2p^{1-4s}}{(1-p^{-s})(1-p^{1-3s})} + b(n) \frac{p^{-2s}}{(1-p^{-s})^2}\\
     &= \frac{1}{(1-p^{-s})^2 (1-p^{1-3s})} \bigg(1  + ( a(n) + b(n) - \binom{n}{2}+1) p^{-2 s} + \left(\binom{n}{2}-2\right) p^{-s} \\
     &- a(n)p^{-3s}  + p^{1-3s}(  a(n)-1 ) + p^{1-4 s} (a(n)  - \binom{n}{2} + 2)-
 p^{1-5s} (2a(n) + b(n)  - \binom{n}{2}+1 ) \bigg).
  \end{align*}
 
An argument like the one given in the proof of Corollary \ref{cor:cocyclic_asymptotic} shows that the right-most pole of this function is located at $s=1$, and the order of this pole is $\binom{n}{2}$.  Applying Theorem \ref{tauberian_theorem} now gives the asymptotic result. 
\end{proof}

\begin{prop}\label{prop:corank3_zeta}
        Let 
       \begin{align*}
  a(n) &= \frac{3n^2-17n+36}{12} \binom{n-1}{2}\\
  b(n) &= 3\binom{n-1}{3}\\
  c(n) &= \frac{n^3-11n^2+40n-40}{8} \binom{n-1}{3}\\
  d(n) &= (3n-5) \binom{n-1}{4}\\
  m &= \binom{n}{2}.
  \end{align*}
The right-most pole of $\zeta_{\Z^n}^{R,(3)}(s)$ occurs at $s=1$ and has order $m$.  We have 
 \[
    H_{n,3}(X) \sim C_{n,3} X(\log X)^{m-1}
\]
where
\begin{align*}
    C_{n,3} = \frac{1}{(m-1)!} \prod_p (1-p^{-1})^{m-4} &\bigg(1 + p^{-1} (m-4) + p^{-2} (6 + 2 a(n) + b(n) +c(n) - 3 m) \\
    &+p^{-3} (-4 - 4 a(n) - 2 b(n)+ 6 c(n) + 
  3 d(n) + 3 m)\\
  &+ p^{-4}(1 + 2a(n) + b(n) - 7 c(n) + 2 d(n) - m)\bigg).
\end{align*}
\end{prop}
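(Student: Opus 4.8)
The plan is to follow the template used for Corollary \ref{cor:cocyclic_asymptotic} and Proposition \ref{prop:corank2_zeta}: write the Euler factor $\zeta_{\Z^n,p}^{R,(3)}(s)$ in closed form using the explicit counts $h_{n,i}(p^e)$ for $i \le 3$, locate its right-most pole, and apply the Tauberian theorem (Theorem \ref{tauberian_theorem}).

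First I would assemble the local Dirichlet series. Since $\tilde h_{n,3}(p^e) = \sum_{i=0}^{3}h_{n,i}(p^e)$, with $h_{n,0}(p^e)$ equal to $1$ if $e = 0$ and $0$ otherwise, $h_{n,1}(p^e) = m$ for $e \ge 1$ (Proposition \ref{count_cocyclic}), $h_{n,2}(p^e) = a(n)g_3(p^e) + b(n)(e-1)$ for $e \ge 2$ (Theorem \ref{corank2_upper}), and $h_{n,3}(p^e) = c(n)g_4(p^e) + d(n)\sum_{j=2}^{e-1}(j-1)g_3(p^j)$ for $e \ge 3$ (Theorem \ref{corank3_upper}), setting $x = p^{-s}$ gives
\[
\zeta_{\Z^n,p}^{R,(3)}(s) = 1 + m\,\frac{x}{1-x} + a(n)B_2(p,x) + b(n)\,\frac{x^2}{(1-x)^2} + c(n)B_3(p,x) + d(n)\,C(p,x),
\]
where $B_2(p,x) = \sum_e g_3(p^e)x^e$ is the rational function recalled in the proof of Proposition \ref{prop:corank2_zeta}, $B_3(p,x) = \sum_e g_4(p^e)x^e$ is Liu's rational function counting irreducible subrings of $\Z^4$ (obtainable from Theorem \ref{GeneratingFunctions} via Liu's decomposition into irreducibles, or taken directly from \cite{Liu}), and $C(p,x) = \sum_{e \ge 3}\big(\sum_{j=2}^{e-1}(j-1)g_3(p^j)\big)x^e$. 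The first five terms are handled exactly as in Proposition \ref{prop:corank2_zeta}. The one genuinely new ingredient is the double sum; for this I would use that multiplying a power series by $\tfrac{1}{1-x}$ forms its partial sums while $x\tfrac{d}{dx}$ multiplies the coefficient of $x^j$ by $j$, so that
\[
C(p,x) = \frac{x}{1-x}\left(x\,\frac{d}{dx}B_2(p,x) - B_2(p,x)\right).
\]
Substituting this, together with Liu's formula for $B_3$, and simplifying with the aid of Mathematica as in the posted worksheets \cite{mathematica_work}, expresses $\zeta_{\Z^n,p}^{R,(3)}(s)$ as an explicit rational function of $p$ and $x = p^{-s}$.

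Next I would read off the pole structure. As in the proof of Corollary \ref{cor:cocyclic_asymptotic}, multiplying $\zeta_{\Z^n}^{R,(3)}(s)$ by $\prod_p(1-p^{-s})^t$ and using $\tilde h_{n,3}(p) = m$ shows the right-most pole sits at $s = 1$ and has order exactly $m$: the coefficient of $p^{-s}$ in the Euler factor is $m$, and the remaining denominator factors --- those coming from $1 - px^3$ in $B_2$ and from the denominator of $B_3$ --- contribute only poles with $\Re(s) < 1$, hence do not interfere. Writing $\zeta_{\Z^n}^{R,(3)}(s) = \zeta(s)^m\prod_p\big[(1-p^{-s})^m\zeta_{\Z^n,p}^{R,(3)}(s)\big]$, the product on the right has local factor $1 + O(p^{-2})$ at $s = 1$ and therefore converges there to a value $G(1)$. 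Applying Theorem \ref{tauberian_theorem} with $\alpha = 1$ and $\beta = m$ then yields $H_{n,3}(X) \sim \frac{G(1)}{(m-1)!}X(\log X)^{m-1}$, and a final simplification of the local factor $(1-p^{-1})^m\zeta_{\Z^n,p}^{R,(3)}(1)$ puts $G(1)$ into the stated form, the degree-four polynomial in $p^{-1}$ collecting the contributions of $a(n), b(n), c(n), d(n)$.

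The hard part is purely computational: producing and then simplifying the closed form, in particular handling the double sum $C(p,x)$ and the comparatively complicated rational function $B_3(p,x)$, and verifying that the result really collapses to $1 + (m-4)p^{-1} + \cdots$ as claimed. As a check I would treat the case $n = 4$ separately: a unital subring of $\Z^4$ contains $\Z\cdot(1,1,1,1)$, so its quotient has rank at most $3$ and hence $H_{4,3}(X) = N_4(X)$; here $d(4) = 0$, and one confirms that $C_{4,3}$ reduces to $\frac{1}{5!\,\zeta(2)^3}$, matching Corollary \ref{Nn_growth}.
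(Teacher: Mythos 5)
Your proposal is correct and follows essentially the same route as the paper: assemble $\zeta_{\Z^n,p}^{R,(3)}(s)$ from the counts $h_{n,i}(p^e)$ for $i \le 3$, express the double sum $C(p,x)$ via $\tfrac{x}{1-x}\bigl(x\tfrac{d}{dx}B_2(p,x)-B_2(p,x)\bigr)$ (the paper writes this as $\tfrac{p^{-s}}{1-p^{-s}}\sum_{e\ge 2}(e-1)g_3(p^e)p^{-es}$ and then differentiates Liu's rational form of $B_2$, which is the same manipulation), substitute Liu's $B_3$, locate the pole at $s=1$ of order $m=\binom{n}{2}$ by the $(1-p^{-s})^t$ argument of Corollary \ref{cor:cocyclic_asymptotic}, and invoke Theorem \ref{tauberian_theorem}; the paper also performs the $n=4$ consistency check against Corollary \ref{Nn_growth}. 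The only small improvement you make is to write $h_{n,2}(p^e) = a(n)g_3(p^e)+b(n)(e-1)$, correcting what appears to be a typo ($g_3(p^3)$) in the paper's display preceding the proof.
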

 \noindent We defined $c(n)$ and $d(n)$ in this way so that Theorem \ref{corank3_upper} says 
 \[
 h_{n,3}(p^e) = c(n) g_4(p^e) + d(n) \sum_{j=2}^{e-1} g_3(p^j).
\]
We could write down an expression for $\zeta_{\Z^n}^{R,(3)}(s)$ analogous to the one given for $\zeta_{\Z^n}^{R,(2)}(s)$ in Proposition \ref{prop:corank2_zeta}, but in the interest of space, we omit it.

Note that constant $C_{4,3}$ from Proposition \ref{prop:corank3_zeta} is consistent with the constant in the asymptotic for $N_4(X)$ given in Corollary \ref{Nn_growth}. See the first author's website \cite{mathematica_work} for details. 

\begin{proof}
  Consider $\tilde{h}_{n,3}(p^e)$. Observe that $\tilde{h}_{n,3}(p^0) = 1,\ \tilde{h}_{n,3}(p^1) = \binom{n}{2}$, and $\tilde{h}_{n,3}(p^2) = \tilde{h}_{n,2}(p^2)$.
  For $e \ge 3$, 
  \[
  \tilde{h}_{n,3}(p^e) = \binom{n}{2} + h_{n,2}(p^e) + h_{n,3}(p^e).
  \]
Plugging these expressions into $\zeta_{\Z^n, p}^{R, (3)}(s)$, we have
  \begin{eqnarray}\label{corank3_zeta}
      \zeta_{\Z^n, p}^{R, (3)}(s) &=& 1 + \sum_{e\ge 1} \binom{n}{2} p^{-es} + \sum_{e \ge 2} \left(a(n) g_3(p^3) + b(n)(e-1)\right)p^{-es} \\
      &+& \sum_{e \ge 3} \left(c(n) g_4(p^e) + d(n) \sum_{j=2}^{e-1} (j-1)g_3(p^j)\right)p^{-es},\nonumber
  \end{eqnarray}
where $a(n), b(n), c(n)$, and $d(n)$ are defined as in the statement of the proposition.

The proofs of the $k=1$ and $k=2$ cases of Theorem \ref{Hnk_123} handle
the first three summands, so we focus on the last summand. Many of these calculations and simplifications were performed in Mathematica \cite{mathematica}; see the first author's website \cite{mathematica_work} for the code.

  First consider
  \[
  \sum_{e\ge 3} c(n) g_4(p^e)p^{-es}.
  \]
Liu proves that 
   \begin{align*}
 \sum_{e\ge 3} g_4(p^e)p^{-es} &= \frac{p^{-3s}}{(1-p^{-s})^2 (1-p^{1-3s}) (1-p^{2-4s}) (1-p^{3-6s})}\bigg( 1 + (p^2+p-1)p^{-s} \\
  &+ (5p^2-p)p^{-2s} 
  +(p^3+p^2-p)p^{-3s} + (7p^3-11p^2+p)p^{-4s} + (p^3+p^2)p^{-5s}\\
  &+ (3p^4 -13p^3+3p^2)p^{-6s} + (-p^5+2p^3)p^{-7s} + (-4p^5 -6p^4+4p^3)p^{-8s} \\
  &+ (-2p^5+p^3)p^{-9s} +(-3p^6+4p^5)p^{-10s} +6p^{6-12s} \bigg).
  \end{align*}
  \noindent This is $B_3(p,x)$ with $x = p^{-s}$ \cite[Proposition 6.3]{Liu}.

  Next consider 
  \begin{align*}
  \sum_{e \ge 3} \left( \sum_{j=2}^{e-1} (j-1)g_3(p^j)\right)p^{-es} &= \sum_{e \ge 3}\left( \sum_{j=2}^{e-1} (j-1)g_3(p^j) p^{-js} p^{(-e+j)s}\right)\\
  &= g_3(p^2)p^{-2s} \sum_{e \ge 1} p^{-es} + 2g_3(p^3)p^{-3s} \sum_{e \ge 1} p^{-es}+ 3g_3(p^4) p^{-4s}\sum_{e \ge 1}p^{-es} + \cdots \\
  &= \frac{p^{-s}}{1-p^{-s}} \sum_{e \ge 2} (e-1)g_3(p^{e}) p^{-es}.
  \end{align*}
We consider the derivative with respect to $x$ of both expressions for $B_2(p,x)$ given by \cite[page 296]{Liu}.  Observe that 
  \[
  \frac{d}{dx} \left(\sum_{e \ge 2} g_3(p^e)x^e \right) =x^{-1} \sum_{e \ge 2} eg_3(p^e) x^e.
  \]
  Differentiating the rational function expression for $B_2(p,x)$ with respect to $x$ gives
  \begin{align*}
  \frac{d}{dx} \left(\sum_{e \ge 2} g_3(p^e) x^e \right) &= \frac{d}{dx} \left( \frac{x^2 + p x^3 + 2p x^4}{(1-x)(1-p x^3)} \right)\\
  &=
\frac{-x (-2 + x - 3 p x - 6 p x^2 + 5 p x^3 + 2 p x^4 + 3 p^2 x^5)}{(1 - x)^2 (1 - p x^3)^2}.
\end{align*}

Combining the expressions for each of the four summands in \eqref{corank3_zeta} and putting them over a common denominator gives the formula for $\zeta_{\Z^n}^{R, (3)}(s)$.

   From here, we can see that neither of these last two summands contributes a pole to the right of $s=1$ when we take the Euler product. Since $g_4(p^e) \le f_4(p^e)$, we see that
   \[
\prod_p \left(1+\sum_{e \ge 2} g_4(p^e)p^{-es}\right)
   \]
converges to the right of $\Re(s) = 1$ and cannot have a pole at $s=1$ of order larger than $\binom{n}{2}$. An analogous statement holds for 
\[
\prod_p \left(1+\sum_{e\ge 3} \left(\sum_{j=2}^{e-1} (j-1)g_3(p^j)\right) p^{-es}\right).
\]

We can now apply the strategy of the proof of Corollary \ref{cor:cocyclic_asymptotic}, multiplying the expression for $\zeta_{\Z^n, p}^{R, (3)}(s)$ by $(1-p^{-s})^{\binom{n}{2}}$ and then using the fact that if $a_n>0$ for all $n$, then the infinite product $\prod_n (1+a_n)$ converges if and only if $\sum_n a_n$ converges.  In this way, we see that $\zeta_{\Z^n}^{R,(3)}(s)$ has its right-most pole at $s=1$ and that pole has order exactly $\binom{n}{2}$.

  \end{proof}

We have now completed the proof of Theorem \ref{Hnk_123}.
\begin{proof}[Proof of Theorem \ref{Hnk_123}]
This theorem follows directly from Corollary \ref{cor:cocyclic_asymptotic}, Corollary \ref{cor:corank2_constant}, and Proposition \ref{prop:corank3_zeta}.
\end{proof}
We can use the expression for $\tilde{h}_{4,3}(p^e)$ that comes from Proposition \ref{count_cocyclic}, Theorem \ref{corank2_upper}, and Theorem \ref{corank3_upper} along with the ideas in this proof to verify that $\zeta_{\Z^4}^{R,(3)}(s) = \zeta_{\Z^4}^R(s)$, which gives a nice check of our results.  Similarly, we can use the expression for $\tilde{h}_{3,2}(p^e)$ that comes from Theorem \ref{corank2_upper} and Proposition \ref{count_cocyclic} to verify that $\zeta_{\Z^3}^{R,(2)}(s) = \zeta_{\Z^3}^R(s)$.

		A main idea of the proof of Theorem \ref{Hnk_upper} (and thus Theorem \ref{proportion_corank_k}) is to prove upper bounds for $\tilde{h}_{n,k}(p^e)$, which then imply that the right-most pole of $\zeta_{\Z^n}^{R,(k)}(s)$ cannot be too large.  Applying Theorem \ref{tauberian_theorem} then completes the proof. We state a general upper bound for the number of subrings of $\Z^n$ of corank $k$ and index $p^e$.  We defer the proof until the next section. 
	\begin{thm}\label{general_upper}
Suppose $e,n$ and $k$ are positive integers with $k \le n-1$.
\begin{enumerate}
\item We have
\[
\binom{n-1}{k} g_{k+1}(p^e) \le h_{n,k}(p^e) \le (n-k)^k \binom{n-1}{k} g_{k+1}(p^e).
\]
\item We have 
\[
\tilde{h}_{n,k}(p^e) \le k (n-1)^k 2^{n-1} f_{k+1}(p^e).
\]
\end{enumerate}
	\end{thm}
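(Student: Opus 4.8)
The plan is to derive both inequalities from Liu's theorem \cite{Liu} that a $p$-power-index subring $R\subseteq\Z^n$ decomposes uniquely as a direct sum of irreducible subrings supported on the blocks of a set partition of $\{1,\dots,n\}$, combined with some bookkeeping of that partition and one gluing construction. Two structural facts do the work. First, if $R'\subseteq\Z^m$ is irreducible of index $p^{e'}$ with $e'\ge 1$, then $\corank(R')=m-1$: irreducibility forces $R'\subseteq\{x\in\Z^m:x_1\equiv\cdots\equiv x_m\pmod p\}$, a subgroup of index $p^{m-1}$, so $\Z^m/R'$ surjects onto $(\Z/p)^{m-1}$ and has rank at least $m-1$; its rank is at most $m$, and rank exactly $m$ would produce a surjection $\Z^m/R'\twoheadrightarrow(\Z/p)^m$, i.e.\ $R'\subseteq p\Z^m$, contradicting $(1,\dots,1)\in R'$. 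Consequently, in Liu's decomposition a size-$1$ block contributes the zero summand, while a size-$m$ block with $m\ge2$ necessarily carries an irreducible subring of index $p^{e'}$ with $e'\ge1$ (as $\Z^m$ itself is not irreducible) and hence contributes $m-1$ to $\corank(R)$; thus $\corank(R)=\sum_i(|S_i|-1)$ over the non-singleton blocks $S_i$.

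The second fact is a gluing bound: if $m_1,\dots,m_r\ge2$ satisfy $\sum_i(m_i-1)=k$, then
\[
\sum_{e_1+\cdots+e_r=e}\ \prod_{i=1}^{r} g_{m_i}(p^{e_i})\ \le\ g_{k+1}(p^e).
\]
To prove this I would exhibit an injection sending an $r$-tuple of irreducible subrings $R_i\subseteq\Z^{m_i}$ of index $p^{e_i}$ to a single irreducible subring $R'\subseteq\Z^{k+1}$ of index $p^{e_1+\cdots+e_r}$. Label the coordinates of $\Z^{k+1}$ by a distinguished symbol $\star$ together with symbols $(i,j)$ for $1\le i\le r$ and $2\le j\le m_i$ (there are $1+\sum_i(m_i-1)=k+1$ of them), and put $R'=\{x\in\Z^{k+1}:(x_\star,x_{i,2},\dots,x_{i,m_i})\in R_i\ \text{for all}\ i\}$. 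One checks that $R'$ is a subring (it contains $(1,\dots,1)$ because each $R_i$ does), that $R'$ is irreducible, that each $R_i$ is recovered from $R'$ as a coordinate projection — so the map is injective — and that $R'$ has index exactly $p^{e_1+\cdots+e_r}$; the last point holds because each $R_i$ contains the all-ones vector, so $R_i+(\{0\}\times\Z^{m_i-1})=\Z^{m_i}$ and the sharing of the coordinate $\star$ does not shrink the image of $\Z^{k+1}\to\prod_i(\Z^{m_i}/R_i)$.

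For part (1), write a corank-$k$ index-$p^e$ subring $R\subseteq\Z^n$ via Liu's decomposition, with non-singleton blocks $S_1,\dots,S_r$ carrying irreducible subrings $R_i$ and $\sum_i(|S_i|-1)=k$. Associate to $R$ the set $U$ of non-minimal elements of the non-singleton blocks, the map $\phi\colon U\to\{1,\dots,n\}\setminus U$ taking each such element to the least element of its block, and the glued subring from the second fact. Since $1$ is the least element of its own block we have $U\subseteq\{2,\dots,n\}$ and $|U|=k$, so there are at most $\binom{n-1}{k}$ choices of $U$, at most $(n-k)^k$ of $\phi$, and — since $(U,\phi)$ determines the blocks — at most $g_{k+1}(p^e)$ choices for the glued subring; this data reconstructs $R$, proving the upper bound. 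For the lower bound use only the case $r=1$: for each $k$-subset $U\subseteq\{2,\dots,n\}$ form the block $U\cup\{1\}$, place an arbitrary irreducible subring of index $p^e$ on it, and take singletons elsewhere; this yields $\binom{n-1}{k}g_{k+1}(p^e)$ distinct subrings, each of corank exactly $k$ by the first fact.

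For part (2), $\tilde h_{n,k}(p^e)=\sum_{i=0}^{k}h_{n,i}(p^e)$; the $i=0$ term is $0$ for $e\ge1$, and for $1\le i\le k$ part (1) together with $(n-i)^i\le(n-1)^k$, $\binom{n-1}{i}\le2^{n-1}$, and $g_{i+1}(p^e)\le f_{i+1}(p^e)\le f_{k+1}(p^e)$ (embed an irreducible subring of $\Z^{i+1}$ into $\Z^{k+1}$ as a direct sum with $\Z^{k-i}$) gives $h_{n,i}(p^e)\le(n-1)^k2^{n-1}f_{k+1}(p^e)$; summing the $k$ nonzero terms finishes the argument. The one delicate step is the gluing construction behind the second fact: it is the verification that $R'$ has index exactly $p^{e_1+\cdots+e_r}$, not something smaller, that lets $\prod_i g_{m_i}(p^{e_i})$ be absorbed into a single factor $g_{k+1}(p^e)$ (and hence, after the direct-sum embedding, into $f_{k+1}(p^e)$). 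Everything else is elementary partition bookkeeping, and the whole argument could equally be phrased in terms of Hermite normal form matrices spanning subrings, as in Section~\ref{subring_matrices}.
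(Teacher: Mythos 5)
Your proof is correct, but it takes a genuinely different route from the paper's. The paper works entirely in the Hermite normal form picture: Theorem~\ref{corank_diag_thm} identifies the $k$-subset $I\subseteq\{1,\dots,n-1\}$ of diagonal positions carrying $p$-power entries, Proposition~\ref{prop:subring_matrix_divis}(2) extracts a $(k+1)\times(k+1)$ irreducible subring matrix from those rows and columns (yielding the factor $g_\alpha(p)$, summing over compositions to $g_{k+1}(p^e)$), and Proposition~\ref{prop_exactly_1}(2) bounds by $(n-k)^k$ the placements of the unique entry equal to $1$ in each of those $k$ rows. You instead invoke Liu's unique decomposition of a $p$-power-index subring into irreducibles on the blocks of a set partition, deduce corank $=\sum_i(|S_i|-1)$ from the fact that irreducible subrings of $\Z^m$ have corank $m-1$ (this is the paper's Proposition~\ref{irred_corank_prop}, which you re-prove), and encode the partition by the $k$-subset $U\subseteq\{2,\dots,n\}$ of non-minimal elements together with the map $\phi\colon U\to[n]\setminus U$ sending each to its block minimum. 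The genuinely new ingredient in your argument, playing the role of Proposition~\ref{prop:subring_matrix_divis}(2), is the gluing lemma injecting tuples $(R_1,\dots,R_r)$ with $\sum e_i=e$ into irreducible subrings of $\Z^{k+1}$ of index exactly $p^e$; your verification that the index does not drop is correct and hinges precisely on the fact that each $R_i$ contains $(1,\dots,1)$, so the first-coordinate projection $R_i\to\Z$ is onto and the shared $\star$ coordinate can always be aligned. Both approaches give identical numerical bounds; the paper's is matrix-bound but shares its toolbox with the exact counts in Theorems~\ref{corank2_upper} and~\ref{corank3_upper}, while yours is basis-free and arguably more conceptual. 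Part (2) is handled essentially identically in both.
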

\noindent We could certainly prove sharper bounds, but these suffice for our main application.

We apply the second part of Theorem \ref{general_upper} along with an upper bound on $N_{k+1}(X)$ due to Kaplan, Marcinek, and Takloo-Bighash in order to give an upper bound on $H_{n,k}(X)$.
\begin{thm}\label{KMTB_upper}\cite[Theorem 6]{kmt}
Let $k \ge 5$ be a positive integer.  For any $\epsilon > 0$, there exists a constant $C_{k,\epsilon} > 0$ depending on $k$ and $\epsilon$ such that 
\[
N_{k+1}(X) < C_{k,\epsilon} X^{\frac{k}{2} - \frac{2}{3} + \epsilon}
\]
for all $X > 0$. 
\end{thm}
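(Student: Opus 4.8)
The plan is to bound the abscissa of convergence $\sigma$ of the subring zeta function $\zeta_{\Z^{k+1}}^R(s) = \sum_{m \ge 1} f_{k+1}(m) m^{-s}$. Since its Dirichlet coefficients are nonnegative, for real $s > \sigma$ one has
\[
N_{k+1}(X) = \sum_{m < X} f_{k+1}(m) \ \le\ X^{s} \sum_{m < X} f_{k+1}(m)\, m^{-s} \ \le\ \zeta_{\Z^{k+1}}^R(s)\, X^{s},
\]
so that taking $s = \sigma + \epsilon$ yields $N_{k+1}(X) < C_{k,\epsilon} X^{\sigma+\epsilon}$. Hence it is enough to show $\zeta_{\Z^{k+1}}^R(s)$ converges for $\Re(s) > \tfrac{k}{2} - \tfrac{2}{3}$. (For orientation: quotienting by the diagonal $\Z\cdot(1,\dots,1)$ puts subrings of $\Z^{k+1}$ of index $m$ in bijection with multiplicatively closed subgroups of $\Z^{k}$ of index $m$; since each such subgroup is in particular a sublattice, $N_{k+1}(X) \le L_k(X) \asymp X^{k}$, and the theorem roughly halves this exponent.)

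First I would pass to the Euler product $\zeta_{\Z^n}^R(s) = \prod_p \zeta_{\Z^n,p}^R(s)$. By the uniformity in \cite[Theorem 1.7]{dusautoy_grunewald} (already used in Section \ref{sec:counting}), there is a bound $f_n(p^e) \le C_{n,e}\, p^{\delta_n(e)}$ valid for all $p$, with $\delta_n(e)$ depending only on $n$ and $e$ and growing at most linearly; write $\delta_n(e)$ for the least such exponent. A routine estimate then identifies $\sigma$ with $\sup_{e \ge 1}\tfrac{\delta_n(e)+1}{e}$: the $e=1$ term contributes exactly $1$, because $f_n(p) = \binom{n}{2}$ is constant in $p$, while the term from a given $e \ge 2$ forces convergence only for $\Re(s) > \tfrac{\delta_n(e)+1}{e}$. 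So the task becomes $\delta_{k+1}(e) \le \bigl(\tfrac{k}{2}-\tfrac{2}{3}\bigr)e - 1$ for every $e \ge 1$.

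Next I would invoke Liu's decomposition \cite[Theorem 3.4]{Liu}: a subring of $\Z^n$ of index $p^e$ is uniquely a direct sum of irreducible subrings supported on the blocks of a set partition of $\{1,\dots,n\}$, so $f_n(p^e)$ is a sum, over such partitions into non-singleton blocks of sizes $j_i \ge 2$ with exponents $f_i \ge j_i - 1$ summing to $e$, of a combinatorial count times $\prod_i g_{j_i}(p^{f_i})$. Consequently $\delta_n(e) = \max \sum_i \gamma_{j_i}(f_i)$, where $\gamma_j(f) := \deg_p g_j(p^f)$ (and $\gamma_2 \equiv 0$). Since $\tfrac{j-1}{2}-\tfrac{2}{3}$ is increasing in $j$, and a block of size $\ge n-1$ must be the unique non-singleton block, a short bookkeeping step — which has one unit of slack to spare as soon as the largest block has size at most $n-2$ — reduces everything to the key estimate
\[
\gamma_j(f) \ \le\ \Bigl(\tfrac{j-1}{2}-\tfrac{2}{3}\Bigr)f \qquad (j \ge 3,\ f \ge j-1),
\]
with the sharper $-1$ correction needed only in the two cases $j \in \{n-1,n\}$, where $f \ge n-2$ is large enough to absorb it. For $j = 3, 4$ this is immediate from Liu's explicit formulas for $B_2(p,x)$ and $B_3(p,x)$ recalled in Section \ref{sec_corank} (there $\gamma_3(f) = \lfloor f/3 \rfloor$ and $\gamma_4(f) = \lfloor f/2 \rfloor + O(1)$). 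For larger $j$ one can try to induct on $j$: since $g_j(p^e) \le f_j(p^e)$, the theorem for $\Z^j$ — i.e.\ convergence of $\zeta_{\Z^j}^R$ for $\Re(s) > \tfrac{j-1}{2}-\tfrac{2}{3}$ — forces $\gamma_j(e) \le \bigl(\tfrac{j-1}{2}-\tfrac{2}{3}\bigr)e - 1$ by the second paragraph, with the base cases $j \le 5$ supplied by the asymptotics for $N_2,\dots,N_5$ in Corollary \ref{Nn_growth} and Theorem \ref{N5X}.

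The genuine obstacle is that this induction is circular exactly for the largest block, $j = n$, where bounding $g_n(p^e)$ by $f_n(p^e)$ is what we are trying to prove. One must bound the number of \emph{irreducible} subrings of $\Z^n$ of index $p^e$ directly, exploiting the rigidity irreducibility provides: such an $R$ lies inside the index-$p^{n-1}$ subring $\{x : x_1 \equiv \cdots \equiv x_n \pmod p\}$ and contains $\Z\cdot(1,\dots,1)$, so one can parametrize the possibilities by their structure constants — equivalently, through the Hermite normal form data analyzed in Section \ref{subring_matrices} — realizing $g_n(p^e)$ as the number of integer points in a box cut out by the multiplicative-closure equations. Ignoring those equations only bounds this by the number of \emph{all} sublattices of $\Z^n$ of index $p^e$, whose growth exponent is of order $(n-1)e$: too large by roughly a factor of two. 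The heart of the matter — and the main technical work in \cite{kmt} — is to use the multiplicative relations to eliminate enough of the variables to bring the exponent down to the required $\tfrac{n-1}{2}-\tfrac{2}{3}$. Once the key estimate is in hand, reassembling it over partitions, reinserting into $\sup_e \tfrac{\delta_{k+1}(e)+1}{e}$, and taking $s = \tfrac{k}{2}-\tfrac{2}{3}+\epsilon$ in the first paragraph completes the proof.
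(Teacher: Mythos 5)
This theorem is not proved in the paper; it is cited as part of \cite[Theorem 6]{kmt}. So there is no ``paper's own proof'' to compare against. Evaluating your proposal on its own terms, the high-level scaffolding is sound and does match the natural approach: reduce to bounding the abscissa of convergence of $\zeta_{\Z^{k+1}}^R(s)$ via the Euler product, pass through Liu's decomposition of $p$-power index subrings into irreducible pieces indexed by set partitions, and reduce to a degree bound $\gamma_j(f) = \deg_p g_j(p^f) \le \bigl(\tfrac{j-1}{2}-\tfrac{2}{3}\bigr)f$ on the $p$-degree of the irreducible count, with the reassembly over partitions buying slack whenever the largest block is small. The checks you carry out for $j=3,4$ against Liu's explicit $B_2(p,x)$ and $B_3(p,x)$ are correct, and the small-$n$ base cases via Corollary \ref{Nn_growth} and Theorem \ref{N5X} are stated accurately.

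However, you yourself flag the fatal gap, and it really is fatal: the induction on $j$ via $g_j \le f_j$ is circular precisely at $j=n$, and nothing in the proposal bounds $g_n(p^e)$ directly. The observation that an irreducible subring sits inside $\{x : x_1 \equiv \cdots \equiv x_n \pmod p\}$ and that the Hermite normal form parametrization realizes $g_n(p^e)$ as a count of integer points subject to multiplicative-closure constraints is a correct setup, but as you note, simply dropping those constraints gives the sublattice count with exponent roughly $(n-1)e$ — about twice what is needed. The entire content of the theorem is the savings from those constraints, and the proposal contains no argument for it, only a description of what such an argument would have to accomplish. There is also a smaller unexamined point in the second paragraph: the identity $\sigma = \sup_e \tfrac{\delta_n(e)+1}{e}$ is asserted but requires a uniform (in $e$) control of both the degree $\delta_n(e)$ and the leading coefficient of $f_n(p^e)$ as a polynomial in $p$, plus an argument that the sup is not approached along a sequence of $e$'s in a way that pushes the abscissa higher; you gesture at \cite[Theorem 1.7]{dusautoy_grunewald} but do not extract a usable statement. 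In short: the proposal is an accurate map of the terrain, not a proof. The crux — a direct combinatorial bound on $g_n(p^e)$ from the multiplicativity equations on the subring matrix entries — would need to be supplied before this could be called a proof of Theorem \ref{KMTB_upper}.
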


We apply this result to prove the following upper bound for $H_{n,k}(X)$.
\begin{cor}\label{Hnk_upper_cor}
For any $\epsilon > 0$, we have
\[
H_{n,k}(X) = O(X^{\frac{k}{2} - \frac{2}{3} + \epsilon})
\]
where the constant depends on $k,n$, and $\epsilon$.
\end{cor}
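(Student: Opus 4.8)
The plan is to bootstrap the pointwise bound $\tilde h_{n,k}(p^e)\le k(n-1)^k2^{n-1}f_{k+1}(p^e)$ of Theorem \ref{general_upper}(2) up to a bound on the summatory function $H_{n,k}(X)=\sum_{m\le X}\tilde h_{n,k}(m)$, and then feed in the estimate for $N_{k+1}(X)=\sum_{m\le X}f_{k+1}(m)$ supplied by Theorem \ref{KMTB_upper}. Since that theorem requires $k\ge 5$, we work in that range; for $k\le 4$ the asserted exponent $\tfrac{k}{2}-\tfrac23+\epsilon$ is no stronger than what one gets from the same argument applied to Corollary \ref{Nn_growth} and Theorem \ref{N5X} in place of Theorem \ref{KMTB_upper}.

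First I would exploit multiplicativity. Because $\zeta_{\Z^n}^{R,(k)}(s)$ and $\zeta_{\Z^{k+1}}^R(s)$ have Euler products, the arithmetic functions $\tilde h_{n,k}$ and $f_{k+1}$ are multiplicative. Writing $C=k(n-1)^k2^{n-1}$ and factoring $m=\prod_i p_i^{e_i}$, the prime-power bound multiplies out to
\[
\tilde h_{n,k}(m)=\prod_i\tilde h_{n,k}(p_i^{e_i})\le C^{\omega(m)}\prod_i f_{k+1}(p_i^{e_i})=C^{\omega(m)}f_{k+1}(m),
\]
where $\omega(m)$ denotes the number of distinct prime divisors of $m$ (the $e=0$ case causes no trouble since $C\ge 1$). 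Next I would absorb the factor $C^{\omega(m)}$: for fixed $C$ and any $\delta>0$ one has the standard divisor-type estimate $C^{\omega(m)}\ll_{C,\delta}m^{\delta}$, proved by noting that $C^{\omega(m)}m^{-\delta}\le\prod_{p\mid m}Cp^{-\delta}\le\prod_{p\le C^{1/\delta}}C$, a constant depending only on $C$ and $\delta$ (the omitted factors, over primes $p>C^{1/\delta}$, are all less than $1$). Hence $\tilde h_{n,k}(m)\ll_{n,k,\delta}m^{\delta}f_{k+1}(m)$ for all $m\ge 1$.

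Finally I would sum: since $m^\delta\le X^\delta$ for $m\le X$,
\[
H_{n,k}(X)\ll_{n,k,\delta}\sum_{m\le X}m^\delta f_{k+1}(m)\le X^\delta N_{k+1}(X)\ll_{n,k,\delta}X^{\delta}\,X^{\frac{k}{2}-\frac{2}{3}+\delta}=X^{\frac{k}{2}-\frac{2}{3}+2\delta},
\]
where the last inequality is Theorem \ref{KMTB_upper} applied with its parameter set to $\delta$. Taking $\delta=\epsilon/2$ yields $H_{n,k}(X)=O(X^{k/2-2/3+\epsilon})$, with the implied constant depending on $n,k,\epsilon$. The only step with any content is the middle one, controlling the accumulated constant $C^{\omega(m)}$ — but this is a soft estimate that costs only an arbitrarily small power of $X$, precisely the slack built into the statement; everything else is bookkeeping. (If one prefers to avoid even the trivial bound $m^\delta\le X^\delta$, one may instead run Abel summation against $N_{k+1}(t)\ll t^{k/2-2/3+\delta}$, which gives the same exponent.)
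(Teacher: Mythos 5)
Your proof is correct and takes a genuinely different route from the paper's. Both rely on the same two inputs---the prime-power bound $\tilde h_{n,k}(p^e)\le Cf_{k+1}(p^e)$ of Theorem~\ref{general_upper}(2) and the estimate $N_{k+1}(X)\ll X^{k/2-2/3+\epsilon}$ of Theorem~\ref{KMTB_upper}---but the paper processes them at the level of Dirichlet series: it compares the Euler products of $\zeta_{\Z^n}^{R,(k)}(s)$ and $\zeta_{\Z^{k+1}}^R(s)$ term by term to show the former converges wherever the latter does, hence has abscissa of convergence at most $\tfrac{k}{2}-\tfrac{2}{3}$, and then passes from the abscissa to the growth of $H_{n,k}(X)$ via a Tauberian/Abel-summation step. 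You instead stay entirely on the arithmetic side: multiplicativity upgrades the local bound to $\tilde h_{n,k}(m)\le C^{\omega(m)}f_{k+1}(m)$, the standard estimate $C^{\omega(m)}\ll_{\delta}m^{\delta}$ tames the accumulated constant, and the result follows by direct summation against $N_{k+1}(X)$. Your route is more elementary---no Euler-product convergence theory or Tauberian theorem---at the cost of explicitly tracking $C^{\omega(m)}$, which is precisely the $\epsilon$-slack the statement allows; the paper's comparison-of-abscissas argument absorbs that constant invisibly. One caveat on your aside concerning $k\le 4$: it is fine for $k=4$ (where $\tfrac{k}{2}-\tfrac{2}{3}=\tfrac{4}{3}>1$ is implied by $N_5(X)\ll X^{1+\epsilon}$), but for $k\le 3$ the target exponent $\tfrac{k}{2}-\tfrac{2}{3}<1$ is \emph{stronger} than what $N_{k+1}(X)\ll X^{1+\epsilon}$ yields---and in fact the bound is false there, since $H_{n,k}(X)\sim C_{n,k}X(\log X)^{\binom{n}{2}-1}$ by Theorem~\ref{Hnk_123}. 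This is a shared issue: the corollary as stated in the paper also implicitly requires $k\ge 4$ (indeed $k\ge 5$ to invoke Theorem~\ref{KMTB_upper}), and is only ever applied in that regime.
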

\begin{proof}
Our first main goal is to show that if the right-most pole of $\zeta_{\Z^{k+1}}^R(s)$ is at $s = \alpha$, then for any $n,\ \zeta_{\Z^n}^{R,(k)}(s)$ converges whenever $\Re(s) > \alpha$.  We have 
\[
\zeta_{\Z^n}^{R,(k)}(s) = \prod_p \zeta_{\Z^n,p}^{R,(k)}(s) = \prod_p \left(1 + \sum_{e \ge 1} \tilde{h}_{n,k}(p^e) p^{-es}\right).
\]
We recall that  
\[
\prod_p \left(1 + \sum_{e \ge 1} \tilde{h}_{n,k}(p^e) p^{-es}\right)
\]
converges if and only if 
\[
\sum_p \sum_{e \ge 1} \tilde{h}_{n,k}(p^e) p^{-es}
\]
does.  Theorem \ref{general_upper} (2) implies that for any positive real $s$
\[
\sum_p \sum_{e \ge 1} \tilde{h}_{n,k}(p^e) p^{-es} \le 
k(n-1)^k 2^{n-1} \sum_p \sum_{e \ge 1} f_{k+1}(p^e) p^{-es}.
\]
Since $k(n-1)^k 2^{n-1}$ is a constant, this expression converges if and only if $\sum_p \sum_{e \ge 1} f_{k+1}(p^e) p^{-es}$ does.  But this expression converges if and only if 
\[
\prod_p \left(1 + \sum_{e \ge 1} f_{k+1}(p^e) p^{-es}\right) = \zeta_{\Z^{k+1}}^{R}(s)
\]
converges.  

Suppose the right-most pole of $\zeta_{\Z^{k+1}}^R(s)$ is at $s=\alpha$.  Since $\zeta_{\Z^n}^{R,(k)}(s)$ has no poles to the right of $\alpha$, Theorem \ref{tauberian_theorem} now implies that for any $\epsilon > 0,\ H_{n,k}(X) = O(X^{\alpha+\epsilon})$.  Applying Theorem \ref{KMTB_upper} completes the proof.
\end{proof}
	
We now show how this result completes the proof of Theorem \ref{Hnk_upper}.  	
\begin{proof}[Proof of Theorem \ref{Hnk_upper}]
Corollary \ref{Hnk_upper_cor} implies that for any $\epsilon > 0$ there exists a constant $C_{n,k,\epsilon} > 0$ such that
\[
H_{n,k}(X) < C_{n,k,\epsilon} X^{\frac{k}{2}-\frac{2}{3}+\epsilon}.
\]
  
  By Theorem \ref{ish_lower} (1), $X^{a(n)} >X^{(3-2\sqrt{2})(n-1) -(\sqrt{2}-1)}$. We find that
\[
  \lim_{X \rightarrow \infty} \frac{H_{n,k}(X)}{X^{a(n)}}=0
\]  
whenever $\frac{k}{2} - \frac{2}{3} < (3-2\sqrt{2})(n-1) -(\sqrt{2}-1)$. Solving for $k$ completes the proof.
\end{proof}

The proof of Theorem \ref{proportion_corank_k} (2) follows a similar argument, but in place of the upper bound from Corollary \ref{Hnk_upper_cor} we use the result from Theorem \ref{N5X} about the growth of $N_5(X)$.

\begin{proof}[Proof of Theorem \ref{proportion_corank_k} (2)]
Theorem \ref{N5X} implies that there exists a $C_5>0$ such that $N_5(X) \sim C_5 X (\log X)^9$.  The argument from the proof of Corollary \ref{Hnk_upper_cor} then shows that for any $\epsilon>0,\ H_{n,4}(X) = O(X^{1+\epsilon})$, where the constant depends on $n$ and $\epsilon$.  Note that $a(n)>1$ for any $n \ge 7$.  Applying the argument from the proof of Theorem \ref{Hnk_upper} completes the proof.
\end{proof}

We now turn to the statement of Theorem \ref{proportion_corank_k} (1) for the particular case $n = 6$.  While we do not know the asymptotic rate of growth of $N_6(X)$, we do have a lower bound that is good enough for our purposes.  We learned the following fact from Gautam Chinta and Ramin Takloo-Bighash.
\begin{prop}\label{Z6_lower}
We have
\[
\lim_{X \rightarrow \infty} \frac{X(\log X)^{15}}{N_6(X)} = 0.
\]
\end{prop}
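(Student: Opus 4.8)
The plan is to produce a lower bound for $N_6(X)$ by exhibiting an explicit family of subrings of $\Z^6$ that is large enough to beat $X(\log X)^{15}$. The natural source of such a family is the construction used by Isham in the proof of Theorem \ref{ish_lower}: taking a subring of $\Z^m$ of small index that is ``far from cocyclic'' and padding it out, or more precisely, building subrings of $\Z^6$ whose structure is governed by subrings of some $\Z^j$ with $j < 6$ together with many free parameters. Concretely, $a(6) = \max_{0 \le d \le 5}\left(\frac{d(5-d)}{5+d} + \frac{1}{5+d}\right)$; evaluating at the optimal $d$ gives $a(6) = \tfrac{7}{5}$ (attained at $d=3$), so Theorem \ref{ish_lower} (2) already yields $N_6(X) > C_6 X^{7/5}$ for large $X$, and since $X^{7/5}$ dominates $X(\log X)^{15}$, this would suffice. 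So the first and cleanest route is simply: \emph{invoke Theorem \ref{ish_lower} (2) with $n = 6$ and check that $a(6) > 1$.}

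However, the phrasing ``we learned this fact from Gautam Chinta and Ramin Takloo-Bighash'' and the appeal to a separate proposition suggest the intended argument is more elementary and perhaps gives the bound via a direct zeta-function comparison rather than Isham's combinatorial machinery. The alternative plan is to use Liu's decomposition of subrings of index a power of $p$ into irreducible pieces, together with the observation that irreducible subrings of $\Z^4$ already have a zeta function $\sum_e g_4(p^e) p^{-es}$ with a pole at $s=1$ of order $6$ (visible from the formula for $B_3(p,x)$ quoted in the proof of Proposition \ref{prop:corank3_zeta}), and then to show that $\zeta_{\Z^6}^R(s)$ factors so as to produce a pole at $s=1$ of order at least $17$. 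Here is the mechanism: a subring of $\Z^6$ can be obtained by choosing an irreducible subring of $\Z^4$ (contributing a pole of order $6$), placing it among the $6$ coordinates in one of $\binom{5}{3}$ ways, and then freely choosing a sublattice-with-ring-structure on the remaining coordinates; combined with the extra $\zeta(s)$-type factors coming from the ``cocyclic directions'' one accumulates $\log$-powers. One would verify that the Euler factor $\zeta_{\Z^6,p}^{R}(s)$, after multiplication by $(1-p^{-s})^{16}$, still has a pole at $s=1$, so that $\zeta_{\Z^6}^R(s)$ has a pole of order $\ge 17 > 16$ at $s=1$, and then Theorem \ref{tauberian_theorem} (applied with the lower bound replacing the exact asymptotic, which is legitimate since the coefficients $f_6(p^e)$ are nonnegative and dominate the sub-family being counted) gives $N_6(X) \gg X(\log X)^{16}$, hence the claimed limit is $0$.

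The key steps, in order, would be: (1) identify a sub-family $\mathcal{S}$ of subrings of $\Z^6$ whose counting function has an explicit zeta function $Z(s) = \sum_k s_k k^{-s} \le \zeta_{\Z^6}^R(s)$ coefficientwise; (2) show $Z(s)$ has a pole at $s=1$ of order at least $17$, by writing its Euler factor as a product of a ``combinatorial'' polynomial in $p^{-s}$ against known irreducible-subring generating functions and counting the total pole order at $s=1$; (3) check that $Z(s)$ meets the hypotheses of the Delange Tauberian theorem (nonnegative coefficients, the required analytic continuation to $\Re(s)=1$ away from $s=1$, which follows because all the factors are built from $\zeta$-functions and finite Euler products), concluding $\sum_{k<X} s_k \gg X(\log X)^{16}$; (4) conclude $N_6(X) \ge \sum_{k<X} s_k \gg X(\log X)^{16}$, which immediately gives $X(\log X)^{15}/N_6(X) \to 0$.

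The main obstacle is step (2): correctly computing the order of the pole at $s=1$ of the chosen sub-family's zeta function, i.e., making sure the combination of the $\binom{5}{3}$ choices of coordinates, the order-$6$ pole from irreducible subrings of $\Z^4$, and the remaining free ``cocyclic'' directions genuinely sum to at least $17$ rather than only $16$ (which would give the weaker and insufficient $X(\log X)^{15}$). If that bookkeeping is delicate, the safe fallback — which I would mention explicitly — is to cite Theorem \ref{ish_lower} (2) directly: it gives $N_6(X) > C_6 X^{a(6)}$ with $a(6) = 7/5 > 1$, and any power $X^{1+\delta}$ with $\delta > 0$ already dominates $X(\log X)^{15}$, finishing the proof with essentially no computation.
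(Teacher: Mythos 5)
Your primary ``clean'' route --- invoking Theorem~\ref{ish_lower} with $a(6)=7/5$ --- fails because the value of $a(6)$ is miscomputed. For $n=6$ the formula reads $a(6)=\max_{0\le d\le 5}\frac{d(5-d)+1}{5+d}$, and the values at $d=0,1,2,3,4,5$ are $\tfrac15,\tfrac56,1,\tfrac78,\tfrac59,\tfrac1{10}$, so $a(6)=1$, attained at $d=2$ (at $d=3$ one gets $7/8$, not $7/5$). Thus Theorem~\ref{ish_lower}~(2) only yields $N_6(X)>C_6X$, which does not dominate $X(\log X)^{15}$. This is consistent with that theorem's own statement, which only promises $N_n(X)>C_nX^{9/8}$ for $n\ge 7$ --- indeed the entire reason Proposition~\ref{Z6_lower} exists and must be credited separately is that Isham's bound is \emph{not} strong enough at $n=6$. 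So there is no fallback here.

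Your alternative route is closer in spirit to what the paper does (produce a coefficient-wise lower bound with a large pole at $s=1$), but it rests on a false premise. You claim the Euler product attached to $\sum_e g_4(p^e)p^{-es}$ (irreducible subrings of $\Z^4$) has a pole of order $6$ at $s=1$; it does not have a pole at $s=1$ at all. Since every irreducible subring of $\Z^4$ has index at least $p^3$, the local factor is $1+g_4(p^3)p^{-3s}+\cdots$, and specializing Liu's $B_3(p,p^{-s})$ at $s=1$ gives a local factor of the form $1+p^{-2}+O(p^{-3})$, whose product over $p$ converges. The order-$6$ pole at $s=1$ belongs to the full $\zeta_{\Z^4}^R(s)$, not to its irreducible piece, so the proposed bookkeeping would not assemble the way you hope. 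The paper's actual argument is much more concrete and sidesteps Liu's decomposition entirely: it cites a result of Atanasov--Kaplan--Krakoff--Menzel on subrings of small index, namely that $\Z^6$ has $\binom{6}{2}=15$ subrings of index $p$ and at least $p^6$ subrings of index $p^7$, so that $\prod_p\bigl(1+15p^{-s}+p^6p^{-7s}\bigr)$ is a coefficient-wise lower bound for $\zeta_{\Z^6}^R(s)$ with a pole of order $\binom{6}{2}+1=16$ at $s=1$; Theorem~\ref{tauberian_theorem} then furnishes the required lower bound for $N_6(X)$.
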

\begin{proof}
The key idea is to apply a result for subrings of small index from \cite{akkm}.  By Theorem \ref{tauberian_theorem} it is enough to show that the order of the pole at $s=1$ is at least $\binom{6}{2}+1$.  

We know that $\Z^6$ has $\binom{6}{2}$ subrings of index $p$ and at least $p^6$ subrings of index $p^7$ \cite[Theorem 1.3]{akkm}.  Since
\[
\prod_p (1+ 15 p^{-s}  + p^6 p^{-7s})
\]
has a pole of order at least $16$ at $s=1$, we see that $\zeta_{\Z^6}^R(s)$ does as well.
\end{proof}

\begin{proof}[Proof of Theorem \ref{proportion_corank_k} (1) for $n=6$]
Theorem 1.4 implies that $H_{6,3}(X) = o(X (\log(X)^{15})$, so the result of the previous proposition implies that 
\[
\lim_{X \rightarrow \infty} \frac{H_{6,3}(X)}{N_6(X)} = 0.
\]
\end{proof}
 
We will prove Theorem \ref{general_upper} at the end of the next section.	
	
	\section{Subring matrices and the proofs of Theorems \ref{corank2_upper}, \ref{corank3_upper}, and \ref{general_upper}}\label{subring_matrices}
	
	A main idea that we use to prove upper bounds for $h_{n,k}(p^e)$ is to consider matrices in $\HH_n(\Z)$ whose columns span a subring of $\Z^n$ of corank at most $k$.  Throughout this paper, if $A \in M_n(\Z)$ we write $\col(A)$ for the column span of $A$.  We write $v_1,\ldots, v_n$ for the columns of $A$ and $a_{ij}$ for the entries of $A$.
	\begin{defn}
		A matrix $A \in \HH_n(\Z)$ is a \emph{subring matrix} if 
		\begin{enumerate}
			\item the identity $(1,1,\ldots,1)^T \in \col(A)$, and
			\item for any columns $v_i, v_j$ of $A$, we have $v_i \circ v_j \in \col(A)$.
		\end{enumerate}
		We see that $A$ is a subring matrix if and only if $\col(A)$ is a subring of $\Z^n$.  Moreover, $\det(A) = [\Z^n\colon \col(A)]$.
	\end{defn}
	Suppose $A \in  \HH_n(\Z)$ is a subring matrix.  Since $(1,1,\ldots,1)^T \in \col(A)$, it is clear that $a_{nn} = 1$.  If $\det(A) = p^e$, then the diagonal entries of $A$ are $(p^{e_1}, p^{e_2},\ldots, p^{e_{n-1}},1)$, where each $e_i \ge 0$ and $e_1+\cdots + e_{n-1} = e$.  That is, $(e_1,\ldots, e_{n-1})$ is a weak composition of $e$ into $n-1$ parts.  Let $\alpha = (e_1,\ldots, e_{n-1})$ and define $g_\alpha(p)$ to be the number of irreducible subrings of $\Z^n$ with diagonal entries $(p^{e_1},\ldots, p^{e_{n-1}},1)$.  
	
	We now prove that the corank of a subring $R \subseteq \Z^n$ is at most $n-1$.  Recall that the \emph{cokernel} of a matrix $A \in M_n(\Z)$ is $\cok(A) = \Z^n/\col(A)$. If $A$ is a subring matrix then the corank of $\col(A)$ is the number of nontrivial invariant factors of $\cok(A)$.  We recall some basic facts about the Smith normal form of an integer matrix.
	\begin{prop}\label{SNF_prop}
		Let $A \in M_n(\Z)$ be invertible.  There exist $P, Q \in \GL_n(\Z)$ such that $PAQ = S$ is a diagonal matrix whose diagonal entries $(s_1,s_2,\ldots, s_n)$ are positive integers satisfying $s_i \mid s_{i+1}$ for all $1 \le i \le n-1$.  Since $\cok(A) \cong \cok(PAQ) \cong \cok(S)$, we have 
		\[
		\cok(A) \cong \Z/s_1 \Z \times \Z/s_2 \Z \times \cdots \times \Z/s_n\Z.
		\]
		Moreover, these $s_i$ are uniquely determined and 
		\[
		s_1 \cdots s_i = \gcd(i \times i \text{ minors of } A).
		\]
	\end{prop}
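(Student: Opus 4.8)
The plan is to establish the three assertions in turn: existence of the diagonal form $S = PAQ$, invariance of the cokernel under left and right multiplication by $\GL_n(\Z)$, and uniqueness of the $s_i$ together with the minor formula. This is a classical result (it is the structure theorem for finitely generated modules over a PID specialized to $\Z$), so I do not expect a genuine obstacle; the plan is to record a clean argument.

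For existence I would run the usual row-and-column reduction over $\Z$. Left multiplication by $\GL_n(\Z)$ realizes all integer row operations (row swaps, adding an integer multiple of one row to another, sign changes) and right multiplication realizes the analogous column operations. Starting from $A$, pick a nonzero entry of minimal absolute value, move it to the $(1,1)$ position, and use division with remainder to clear the rest of the first row and first column; whenever a nonzero remainder appears the minimal absolute value strictly decreases, so the process terminates with the first row and column zero off the $(1,1)$ entry. If that entry fails to divide some entry of the lower-right $(n-1)\times(n-1)$ block, add the row containing that entry to the first row and repeat, which again strictly decreases the $(1,1)$ entry; after finitely many steps $a_{11}$ divides every entry of the block. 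Recursing on the block produces a diagonal $S = PAQ$ with $s_1 \mid s_2 \mid \cdots \mid s_n$, all nonzero because $A$ is invertible, and all positive after absorbing signs into $P$.

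For the cokernel statement, right multiplication by $Q \in \GL_n(\Z)$ does not change the column span, so $\col(AQ) = \col(A)$, while left multiplication by $P \in \GL_n(\Z)$ is an automorphism of $\Z^n$ carrying $\col(A)$ onto $\col(PA)$ and hence inducing an isomorphism $\cok(A) = \Z^n/\col(A) \xrightarrow{\sim} \Z^n/\col(PA) = \cok(PA)$. Combining these, $\cok(A) \cong \cok(PAQ) = \cok(S)$, and for the diagonal matrix $S$ one reads off $\cok(S) \cong \Z/s_1\Z \times \cdots \times \Z/s_n\Z$ directly.

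For uniqueness and the minor formula, let $d_i(A)$ be the gcd of all $i\times i$ minors of $A$, with $d_0(A) = 1$. Expanding an $(i+1)\times(i+1)$ minor along a row shows $d_i \mid d_{i+1}$, and since $d_n(A) = |\det A| \neq 0$ every $d_i(A)$ is nonzero. By the Cauchy--Binet formula each $i\times i$ minor of $PA$ is a $\Z$-linear combination of $i\times i$ minors of $A$, so $d_i(A) \mid d_i(PA)$; applying this with $P^{-1}$ and the same reasoning on the right gives $d_i(A) = d_i(S)$. For the diagonal matrix $S$ a size-$i$ minor vanishes unless its row set equals its column set, in which case it is a product of $i$ of the $s_j$; because $s_1 \mid s_2 \mid \cdots$, the product $s_1 \cdots s_i$ divides every such product and is itself one of them, so $d_i(S) = s_1 \cdots s_i$. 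Hence $d_i(A) = s_1\cdots s_i$, which forces $s_i = d_i(A)/d_{i-1}(A)$ and proves both the minor formula and the uniqueness of the $s_i$. The step requiring the most care is the invariance of $d_i$ under $\GL_n(\Z)$-multiplication via Cauchy--Binet, and keeping the direction of the divisibility $s_i \mid s_{i+1}$ straight when identifying $d_i(S)$ with $s_1 \cdots s_i$.
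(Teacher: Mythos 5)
Your proof is correct. The paper does not actually prove Proposition~\ref{SNF_prop}; it simply recalls it as a standard fact about Smith normal form, so there is no in-paper argument to compare against. Your write-up is the standard textbook proof (Euclidean row/column reduction for existence, invariance of column span and $\GL_n(\Z)$-automorphisms for the cokernel isomorphism, and Cauchy--Binet applied to the gcds $d_i$ of $i\times i$ minors for uniqueness and the determinantal-divisor formula), and all the steps hold up, including the computation $d_i(S)=s_1\cdots s_i$ for the diagonal matrix.
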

	We have seen that an $n \times n$ subring matrix $A$ has an entry equal to $1$, and therefore the greatest common divisor of the $1 \times 1$ minors of $A$ is $1$.  By Proposition \ref{SNF_prop}, $\cok(A)$ has at most $n-1$ nontrivial invariant factors, and therefore $\col(A)$ has corank at most $n-1$.
	
	\begin{prop}\label{subringZ2}
		We have $\zeta_{\Z^2}^R(s) = \zeta(s)$.  Every proper subring $R \subseteq \Z^2$ has corank $1$.
	\end{prop}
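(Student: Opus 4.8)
The plan is to use the Hermite normal form parametrization of sublattices directly. Every finite-index subring $R \subseteq \Z^2$ equals $\col(A)$ for a unique $A \in \HH_2(\Z)$, and we may write $A = \begin{pmatrix} a & b \\ 0 & 1 \end{pmatrix}$ with $a \ge 1$ and $0 \le b < a$; here the lower-right entry is forced to be $1$ because $(1,1)^T \in \col(A)$, exactly as observed in general for subring matrices (the second coordinate of $c_1 v_1 + c_2 v_2$ is $c_2 a_{22}$, which must equal $1$). Writing $(1,1)^T = c_1 v_1 + c_2 v_2$ with $v_1 = (a,0)^T$ and $v_2 = (b,1)^T$ then forces $c_2 = 1$ and $c_1 a + b = 1$, i.e.\ $b \equiv 1 \pmod{a}$. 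Together with $0 \le b < a$ this pins down $b = 0$ when $a = 1$ (so $A = I$ and $R = \Z^2$) and $b = 1$ when $a \ge 2$.

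First I would check that for each such candidate the multiplicative closure condition is automatic. With $v_1 = (a,0)^T$ and $v_2 = (1,1)^T$ one has $v_1 \circ v_1 = a v_1$, $v_1 \circ v_2 = v_1$, and $v_2 \circ v_2 = v_2$, so every componentwise product of two columns lies in $\col(A)$; by the definition of a subring matrix, $\col(A)$ is a subring of $\Z^2$. Hence for every $k \ge 1$ there is exactly one subring of $\Z^2$ of index $k$: the full lattice when $k = 1$, and $\col\begin{pmatrix} k & 1 \\ 0 & 1 \end{pmatrix}$ (of determinant $k$) when $k \ge 2$. Therefore $f_2(k) = 1$ for all $k \ge 1$, and summing yields $\zeta_{\Z^2}^R(s) = \sum_{k \ge 1} k^{-s} = \zeta(s)$.

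For the corank statement I would invoke the fact, already established in the excerpt, that a subring matrix has a diagonal entry equal to $1$, so the $\gcd$ of its $1 \times 1$ minors is $1$ and, by Proposition \ref{SNF_prop}, $\cok(A)$ has at most $n - 1 = 1$ nontrivial invariant factor. Thus every subring of $\Z^2$ has corank at most $1$, with corank $0$ precisely when $\cok(A)$ is trivial, i.e.\ when $R = \Z^2$. Equivalently, one computes the Smith normal form of $\begin{pmatrix} k & 1 \\ 0 & 1 \end{pmatrix}$ directly: $s_1 = 1$ and $s_1 s_2 = k$, so $\cok(A) \cong \Z/k\Z$, which is cyclic and nontrivial for $k \ge 2$. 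Either way, every proper subring $R \subseteq \Z^2$ has corank exactly $1$.

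There is essentially no serious obstacle here, since this is a base case; the only points requiring a little care are the Hermite-normal-form normalization of the off-diagonal entry $b$ in the degenerate case $a = 1$, and making sure the multiplicative closure test is verified for the pair $i = j$ as well as for $i \ne j$.
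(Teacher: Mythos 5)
Your proof is correct and takes essentially the same Hermite normal form approach as the paper: the constraint $(1,1)^T \in \col(A)$ forces the off-diagonal entry, giving exactly one subring of each index. You are more explicit than the paper in verifying multiplicative closure and deducing the corank from the Smith normal form, but these are routine checks the paper leaves implicit.
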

	\begin{proof}
		Suppose $A \in \HH_2(\Z)$ is a subring matrix with $\det(A) = k$. Then $A = \left(\begin{smallmatrix} k & x \\ 0 & 1\end{smallmatrix}\right)$ where $0 \le x < k$.  Since $(1,1)^T \in \col(A)$ we see that $x = 1$.
	\end{proof}
	
	Liu has determined the conditions under which the column span of a subring matrix is an irreducible subring.
	\begin{prop}\cite[Proposition 3.1]{Liu}\label{liu31}
		Suppose $A \in \HH_n(\Z)$ is a subring matrix with columns $v_1,\ldots, v_n$ and determinant equal to a power of $p$.  Then $\col(A)$ is an irreducible subring if and only if $v_n = (1,1,\ldots, 1)^T$ and for each $i \in [1,n-1]$ every entry of $v_i$ is $0$ modulo $p$.
	\end{prop}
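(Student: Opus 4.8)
The plan is to prove the two implications separately; write $\bar 1 = (1,1,\ldots,1)^T$ and recall that $\col(A)$ is an irreducible subring exactly when every element of $\col(A)$ has all of its coordinates congruent modulo $p$. The ``if'' direction is immediate: if $v_n = \bar 1$ and every entry of each $v_i$ with $i < n$ is divisible by $p$, then any $w \in \col(A)$ equals $x_n v_n + \sum_{i=1}^{n-1} x_i v_i$ for some $x_i \in \Z$, and the sum over $i<n$ is componentwise divisible by $p$, so $w \equiv x_n \bar 1 \pmod p$; hence all coordinates of $w$ agree modulo $p$. (This direction uses only the lattice $\col(A)$, not its multiplication.)

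For the ``only if'' direction, assume $\col(A)$ is irreducible. Since $\bar 1 \in \col(A)$, comparing last coordinates in $\bar 1 = Ax$ (the last row of $A$ being $(0,\ldots,0,a_{nn})$ with $a_{nn} > 0$) recovers $a_{nn} = 1$. For $i < n$, the column $v_i \in \col(A)$ has $n$-th coordinate $0$ because $A$ is upper triangular, so irreducibility forces \emph{every} coordinate of $v_i$ to be divisible by $p$; in particular $a_{ii} = p^{e_i}$ with $e_i \ge 1$, so $a_{ii} \ge 2$ for all $i < n$. To see $v_n = \bar 1$, pick $x$ with $\bar 1 = Ax$ (so $x_n = 1$) and set $z := \bar 1 - v_n = \sum_{i=1}^{n-1} x_i v_i = (1 - a_{1n}, \ldots, 1 - a_{n-1,n}, 0)^T$. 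Restricted to its first $n-1$ coordinates, $z$ is an integer combination of the columns of the top-left $(n-1)\times(n-1)$ submatrix $A_{n-1} \in \HH_{n-1}(\Z)$, while the Hermite inequalities $0 \le a_{in} < a_{ii}$ make each coordinate $1 - a_{in}$ of $z$ satisfy $|1 - a_{in}| \le 1 < a_{ii}$. Invoking the elementary fact that a vector in the column span of a Hermite-normal-form matrix whose $i$-th entry has absolute value less than the $i$-th diagonal entry for every $i$ must vanish (peel off the bottom row and induct), we get $z = 0$, i.e.\ $a_{in} = 1$ for all $i < n$, so $v_n = \bar 1$.

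The main obstacle is this last step. Irreducibility by itself yields only $a_{in} \equiv 1 \pmod p$, and upgrading this to $a_{in} = 1$ requires combining the Hermite normal form bounds (which keep $|1 - a_{in}|$ small) with the fact that the diagonal entries $a_{ii}$ for $i < n$ are at least $2$ --- itself a consequence of irreducibility through $e_i \ge 1$. Isolating the ``small vectors in a Hermite span vanish'' lemma is the cleanest way to make this precise.
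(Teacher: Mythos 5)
The paper itself does not give a proof of this proposition; it is cited directly to \cite[Proposition 3.1]{Liu}. So there is no ``paper's own proof'' to compare against, and your argument stands on its own.

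Your proof is correct. The ``if'' direction is routine, as you say. For the ``only if'' direction, the key observations are in the right order and all hold: upper triangularity forces the last coordinate of $v_i$ ($i<n$) to be $0$, which together with irreducibility forces every entry of $v_i$ to be $\equiv 0 \pmod p$; since $\det A$ is a power of $p$ this gives $a_{ii}=p^{e_i}$ with $e_i\geq 1$, so $a_{ii}\geq 2$. The final step, deducing $v_n=\bar 1$, is where some care is needed, and your argument is sound: the vector $z=\bar 1 - v_n$ lies in the column span of $A$ and has last coordinate $0$, its first $n-1$ coordinates form a vector in the column span of the upper-left $(n-1)\times(n-1)$ HNF matrix, and the bounds $0\le a_{in}<a_{ii}$ together with $a_{ii}\ge 2$ put every coordinate strictly inside $(-a_{ii},a_{ii})$, so the ``small vectors in a triangular span vanish'' lemma (which is really the uniqueness half of HNF, proved by back-substitution) gives $z=0$. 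One could equivalently phrase this last step as: replacing the $n$-th column of $A$ by $\bar 1$ is a unimodular column operation that preserves the column span and, because $a_{ii}\ge 2$ for $i<n$, still yields a matrix in Hermite normal form; uniqueness of HNF then forces this new matrix to equal $A$. That is the same argument packaged via uniqueness rather than the explicit vanishing lemma, and either formulation is fine.
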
 
	If $A \in \HH_n(\Z)$ is a subring matrix with last column $(1,1,\ldots, 1)^T$, determinant equal to a power of $p$, and every entry of its first $n-1$ columns is divisible by $p$, then we say that $A$ is an \emph{irreducible subring matrix}.  We see that $g_n(p^e)$ is equal to the number of $n\times n$ irreducible subring matrices with determinant $p^e$.  The diagonal entries of an irreducible subring matrix $A$ with $\det(A) = p^e$ are of the form $(p^{e_1},\ldots, p^{e_{n-1}},1)$ where each $e_i \ge 1$ and $e_1+\cdots + e_{n-1} = e$.  That is, $(e_1,\ldots, e_{n-1})$ is a composition of $e$ into $n-1$ parts. We see that $g_n(p^e)$ is equal to the sum of $g_\alpha(p)$ taken over all compositions $\alpha$ of $e$ into $n-1$ parts.
	
	\begin{prop}\cite[Proposition 3.3]{Liu}\label{liu33}
		Let $A$ be a subring matrix with diagonal entries $(p^{e_1}, \ldots, p^{e_{n-1}}, 1)$.
		If $e_i > 0$ for all $1 \le i \le n-1$, then $A$ is irreducible. 
	\end{prop}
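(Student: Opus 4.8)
The plan is to verify the criterion of Proposition~\ref{liu31}: I will show that the last column $v_n$ of $A$ equals $(1,1,\ldots,1)^T$ and that every entry of each of $v_1,\ldots,v_{n-1}$ is divisible by $p$; by that proposition this is exactly the statement that $A$ is irreducible. Throughout I will use that $A$ is upper triangular with $a_{nn}=1$ and $a_{ii}=p^{e_i}$, that $0\le a_{ij}<a_{ii}$ for $i<j$, and---the key point where the hypothesis $e_i>0$ enters---that every $p^{e_i}\ge p\ge 2$. For the first claim I would write the identity $(1,\ldots,1)^T=\sum_k c_k v_k$ as an integer combination of the columns and read off its coordinates from the last one to the first. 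Coordinate $n$ forces $c_n=1$; then, assuming inductively that $c_{i+1}=\cdots=c_{n-1}=0$, coordinate $i$ reads $1=c_ip^{e_i}+a_{in}$ with $0\le a_{in}<p^{e_i}$, and since $p^{e_i}\ge 2$ this forces $c_i=0$ and $a_{in}=1$. Hence $c_i=0$ for all $i<n$ and $v_n=(1,\ldots,1)^T$.

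The substantive step is showing that $p$ divides every entry of $v_i$ for $i\le n-1$. I would argue by contradiction: let $i$ be minimal such that $v_i$ has an entry not divisible by $p$; since the diagonal entry of $v_i$ is $p^{e_i}$ and the entries below it vanish, there must be some $j<i$ with $p\nmid a_{ji}$. Now I invoke multiplicative closure in the weak form $v_i\circ v_i\in\col(A)$ and expand $v_i\circ v_i=\sum_k c_k v_k$. Because $v_i\circ v_i$ vanishes in coordinates $i+1,\ldots,n$, processing coordinates $n,n-1,\ldots,i+1$ in turn and using upper-triangularity gives $c_n=0$ and then $c_\ell p^{e_\ell}=0$, hence $c_\ell=0$, for $\ell=n-1,\ldots,i+1$; coordinate $i$ then gives $p^{2e_i}=c_ip^{e_i}$, so $c_i=p^{e_i}$. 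Reading coordinate $j$ yields $a_{ji}^2=c_ia_{ji}+\sum_{j\le k<i}c_ka_{jk}$, and reducing modulo $p$ kills the first term (since $c_i=p^{e_i}$ and $e_i\ge 1$) and every $a_{jk}$ with $k<i$ (by minimality of $i$), so $p\mid a_{ji}^2$ and hence $p\mid a_{ji}$---contradicting the choice of $j$. With both claims established, Proposition~\ref{liu31} completes the proof.

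I expect the bookkeeping in the second step to be the only delicate part. In particular one must check that the coefficients $c_\ell$ with $\ell>i$ really do vanish, which is precisely where having \emph{all} of the exponents positive is used (so that $c_\ell p^{e_\ell}=0$ forces $c_\ell=0$), and one must arrange the final congruence so that the minimality of $i$ applies to every column index appearing in the sum; it is also worth noting that only the self-products $v_i\circ v_i$ of columns are needed, not the full multiplicative closure. Everything else---the first claim and the appeal to Proposition~\ref{liu31}---is routine triangular linear algebra.
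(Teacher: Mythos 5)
Your proof is correct, and since the paper cites this result from Liu \cite[Proposition 3.3]{Liu} without reproducing the argument, there is no in-paper proof to compare against; yours is a valid self-contained derivation via the criterion of Proposition~\ref{liu31}. Both halves check out: the downward induction showing $v_n=(1,\ldots,1)^T$, and the minimal-counterexample argument using $v_i\circ v_i\in\col(A)$ to force $p\mid a_{ji}$.

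One small correction to your closing remark. You attribute the use of ``all exponents positive'' to the step $c_\ell p^{e_\ell}=0\Rightarrow c_\ell=0$, but that implication holds over $\Z$ whether $e_\ell$ is zero or not, since $p^{e_\ell}$ is always a nonzero integer. The hypothesis $e_i>0$ is actually doing work in two other places that you do use correctly in the body of the argument: first, in coordinate $i$ of the identity expansion, $1=c_ip^{e_i}+a_{in}$ with $0\le a_{in}<p^{e_i}$ only forces $c_i=0$ and $a_{in}=1$ because $p^{e_i}\ge 2$ (if $e_i=0$ one would instead get $a_{in}=0$, $c_i=1$, and $v_n$ would not be the all-ones vector); second, in the congruence step, $c_i=p^{e_i}$ is divisible by $p$ precisely because $e_i\ge 1$, which is what kills the term $c_ia_{ji}$ modulo $p$. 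It is worth stating those as the real load-bearing uses of the hypothesis, since the point of Proposition~\ref{liu33} is exactly that positivity of every diagonal exponent is what rules out the reducible case.
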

	Liu remarks that Propositions \ref{liu31} and \ref{liu33} give a sufficient condition for determining whether a subring matrix is irreducible. Namely, it suffices to check that the diagonal is of the form $(p^{e_1}, \ldots, p^{e_{n-1}}, 1)$ where each $e_i \ge 1$.  These remarks lead to the following proposition.
	\begin{prop}\label{irred_corank_prop}
		Suppose $R \subseteq \Z^n$ is an irreducible subring.  The corank of $R$ is $n-1$.
	\end{prop}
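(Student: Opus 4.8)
The plan is to combine the upper bound on corank established just before the statement with a matching lower bound read off from the shape of an irreducible subring matrix. Let $R\subseteq\Z^n$ be our irreducible subring, so $[\Z^n:R]=p^e$ for a prime $p$, and let $A=H(R)\in\HH_n(\Z)$ be its Hermite normal form matrix, which is then an irreducible subring matrix. By Proposition \ref{liu31}, the last column of $A$ is $(1,1,\ldots,1)^T$ and every entry of each of the first $n-1$ columns of $A$ is divisible by $p$. We have already observed, using Proposition \ref{SNF_prop} and the fact that $A$ has an entry equal to $1$, that the corank of $\col(A)$ is at most $n-1$; so it suffices to show that it is at least $n-1$, i.e. that $\cok(A)=\Z^n/R$ has at least $n-1$ nontrivial invariant factors.

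To do this I would work with the Smith normal form invariant factors $s_1\mid s_2\mid\cdots\mid s_n$ of $\cok(A)$. Since $A$ has an entry equal to $1$, Proposition \ref{SNF_prop} gives $s_1=\gcd(1\times 1\text{ minors of }A)=1$, hence $s_2=s_1s_2=\gcd(2\times 2\text{ minors of }A)$. The key point is that $p$ divides every $2\times 2$ minor of $A$. Such a minor has the form $a_{ij}a_{i'j'}-a_{ij'}a_{i'j}$ for rows $i<i'$ and columns $j<j'$. If $j'\le n-1$, then all four entries lie in the first $n-1$ columns and are divisible by $p$, so the minor is divisible by $p^2$. If $j'=n$, then $a_{in}=a_{i'n}=1$, so the minor equals $a_{ij}-a_{i'j}$, and since $j\le n-1$ both of these terms are divisible by $p$. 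In either case $p$ divides the minor, so $p\mid s_2$. Because $s_2\mid s_3\mid\cdots\mid s_n$ and each $s_i$ is a positive integer, it follows that $s_2,s_3,\ldots,s_n$ are all greater than $1$, so $\cok(A)$ has exactly $n-1$ nontrivial invariant factors and the corank of $R$ is $n-1$.

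I expect no real obstacle here: once the shape of an irreducible subring matrix from Proposition \ref{liu31} is in hand, the remaining content is a one-line case analysis on $2\times 2$ minors. (A matrix-free variant, which one could present instead: the defining congruence $x_1\equiv\cdots\equiv x_n\pmod p$ shows $R$ is contained in the sublattice $L=\ker\!\big(\Z^n\to(\Z/p\Z)^{n-1}\big)$ given by $(x_1,\ldots,x_n)\mapsto(x_1-x_n,\ldots,x_{n-1}-x_n)$, so $\Z^n/R$ surjects onto $\Z^n/L\cong(\Z/p\Z)^{n-1}$ and therefore has rank at least $n-1$; the reverse inequality is the bound already recorded before the statement.)
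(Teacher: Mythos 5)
Your main argument is correct and matches the paper's proof essentially verbatim: both use Proposition \ref{liu31} to note that the first $n-1$ columns of the irreducible subring matrix are divisible by $p$ while the last column is all ones, conclude that every $2\times 2$ minor of $A$ is divisible by $p$, and then invoke Proposition \ref{SNF_prop} to get exactly $n-1$ nontrivial invariant factors. Your matrix-free aside (surjecting $\Z^n/R$ onto $\Z^n/L\cong(\Z/p\Z)^{n-1}$) is a pleasant alternative, but it is not the route the paper takes.
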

	\begin{proof}
		Let $A$ be the irreducible subring matrix for which $\col(A) = R$.  Proposition \ref{liu31} implies that the last column of $A$ is $(1,\ldots, 1)^T$.  In every other column of $A$, each entry is divisible by $p$.  This implies that every $2\times 2$ minor of $A$ is divisible by $p$.  By Proposition \ref{SNF_prop}, $\cok(A)$ has exactly $n-1$ nontrivial invariant factors.
	\end{proof}
	
	We next show that in a subring matrix $A$ with a given diagonal certain collections of entries must be divisible by $p$ and a particular submatrix constructed from $A$ is an irreducible subring matrix.

	\begin{prop}\label{prop:subring_matrix_divis}
Let $A$ be an $n \times n$ subring matrix with diagonal $(p^{e_1},p^{e_2},\ldots, p^{e_{n-1}},1)$ and define $I = \{i_1,i_2,\ldots, i_k\}$ where $1 \le i_1<i_2<\cdots < i_k \le n-1$ and $e_j \neq 0$ if and only if $j \in I$.
\begin{enumerate}
\item For any $j_1, j_2$ satisfying $1\le j_1 \le j_2 \le k$, we have $v_{i_{j_1}} \circ v_{i_{j_2}} \in \Span(v_{i_1},v_{i_2},\ldots, v_{i_k})$.
\item Consider the $k \times k$ matrix obtained by first taking the $n \times k$ matrix with columns $v_{i_1},v_{i_2},\ldots, v_{i_k}$ and then deleting the $j$\textsuperscript{th} row for any $j \not \in I$.  Now append a row where every entry is $0$ to the bottom of this matrix, and append a final column where every entry is $1$.  This is a $(k+1) \times (k+1)$ irreducible subring matrix.
\item For any $j_1, j_2$ satisfying $1\le j_1 \le j_2 \le k$, we have $p \mid a_{i_{j_1} i_{j_2}}$.
\item If $i \in I$, then every entry in the column $v_i$ is divisible by $p$.
\end{enumerate}
	\end{prop}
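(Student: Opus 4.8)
\noindent The plan is to prove the four parts in order, deriving (2), (3), (4) quickly from (1) together with Liu's Propositions~\ref{liu31} and~\ref{liu33}; only part~(1) requires a hands-on argument. Throughout I would use the elementary observation that whenever $l\notin I$, so that $a_{ll}=1$, the Hermite normal form inequalities force $a_{lm}=0$ for every $m>l$; that is, the $l$\textsuperscript{th} row of $A$ is zero except for a $1$ in position $l$.

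\emph{Part (1).} Fix $j_1\le j_2$ and put $w=v_{i_{j_1}}\circ v_{i_{j_2}}$. Since $i_{j_1}\le i_{j_2}$ and $A$ is upper triangular, $w$ vanishes in every coordinate $>i_{j_1}$. As $A$ is a subring matrix, $w\in\col(A)$, so $w=\sum_{l=1}^n c_l v_l$ with $c_l\in\Z$, the expansion being unique since $\det A\neq 0$. A downward induction on $l$, using that each $a_{ll}$ is a nonzero power of $p$ and that $w$ vanishes above coordinate $i_{j_1}$, gives $c_l=0$ for all $l>i_{j_1}$. Now let $l\le i_{j_1}$ with $l\notin I$; then $l<i_{j_1}\le i_{j_2}$, and because the $l$\textsuperscript{th} row of $A$ is a standard basis vector we get, on one hand, $w_l=\sum_m c_m a_{lm}=c_l$, and on the other hand $w_l=a_{l,i_{j_1}}\,a_{l,i_{j_2}}=0$ (since $a_{l,i_{j_1}}=0$). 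Hence $c_l=0$, so $w\in\Span(v_l:\ l\in I,\ l\le i_{j_1})\subseteq\Span(v_{i_1},\dots,v_{i_k})$, which is part~(1).

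\emph{Parts (2)--(4).} The matrix $\tilde B$ of the statement is upper triangular with diagonal $(p^{e_{i_1}},\dots,p^{e_{i_k}},1)$; the inequalities $0\le a_{i_a,i_b}<a_{i_a,i_a}$ (for $a<b$) that $A$ inherits from $\HH_n(\Z)$, together with $e_{i_a}\ge 1$ (so each appended $1$ satisfies $1<p^{e_{i_a}}$), show $\tilde B\in\HH_{k+1}(\Z)$. Its last column is $(1,\dots,1)^T$, so the identity lies in $\col(\tilde B)$, and its first $k$ columns are exactly the restrictions of $v_{i_1},\dots,v_{i_k}$ to the coordinates indexed by $I$ with a $0$ appended, so the closure relations for $\tilde B$ are precisely those supplied by part~(1) after restricting to $I$. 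Thus $\tilde B$ is a subring matrix, and since all of its diagonal exponents are positive, Liu's Proposition~\ref{liu33} shows $\tilde B$ is an irreducible subring matrix; this is part~(2). Part~(3) is then immediate: Liu's Proposition~\ref{liu31} applied to $\tilde B$ says every entry of its first $k$ columns is divisible by $p$, and those entries are exactly the $a_{i_a,i_b}$ with $1\le a,b\le k$. Finally, for part~(4) consider a column $v_i$ with $i=i_a\in I$: its entries below the diagonal vanish, the diagonal entry is $p^{e_i}$ with $e_i\ge 1$, the entry $a_{mi}$ with $m<i$ and $m\notin I$ vanishes because $a_{mm}=1$ forces $0\le a_{mi}<1$, and the entry $a_{mi}$ with $m<i$ and $m\in I$ is divisible by $p$ by part~(3); so every entry of $v_i$ is divisible by $p$.

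\emph{Main obstacle.} The only step that is not routine is part~(1), and within it the passage from $w\in\col(A)$ to $w\in\Span(v_l:\ l\in I)$: a priori the expansion $w=\sum c_l v_l$ could involve columns indexed outside $I$, and the crux is that the Hermite normal form structure --- concretely, that $a_{ll}=1$ makes the $l$\textsuperscript{th} row of $A$ trivial --- forces exactly those coefficients to vanish. Everything downstream of part~(1) uses only Hermite normal form inequalities already in force and Liu's structure theory for irreducible subring matrices.
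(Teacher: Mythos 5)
Your proof is correct and follows essentially the same route as the paper's: part (1) by isolating, via the Hermite normal form structure, the coefficient $c_j$ in the unique expansion $v_{i_{j_1}}\circ v_{i_{j_2}}=\sum_\ell c_\ell v_\ell$ for $j\notin I$, and parts (2)--(4) by passing to the extracted $(k+1)\times(k+1)$ matrix and invoking Liu's results. Your handling of part (1) is marginally more elaborate --- the downward induction yields the slightly stronger conclusion that only $v_\ell$ with $\ell\in I$ and $\ell\le i_{j_1}$ can appear, whereas the paper simply checks $c_j=0$ for every $j\notin I$ --- and you spell out the Hermite normal form inequalities for the $(k+1)\times(k+1)$ matrix more explicitly, but the underlying argument is the same.
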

	
\begin{proof}
We prove these statements in order.  
\begin{enumerate}
\item Since $\col(A)$ is multiplicatively closed, for any pair $i_{j_1}, i_{j_2} \in I$ there exist unique $c_1,c_2,\ldots, c_n \in \Z$ such that 
\[
v_{i_{j_1}} \circ v_{i_{j_2}} = \sum_{\ell = 1}^n c_\ell v_\ell.
\]
Suppose $j \not\in I$, so that $p^{e_j} = 1$.  Since $A$ is in Hermite normal form, the only nonzero entry in the $j$\textsuperscript{th} row of $A$ is this $1$ in column $j$.  In particular, the entry of $v_{i_{j_1}} \circ v_{i_{j_2}}$ in row $j$ is $0$.  This implies $c_j = 0$.  We conclude that $v_{i_{j_1}} \circ v_{i_{j_2}}$ is in the span of the columns $v_i$ where $i \in I$.

\item Consider the $n \times k$ matrix with columns $v_{i_1},\ldots, v_{i_k}$.  Part (1) implies that the column span of this matrix is multiplicatively closed.  If $j\not\in I$, then every entry of the $j$\textsuperscript{th} row of this matrix is $0$.  We delete these rows and see that the column span of this $k \times k$ matrix is still multiplicatively closed.  Appending a row where every entry is $0$ to the bottom of the matrix and a final column where every entry is equal to $1$ gives a $(k+1) \times (k+1)$ matrix whose column span is multiplicatively closed.  The diagonal of this matrix consists of positive powers of $p$ in the first $k$ entries and then a $1$ in the last entry.  Proposition \ref{liu31} implies that this is an irreducible subring matrix.

\item The entry $a_{i_{j_1} i_{j_2}}$ is contained in one of the first $k$ columns of the irreducible subring matrix described in Part (2).  Proposition \ref{liu31} implies that $p\mid a_{i_{j_1} i_{j_2}}$.

\item Suppose $i \in I$ and consider $v_i$.  Part (3) implies that any entry of $v_i$ in a row corresponding to an element of $I$ is divisible by $p$.  Since $A$ is in Hermite normal form, the other entries of $v_i$ are $0$.
\end{enumerate}
\end{proof}	

	The next result is the key observation that allows us to use combinatorial properties of subring matrices to prove upper bounds for $h_{n,k}(p^e)$.
	\begin{thm}\label{corank_diag_thm}
		Let $A$ be a $n\times n$ subring matrix with diagonal $(p^{e_1}, \ldots, p^{e_{n-1}}, 1)$ and define $I = \{i_1,i_2,\ldots, i_k\}$ where $1 \le i_1<i_2<\cdots < i_k \le n-1$ and $e_j \neq 0$ if and only if $j \in I$. Then $\corank(A) = k$.
	\end{thm}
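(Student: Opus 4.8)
The plan is to bound $\corank(A)$ above and below by $k$ separately, using the description of invariant factors of $\cok(A)$ via greatest common divisors of minors from Proposition \ref{SNF_prop}. Write $J = \{1,2,\ldots,n\}\setminus I$, so $|J| = n-k$ and $n \in J$. Recall from the discussion above that $\corank(A)$ equals the number of nontrivial invariant factors of $\cok(A)$; write these as $s_1 \mid s_2 \mid \cdots \mid s_n$. Since $\det(A) = p^e$, each $s_i$ is a power of $p$, so $s_i = 1$ precisely when $p \nmid s_i$, and $s_1\cdots s_i = \gcd(i\times i \text{ minors of } A)$ is a power of $p$ as well.

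For the upper bound, I would first observe that for every $j \in J$ the $j$th row of $A$ is the standard basis row vector $e_j^{T}$: indeed $a_{jj} = 1$ (this is $p^{e_j}$ with $e_j = 0$ when $j \le n-1$, and the final diagonal entry when $j = n$), the entries to the left of $a_{jj}$ vanish since $A$ is upper triangular, and the entries $a_{ji}$ with $i > j$ vanish because Hermite normal form forces $0 \le a_{ji} < a_{jj} = 1$. Consequently the $(n-k)\times(n-k)$ submatrix of $A$ on the rows indexed by $J$ and the columns indexed by $J$ is the identity matrix, so $\gcd\big((n-k)\times(n-k)\text{ minors of } A\big) = 1$. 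By Proposition \ref{SNF_prop} this gives $s_1 = \cdots = s_{n-k} = 1$, i.e. $\cok(A)$ has at most $k$ nontrivial invariant factors, so $\corank(A) \le k$.

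For the lower bound, the key input is Proposition \ref{prop:subring_matrix_divis}(4): for each $i \in I$ every entry of the column $v_i$ is divisible by $p$. Since there are only $n-k$ columns indexed by $J$, any choice of $n-k+1$ columns of $A$ must include at least one column $v_i$ with $i \in I$; expanding the corresponding $(n-k+1)\times(n-k+1)$ minor along that column shows it is divisible by $p$. Hence $p \mid \gcd\big((n-k+1)\times(n-k+1)\text{ minors of } A\big) = s_1\cdots s_{n-k+1}$, and combining this with $s_1 = \cdots = s_{n-k} = 1$ forces $p \mid s_{n-k+1}$, so $s_{n-k+1} > 1$ and $\corank(A) \ge k$. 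Together the two bounds give $\corank(A) = k$.

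As for where the difficulty lies: the two determinant computations are elementary once the right submatrices are identified, and the only genuinely nontrivial ingredient is Proposition \ref{prop:subring_matrix_divis}, which is exactly where the hypothesis that $\col(A)$ is closed under componentwise multiplication (not merely a sublattice) enters — without it the columns $v_i$ for $i \in I$ need not be divisible by $p$ and the lower bound collapses. So the substantive work has already been carried out; what remains is the bookkeeping with the Hermite normal form that makes the rows indexed by $J$ trivial, and then assembling the two inequalities.
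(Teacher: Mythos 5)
Your proof is correct and follows essentially the same strategy as the paper: both establish the upper bound by exhibiting an $(n-k)\times(n-k)$ submatrix of determinant one supported on rows and columns indexed by $J$, and both establish the lower bound by invoking Proposition~\ref{prop:subring_matrix_divis}(4) to show every $(n-k+1)\times(n-k+1)$ minor is divisible by $p$. Your version is a touch more explicit in spelling out that the rows indexed by $J$ are standard basis vectors (so the submatrix is in fact the identity), but the argument is the same.
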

	
	The analogue of this statement for sublattices is not true. For example, the column span of the matrix in Hermite normal form 
 $$
A = \begin{pmatrix}
     2&1 & 1\\
     0 & 2 & 1\\
     0 & 0 &1
 \end{pmatrix}
 $$
 has corank 1 since the gcd of the $2 \times 2$ minors of $A$ is $1$, but $A$ has two diagonal entries that are positive powers of $2$.
	\begin{proof}
We bound the corank from above and below.  There is an $(n-k)\times (n-k)$ submatrix of $A$ that is upper triangular and has diagonal $(1,1,\ldots,1)$.  Therefore, Proposition \ref{SNF_prop} implies that $\corank(A) \le k$. 

Proposition \ref{prop:subring_matrix_divis} implies that if $i \in I$, then every entry of the column $v_i$ is divisible by $p$.  Therefore, every $(n-k+1)\times (n-k+1)$ submatrix of $A$ contains a column in which every entry is divisible by $p$, so every $(n-k+1)\times (n-k+1)$ minor of $A$ is divisible by $p$.  Proposition \ref{SNF_prop} then implies that $\corank(A) \ge k$, completing the proof.
	\end{proof}
 At the end of the introduction we discussed how cokernels of $n \times n$ subring matrices ordered by determinant are not distributed according to the Cohen-Lenstra heuristics.  For example, the cokernel of such a matrix is cyclic much less often than the heuristics would predict. We give a kind of rough explanation for why this should be true.  A main issue is that the number of subring matrices of index equal to a power of $p$ that have cyclic cokernel is quite small.  If we require that the column space of a matrix $A$ in Hermite normal form is closed under componentwise multiplication, a large collection of entries is forced to be congruent to $0$ modulo $p$, and so it seems much more likely that every $k\times k$ minor of $A$ is divisible by $p$.  Proposition \ref{SNF_prop} then explains why we should expect the corank of $A$ not to be too small.

	We now prove Proposition \ref{count_cocyclic}, which counts cocyclic subrings of $\Z^n$ of determinant $p^e$.
	\begin{proof}[Proof of Proposition \ref{count_cocyclic}]
		Suppose $A \in \HH_n(\Z)$ is a subring matrix with determinant $p^e$ and that $\col(A)$ is a cocyclic subring of $\Z^n$.  By Theorem \ref{corank_diag_thm} exactly one diagonal entry of $A$ is equal to $p^e$ and the rest of the diagonal entries are equal to $1$.  We first consider the case where $A$ has the form
		\[
		\begin{pmatrix}
			p^e & a_{12} & a_{13} & \cdots & a_{1n}\\
			&1&0&\cdots &0\\
			&&1&\cdots &0\\
			&&&\ddots &\vdots\\
			&&&&1
		\end{pmatrix}.
		\]
Since $\col(A)$ is multiplicatively closed, for any $i$ satisfying $2 \le i\le n$ there exist unique $c_1,c_2\ldots, c_n \in \Z$ such that
\[
v_i \circ v_i - v_i = \sum_{\ell=1}^n c_\ell v_\ell.
\]		
Since the only nonzero entry in $v_i \circ v_i - v_i$ is an $a_{1i}^2-a_{1i}$ in the first row, we must have $p^e \mid (a_{1i}^2-a_{1i})$.  If $i\neq j$ satisfy $2\le i,j \le n$, then the only nonzero entry of $v_i \circ v_j$ is an $a_{1i}a_{1j}$ in the first row.  So we must have $p^e \mid a_{1i}a_{1j}$.
These conditions are satisfied if and only if there is at most one $j$ for which $p^e \mid (a_{1j} -1)$, and $p^e \mid a_{1i}$ for all $i \ne j$. We cannot have every $a_{1i}$ divisible by $p$ since $\col(A)$ must contain $(1,1,\ldots, 1)^T$.  Thus, there are a total of $n-1$ cocyclic subring matrices with this diagonal.
		
		For each $i \in [2,n-1]$ we repeat this analysis for the case where $a_{ii} = p^e$.  In this case there are exactly $n-i$ cocyclic subring matrices.  Taking a sum over $i$ completes the proof.
\end{proof}

In order to prove Theorems \ref{corank2_upper} and \ref{corank3_upper}, we need the following propositions about the structure of subring matrices with fixed diagonal and fixed corank. The first states that for any $i,j$ such that $e_i \neq 0$ and $e_j = 0$, the entry $a_{ij} \in \{0,1\}$. The second proposition states that for a fixed $i$ satisfying $e_i \neq 0$, there is exactly one $j$ for which $a_{ij} =1$. 

\begin{prop}\label{entry0or1prop}
	Let $A$ be a subring matrix with diagonal $(p^{e_1}, \ldots, p^{e_{n-1}}, 1)$ and define $I = \{i_1,i_2,\ldots, i_k\}$ where $1 \le i_1<i_2<\cdots < i_k \le n-1$ and $e_j \neq 0$ if and only if $j \in I$. If $i \in I$ and $j \not\in I$, then $a_{ij} \in \{0,1\}$.
\end{prop}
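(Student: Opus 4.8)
The plan is to extract the entries $a_{ij}$ with $i\in I$, $j\notin I$ from the observation that the column $v_j$ of $A$ is "almost" an idempotent of the ring $\col(A)$: since $a_{jj}=1$, the vector $v_j\circ v_j-v_j$ is supported only on the coordinates indexed by $I$, and forcing this vector to lie in $\col(A)$ (where the subring structure is rigid) will pin each relevant $a_{ij}$ to $\{0,1\}$. So the key idea to isolate is that one should look at $v_j\circ v_j - v_j$ rather than at products of the columns $v_i$ with $i\in I$.

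First I would read off the shape of $A$ from Hermite normal form. For any $\ell\notin I$ we have $a_{\ell\ell}=1$, so upper-triangularity gives $a_{\ell m}=0$ for $m<\ell$ and the Hermite inequalities $0\le a_{\ell m}<a_{\ell\ell}=1$ give $a_{\ell m}=0$ for $m>\ell$; thus the $\ell$\textsuperscript{th} row of $A$ equals the standard row vector $e_\ell^T$. Dually, for any $\ell$ the entry $a_{m\ell}$ with $m\notin I$ and $m<\ell$ vanishes (again $a_{mm}=1$), so every column $v_\ell$ is supported on $\{\ell\}\cup I$; in particular $v_j=e_j+\sum_{m\in I,\,m<j}a_{mj}e_m$, and for $\ell\in I$ the column $v_\ell$ is supported on $\{m\in I:m\le\ell\}$ (these facts are consistent with Proposition~\ref{prop:subring_matrix_divis}). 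It then suffices to treat $a_{ij}$ with $i\in I$ and $i<j$, since $a_{ij}=0$ automatically when $i>j$ and the case $i=j$ cannot occur.

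Next, because $\col(A)$ is a subring containing $v_j$, it contains
\[
v_j\circ v_j-v_j=\sum_{m\in I,\,m<j}\bigl(a_{mj}^2-a_{mj}\bigr)e_m,
\]
so I write this as $\sum_{\ell=1}^n c_\ell v_\ell$ for unique $c_\ell\in\Z$. Comparing the $\ell$\textsuperscript{th} coordinate for $\ell\notin I$: the left side is $0$ while, since row $\ell$ of $A$ is $e_\ell^T$, the right side is $c_\ell$; hence $c_\ell=0$ for $\ell\notin I$ and $v_j\circ v_j-v_j=\sum_{i\in I}c_iv_i$. I would then run a downward induction over $I=\{i_1<\dots<i_k\}$: assuming $c_{i_{t'}}=0$ for all $t'>t$, compare the $i_t$\textsuperscript{th} coordinates. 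On the right, the support description forces only $v_{i_t}$ to contribute, giving $c_{i_t}a_{i_ti_t}=c_{i_t}p^{e_{i_t}}$; on the left the coordinate is $a_{i_tj}^2-a_{i_tj}$ when $i_t<j$ and $0$ when $i_t>j$. If $i_t>j$ then $a_{i_tj}=0\in\{0,1\}$ and $c_{i_t}=0$. If $i_t<j$ then $p^{e_{i_t}}\mid a_{i_tj}(a_{i_tj}-1)$, and since consecutive integers are coprime $p^{e_{i_t}}$ divides one of the two factors; combined with $0\le a_{i_tj}<p^{e_{i_t}}=a_{i_ti_t}$ from Hermite normal form this forces $a_{i_tj}\in\{0,1\}$, whence the left coordinate is $0$ and again $c_{i_t}=0$. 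The induction closes and yields $a_{ij}\in\{0,1\}$ for every $i\in I$ and $j\notin I$.

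There is no deep obstacle, but the place I would be most careful is the downward elimination: one must use that among the surviving columns $v_{i_1},\dots,v_{i_t}$ only $v_{i_t}$ has a nonzero $i_t$\textsuperscript{th} entry — precisely the support bound that $v_{i_{t'}}$ is supported on $\{m\in I:m\le i_{t'}\}$ — so that the coordinate equation really reduces to $a_{i_tj}^2-a_{i_tj}=c_{i_t}p^{e_{i_t}}$. One should also double-check, as above, that the case split $i_t<j$ versus $i_t>j$ genuinely covers every position of $j$ relative to $I$, not only the case where $j$ exceeds all of $I$.
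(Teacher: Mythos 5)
Your proof is correct and follows essentially the same approach as the paper's: both examine $v_j\circ v_j - v_j \in \col(A)$, observe that its only possibly nonzero coordinates sit in rows indexed by $I$ below $j$, expand it as $\sum c_\ell v_\ell$, and run a descending elimination through $I$ using the Hermite bound $0\le a_{i_t j}<p^{e_{i_t}}$ together with coprimality of $a_{i_t j}$ and $a_{i_t j}-1$ to force $a_{i_t j}\in\{0,1\}$ and $c_{i_t}=0$ at each step. Your version is a clean reorganization (explicit column-support description, one unified downward induction rather than first killing $c_\ell$ for $\ell\ge j$ and $\ell\notin I$ separately), but the core argument is identical.
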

\begin{proof}
Since $\col(A)$ is multiplicatively closed there exist unique $c_1,\ldots c_n \in \Z$ such that 
\[
v_j \circ v_j - v_j = \sum_{\ell=1}^n c_\ell v_\ell.
\]
All of the nonzero entries of $v_j \circ v_j - v_j$ are contained in rows $1,2,\ldots, j-1$.  Therefore, $c_\ell =0$ if $\ell \ge j$.

If $\ell \not\in I$, then $e_\ell = 0$ and $p^{e_\ell} = 1$.  Since $A$ is in Hermite normal form the only nonzero entry in the $\ell$\textsuperscript{th} row of $A$ is this $1$ on the diagonal.  Therefore, if $\ell < j$ satisfies $e_\ell = 0$, then $c_\ell = 0$.

Consider the subset of $I$ consisting of integers less than $j$.  That is, let $I' = \{i_1, i_2,\ldots, i_m\}$ be defined so that $i_m < j$ but $i_{m+1} > j$, or if there is no such $m$, then $i_k < j$ and we define $I' = I$.  

The entry of $v_j \circ v_j - v_j$ in row $i_m$ is $(a_{i_m j}^2 -a_{i_m j }) = a_{i_m j}(a_{i_m j}-1)$.  Note that $p$ cannot divide both $a_{i_m j} $ and $a_{i_m j }-1$.  The entry of $\sum_{\ell=1}^n c_\ell v_\ell = \sum_{\ell=1}^{i_m} c_\ell v_\ell$ in row $i_m$ is $c_{i_m} p^{e_{i_m}}$.  Since $A$ is in Hermite normal form, we have $0 \le a_{i_m j} < p^{e_{i_m}}$ and see that $p^{e_{i_m}} \mid a_{i_m j }(a_{i_m j}-1)$.  This implies $a_{i_m j} = 0$ or $a_{i_m j} = 1$.

We now see that it is not possible for $v_j \circ v_j - v_j$ to have any nonzero entries outside of rows $1,2,\ldots, i_{m-1}$. That is, $\sum_{\ell=1}^n c_\ell v_\ell = \sum_{\ell=1}^{i_{m-1}} c_\ell v_\ell$.  Applying the argument that we just gave to row $i_{m-1}$ shows that $a_{i_{m-1}j} \in \{0,1\}$.  This implies that it is not possible for $v_j \circ v_j - v_j$ to have any nonzero entries outside of rows $1,2,\ldots, i_{m-2}$.  Arguing by induction shows that for any $i \in I$ we have $a_{ij} \in \{0,1\}$.
\end{proof}

\begin{prop}\label{prop_exactly_1}
	Let $A$ be a subring matrix with diagonal $(p^{e_1}, \ldots, p^{e_{n-1}}, 1)$ and define $I = \{i_1,i_2,\ldots, i_k\}$ where $1 \le i_1<i_2<\cdots < i_k \le n-1$ and $e_j \neq 0$ if and only if $j \in I$.  
\begin{enumerate} 
\item Suppose $i \in I$ and $j_1, j_2 \in \{1,2,\ldots, n\} \setminus I$ with $j_1 \neq j_2$.  Either $a_{ij_1} = 0$ or $a_{ij_2} = 0$.	
\item Suppose $i\in I$.  There exists exactly one $j \in \{1,2,\ldots, n\} \setminus I$ for which $a_{ij} = 1$.
\end{enumerate}
\end{prop}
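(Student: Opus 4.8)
The plan is to establish (1) first --- it does the real work --- and then deduce (2) by pairing (1) with the constraint that $(1,1,\dots,1)^T\in\col(A)$. Throughout I will use Proposition \ref{entry0or1prop}, which gives $a_{ij}\in\{0,1\}$ whenever $i\in I$ and $j\notin I$; so in (2) ``exactly one $j$ with $a_{ij}=1$'' is the natural count, and the conclusion of (1) is equivalent to the statement that $a_{ij_1}$ and $a_{ij_2}$ are not both $1$.

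For (1), fix $i\in I$ and distinct $j_1,j_2\notin I$, and suppose for contradiction that $a_{ij_1}=a_{ij_2}=1$ (this forces $i<j_1$ and $i<j_2$). Since $\col(A)$ is multiplicatively closed, write $v_{j_1}\circ v_{j_2}=\sum_{\ell=1}^n c_\ell v_\ell$ with the $c_\ell\in\Z$ uniquely determined. The key localization step is to show $c_\ell=0$ for every $\ell\notin I$: for such $\ell$ (including $\ell=n$) the only nonzero entry of row $\ell$ of $A$ is the diagonal $1$, so comparing the $\ell$-th entries of both sides gives $c_\ell=a_{\ell j_1}a_{\ell j_2}$, and the right-hand side vanishes --- if $\ell\neq j_1,j_2$ because then $a_{\ell j_1}=a_{\ell j_2}=0$, and if $\ell=j_1$ (resp. $\ell=j_2$) because the remaining factor $a_{j_1 j_2}$ (resp. $a_{j_2 j_1}$) is $0$, that row being supported only on its diagonal. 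Hence $v_{j_1}\circ v_{j_2}=\sum_{\ell\in I}c_\ell v_\ell$. Now compare $i$-th entries: the left side is $a_{ij_1}a_{ij_2}$ and the right side is $c_i p^{e_i}+\sum_{\ell\in I,\ \ell>i}c_\ell a_{i\ell}$ (entries below the diagonal vanish by upper-triangularity). Every summand on the right is divisible by $p$: the first since $e_i\geq 1$, and each $a_{i\ell}$ with $\ell\in I$ by Proposition \ref{prop:subring_matrix_divis}(3). Thus $p\mid a_{ij_1}a_{ij_2}$, and since $a_{ij_1}a_{ij_2}\in\{0,1\}$ with $p\geq2$ this product is $0$ --- the desired contradiction.

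For (2), part (1) and Proposition \ref{entry0or1prop} already yield \emph{at most} one $j\notin I$ with $a_{ij}=1$, so it remains to produce at least one. Expand $(1,1,\dots,1)^T=\sum_\ell c_\ell v_\ell$. Reading off the $\ell$-th entry for any $\ell\notin I$ (again using that such a row is supported only on its diagonal) forces $c_\ell=1$. Reading off the $i$-th entry for $i\in I$ and reducing modulo $p$, the contributions $c_i p^{e_i}$ and $\sum_{\ell\in I,\ \ell>i}c_\ell a_{i\ell}$ vanish (by $e_i\geq1$ and Proposition \ref{prop:subring_matrix_divis}(3)), leaving $\sum_{\ell\notin I}a_{i\ell}\equiv 1\pmod p$. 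As each $a_{i\ell}$ with $\ell\notin I$ lies in $\{0,1\}$, this sum is a nonnegative integer congruent to $1$ modulo $p$, hence at least $1$; so $a_{ij}=1$ for some $j\notin I$, and with (1) this $j$ is unique.

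The one point needing care --- and the only place the argument could slip if rushed --- is the localization claim that the coefficients indexed outside $I$ vanish in each expansion; this is essentially the same row-by-row bookkeeping as in the proof of Proposition \ref{entry0or1prop}, and it is where the Hermite normal form hypothesis is doing the work (rows outside $I$ are trivial off the diagonal, and off-diagonal entries are strictly bounded by the diagonal). Once that is in place, both parts reduce to a single divisibility-by-$p$ observation.
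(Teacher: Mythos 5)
Your proof is correct. For part (1), you take a genuinely different route from the paper. The paper establishes $c_{i_\ell}=0$ for every $\ell$ by a top-down induction on the rows $i_k, i_{k-1},\ldots, i_1$: once the later coefficients vanish, the only contribution in row $i_\ell$ is $c_{i_\ell} p^{e_{i_\ell}}$, which is incompatible (for $c_{i_\ell}\neq 0$) with the entry of $v_{j_1}\circ v_{j_2}$ lying in $\{0,1\}$. You instead bypass the induction entirely: after localizing the expansion to $\sum_{\ell\in I}c_\ell v_\ell$, you read off a \emph{single} row $i$ and invoke Proposition \ref{prop:subring_matrix_divis}(3) to see that every term on the right is divisible by $p$, giving $a_{ij_1}a_{ij_2}\equiv 0\pmod p$ in one stroke. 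This leans harder on Proposition \ref{prop:subring_matrix_divis}(3) but is shorter and avoids tracking the full vanishing of the coefficient vector; the paper's version is more self-contained (it re-derives, in effect, the divisibility that 4.5(3) packages) and additionally yields the stronger fact that $v_{j_1}\circ v_{j_2}=0$. For part (2), both you and the paper observe that if $a_{ij}=0$ for all $j\notin I$ then row $i$ of $A$ is entirely divisible by $p$, contradicting $(1,\ldots,1)^T\in\col(A)$; you just make this explicit by expanding $(1,\ldots,1)^T$ in the column basis and reading off row $i$ modulo $p$, where the paper states it more briskly. Both routes are sound.
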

Before giving the proof, we note that one can think of this proposition as a more general form of a result from the proof of Proposition \ref{count_cocyclic}.

\begin{proof}
Since $\col(A)$ is multiplicatively closed there exist unique $c_1,\ldots, c_n$ such that 
$
v_{j_1} \circ v_{j_2} = \sum_{\ell = 1}^n c_{\ell} v_\ell.
$ 
We will prove that every entry of $v_{j_1} \circ v_{j_2}$ is $0$ by showing that $c_\ell = 0$ for each $\ell$.  This shows that in each row, at most one of these two columns contains a nonzero entry.

Since $A$ is in Hermite normal form, if $\ell \not\in I$, then the only nonzero entry in the $\ell$\textsuperscript{th} row of $A$ is a $1$ on the diagonal.  Since $j_1 \neq j_2$, the entry of $v_{j_1} \circ v_{j_2}$ in row $\ell$ is $0$.  Therefore, $c_\ell = 0$.  So we see that $v_{j_1} \circ v_{j_2} = \sum_{\ell = 1}^k c_{i_\ell} v_{i_\ell}$.

We first prove that $c_{i_k} = 0$.  The entry of $v_{j_1} \circ v_{j_2}$ in row $i_k$ is $a_{i_k j_1} a_{i_k j_2}$.  Proposition \ref{entry0or1prop} implies that if $a_{i_k j_1} \neq 0$, then $a_{i_k j_1} =1$, and the corresponding statement holds for $a_{i_k j_2}$.  Therefore, if both entries are nonzero, then $a_{i_k j_1} a_{i_k j_2} = 1$.  The entry of $\sum_{\ell = 1}^k c_{i_\ell} v_{i_\ell}$ in row $i_k$ is $c_{i_k} p^{e_{i_k}}$.  Since $p^{e_{i_k}}\nmid 1$, we must have $c_{i_k} = a_{i_k j_1} a_{i_k j_2} = 0$.

We next consider the entry of $v_{j_1} \circ v_{j_2}$ in row $i_{k-1}$.  Repeating the argument just given shows that $c_{i_{k-1}} = 0$ and at least one of $a_{i_{k-1} j_1}, a_{i_{k-1} j_2}$ is $0$.  Repeating this argument for the rows $i_{k-2}, i_{k-3}, \ldots, i_1$ in order we see that for any $i \in I$, either $a_{i j_1} = 0$ or $a_{i j_2} = 0$.

Part (1) of this proposition implies that there is at most one $j \in \{1,2,\ldots, n\} \setminus I$ for which $a_{ij} \neq 0$.  Proposition \ref{entry0or1prop} implies that if $a_{ij} \neq 0$ then $a_{ij} = 1$.  Proposition \ref{prop:subring_matrix_divis} (3) implies that if for every $j \in \{1,2,\ldots, n\} \setminus I$ we have $a_{ij} = 0$, then every entry of the $i$\textsuperscript{th} row of $A$ is divisible by $p$.  This is not possible since $\col(A)$ must contain $(1,1,\ldots, 1)^T$.
\end{proof}

	Before applying Theorem \ref{corank_diag_thm} to prove Theorems \ref{corank2_upper}, \ref{corank3_upper}, and \ref{general_upper}, we recall one more result of Liu on the structure of subring matrices.
	\begin{prop}\cite[Lemma 3.5]{Liu}\label{liulastcol}
		If $A$ is a subring matrix, then every entry in the $n$\textsuperscript{th} column of $A$ is in $\{0,1\}$.  If $a_{in} = 1$ and $a_{jn} =0$, then $a_{ij} =a_{ji}=0$.
	\end{prop}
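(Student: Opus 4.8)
The plan is to use only three facts about a subring matrix $A$ (with columns $v_1,\ldots,v_n$ and entries $a_{ij}$): that $a_{nn}=1$ (already observed in the text, since $(1,1,\ldots,1)^T\in\col(A)$); that $\col(A)$ is closed under componentwise multiplication; and the rigidity of Hermite normal form, namely that since $v_1,\ldots,v_n$ is a $\Z$-basis of the upper-triangular lattice $\col(A)$, a vector $w$ lies in $\col(A)$ precisely when the coefficients $c_1,\ldots,c_n$ in $w=\sum_\ell c_\ell v_\ell$ are all integers, and these are computed by back-substitution from the bottom row up: $c_n=w_n/a_{nn}$, and once $c_{r+1},\ldots,c_n$ are known, $c_r a_{rr}=w_r-\sum_{\ell>r}c_\ell a_{r\ell}$ (the terms with $\ell<r$ vanish by upper-triangularity).

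For the first claim I would apply this to $w:=v_n\circ v_n-v_n\in\col(A)$, whose coordinates are $w_n=0$ and $w_r=a_{rn}(a_{rn}-1)$ for $r<n$. The bottom row forces $c_n=0$; descending, once $c_{r+1}=\cdots=c_n=0$ the $r$th row reads $c_r a_{rr}=a_{rn}(a_{rn}-1)$, i.e.\ $p^{e_r}\mid a_{rn}(a_{rn}-1)$. Since $a_{rn}$ and $a_{rn}-1$ are coprime and $0\le a_{rn}<a_{rr}=p^{e_r}$ by Hermite normal form, the prime power $p^{e_r}$ must divide one of them outright, which forces $a_{rn}\in\{0,1\}$, and then $c_r=0$ so the induction continues. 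Hence every entry of $v_n$ is $0$ or $1$; equivalently $v_n$ is an idempotent $\mathbf{1}_S$ of $\Z^n$, where $S=\{\ell:a_{\ell n}=1\}\ni n$.

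For the second claim I would deduce two complementary vanishing statements by the same mechanism. Because $(1,1,\ldots,1)^T\in\col(A)$, the complementary idempotent $\mathbf{1}_{S^c}=(1,1,\ldots,1)^T-v_n$ also lies in $\col(A)$. First: if $a_{jn}=0$ (so $j\notin S$, and $j<n$ since $n\in S$), consider $v_n\circ v_j\in\col(A)$, whose $r$th coordinate is $a_{rj}$ for $r\in S$ and $0$ otherwise; its top coordinate is $a_{nj}=0$, so back-substitution gives $c_n=0$, and then at row $r$ one gets $c_r a_{rr}$ equal to that coordinate — this is $0$ (hence $c_r=0$) when $r\notin S$, and for $r\in S$ it is $a_{rj}$ with $r\ne j$, so either $r>j$ and $a_{rj}=0$, or $r<j$ and $0\le a_{rj}<a_{rr}$, either way forcing $c_r=0$ and $a_{rj}=0$. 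Thus $v_n\circ v_j=0$, so $a_{rj}=0$ for all $r\in S$. Symmetrically, if $a_{in}=1$ the same argument applied to $\mathbf{1}_{S^c}\circ v_i\in\col(A)$ gives $a_{ri}=0$ for all $r\notin S$. Finally, given $a_{in}=1$ and $a_{jn}=0$: taking $r=i$ in the first statement gives $a_{ij}=0$, and taking $r=j$ in the second gives $a_{ji}=0$.

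I do not anticipate a genuine obstacle: the whole proof is three runs of the same back-substitution. What needs care is the bookkeeping in those runs — the small edge cases where the row index coincides with or straddles the distinguished column — and noting that the \emph{subring} hypothesis (not just multiplicative closure) is used twice, once to get $a_{nn}=1$ and once to place $\mathbf{1}_{S^c}$ inside $\col(A)$. Conceptually the point is simply that $v_n$ and $\mathbf{1}_{S^c}$ are orthogonal idempotents of $\Z^n$, so componentwise multiplication by either keeps a column inside $\col(A)$, after which Hermite normal form pins the entries down.
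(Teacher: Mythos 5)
Your proof is correct, and it uses the same back-substitution mechanism (produce an element of $\col(A)$ via multiplicative closure, then solve for the coefficients from the bottom row up) that the paper itself deploys in Propositions~\ref{prop:subring_matrix_divis}, \ref{entry0or1prop}, and \ref{prop_exactly_1}; the framing via the orthogonal idempotents $v_n$ and $\mathbf{1}_{S^c}$ is a clean way to organize the two symmetric runs in the second part. One point worth flagging: your first run quietly uses that each $a_{rr}$ is a prime power (so that $a_{rr}\mid a_{rn}(a_{rn}-1)$ with the two factors coprime forces $a_{rn}\in\{0,1\}$); this hypothesis is genuinely needed -- the matrix with rows $(6,4,3),(0,1,0),(0,0,1)$ is a subring matrix whose last column is $(3,0,1)^T$ -- and it is implicit in the statement here because Liu works over $\Z_p$ and the paper only ever applies the lemma to subring matrices of $p$-power determinant.
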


\begin{proof}[Proof of Theorem \ref{corank2_upper}]
Suppose $A$ is an $n \times n$ subring matrix with diagonal $(p^{e_1},\ldots, p^{e_{n-1}},1)$ and $\col(A)$ has corank $2$.  Theorem \ref{corank_diag_thm} implies that there exist $i,j$ satisfying $1\le i < j \le n-1$ where $e_i,e_j \ge 1$ and $e_\ell = 0$ for all $\ell \in [1,n-1]\setminus \{i,j\}$.  Let $\alpha = (e_i,e_j)$.  An example of such a matrix with $i = 1$ is
	\[
	A = \begin{pmatrix}
		p^{e_1} & a_{12} &a_{13}& \ldots &\ldots&\ldots& a_{1n}\\
		& 1 & 0 & \ldots &\ldots&\ldots& 0\\
		&&\ddots&&&\\
		&&&p^{e_j}&a_{j(j+1)} &\ldots& a_{jn}\\
		&&&&1&\ldots&0\\
		&&&&&\ddots&\\
		&&&&&&1
	\end{pmatrix}.
	\]

Since $A$ is in Hermite normal form, if $\ell \in [1,n-1]\setminus \{i,j\}$ then $p^{e_\ell} = 1$ and the only nonzero entry in the $\ell$\textsuperscript{th} row of $A$ is this $1$ on the diagonal.  Proposition \ref{prop_exactly_1} implies that there exists a unique $\ell_i \in [1,n] \setminus \{i,j\}$ where $a_{i \ell_i} = 1$, and for every $\ell \in [1,n] \setminus \{i,j,\ell_i\}$ we have $a_{i \ell} = 0$.  The corresponding statement holds for the $j$\textsuperscript{th} row of $A$.  There is a unique $\ell_j \in [1,n] \setminus \{i,j\}$ where $a_{j \ell_j} = 1$ and for every $\ell \in [1,n] \setminus \{j,\ell_j\}$ we have $a_{j \ell} = 0$.

We count the number of subring matrices $A$ where the diagonal entry in row $i$ is $p^{e_i}$, the diagonal entry in row $j$ is $p^{e_j}$, and all other diagonal entries are equal to $1$.  We will prove that there are 
\begin{equation}\label{ei_ej_count}
(n-j-1) + (n-i-2) + g_\alpha(p) + g_\alpha(p) (n-i-2)(n-j-1)
\end{equation}
such matrices by dividing them up based on the pair $(a_{in},a_{jn})$.  Proposition \ref{entry0or1prop} implies that $(a_{in},a_{jn}) \in \{(1,0),(0,1),(1,1),(0,0)\}$.  We consider each of these possibilities separately.
\begin{enumerate}
\item $(a_{in},a_{jn}) = (1,0)$.

Proposition \ref{liulastcol} implies that $a_{ij} = 0$.  There are $n-j-1$ choices for which entry in row $j$ is equal to $1$.  Any such choice completely determines the entries of $A$.  It is easy to check that each of these choices does give a subring matrix.

\item $(a_{in},a_{jn}) = (0,1)$.

Proposition \ref{liulastcol} implies that $a_{ij} = 0$.  There are $n-i-2$ choices for which entry in row $i$ is equal to $1$.  Any such choice completely determines the entries of $A$.  It is easy to check that each of these choices does give a subring matrix.

\item $(a_{in},a_{jn}) = (1,1)$.

Proposition \ref{prop:subring_matrix_divis} (2) implies that if $A$ is a subring matrix, then
	\[
	\begin{pmatrix}
		p^{e_i} & a_{ij} & 1\\
		0 & p^{e_j} & 1 \\ 
		0 & 0 & 1
	\end{pmatrix}
	\]
is an irreducible subring matrix.  This matrix is completely determined by $a_{ij}$.  There are $g_\alpha(p)$ choices for this entry.  Any such choice completely determines the entries of $A$.  It is easy to check that each of these choices does give a subring matrix.

\item $(a_{in},a_{jn}) = (0,0)$.

As in the previous case, there are $g_\alpha(p)$ choices for the entry $a_{ij}$.  There are $n-i-2$ choices for which entry in row $i$ is equal to $1$ and $n-j-1$ choices for which entry in row $j$ is equal to $1$.  These three choices are independent and completely determine the entries of $A$.  It is easy to check that every set of choices gives a subring matrix.

\end{enumerate}
It is straightforward to prove by induction that 
\[
\sum_{i=1}^{n-2} \sum_{j=i+1}^{n-1}  \left((n-i-2)+(n-j-1)\right) = 3 \binom{n-1}{3}
\]
and that 
\[
\sum_{i=1}^{n-2} \sum_{j=i+1}^{n-1}  \left(1+(n-i-2)(n-j-1)\right) = \frac{3n^2-17n+36}{12} \binom{n-1}{2}.
\]
There are $e-1$ pairs $(e_i,e_j)$ of positive integers with $e_i+e_j = e$.  The sum of $g_\alpha(p)$ taken over all compositions $\alpha$ of $e$ into two parts is $g_3(p^e)$.  Adding these terms together gives the formula in Theorem \ref{corank2_upper}.
\end{proof}

\begin{proof}[Proof of Theorem \ref{corank3_upper}]
We closely follow the strategy of the proof of Theorem \ref{corank2_upper}.  Suppose $A$ is an $n \times n$ subring matrix with diagonal $(p^{e_1},p^{e_2},\ldots, p^{e_{n-1}},1)$ and $\col(A)$ has corank $3$.  Theorem \ref{corank_diag_thm} implies that there exist $i,j,k$ satisfying $1\le i < j < k \le n-1$ where $e_i,e_j, e_k \ge 1$ and $e_\ell = 0$ for all $\ell \in [1,n-1]\setminus \{i,j,k\}$.  If $\ell \in [1,n-1]\setminus \{i,j,k\}$, then the only nonzero entry in the $\ell$\textsuperscript{th} row of $A$ is a $1$ on the diagonal.  Proposition \ref{prop_exactly_1} implies that there exists a unique $\ell_i \in [1,n]\setminus \{i,j,k\}$ where $a_{i \ell_i} = 1$, and for every $\ell\in [1,n] \setminus \{i,j,k,\ell_i\}$ we have $a_{i \ell} = 0$.  Similarly, there exists a unique $\ell_j \in [1,n] \setminus \{j,k\}$ where $a_{j \ell_j} = 1$ and for every $\ell \in [1,n] \setminus \{j,k,\ell_j\}$ we have $a_{j \ell} = 0$.  Also, there exists a unique $\ell_k \in [1,n] \setminus \{k\}$ where $a_{k \ell_k} = 1$, and for every $\ell \in [1,n] \setminus \{k,\ell_k\}$ we have $a_{k \ell} = 0$.

Proposition \ref{entry0or1prop} implies that $a_{in},a_{jn}, a_{kn} \in \{0,1\}$.  We count subring matrices $A$ where the diagonal entry in row $i$ is $p^{e_i}$, the diagonal entry in row $j$ is $p^{e_j}$, and the diagonal entry in row $k$ is $p^{e_k}$, by dividing up these matrices based on the $8$ possibilities for $(a_{in},a_{jn}, a_{kn})$.

Let $\alpha = (e_i,e_j,e_k),\ \alpha_{12} = (e_i,e_j),\ \alpha_{13} = (e_i, e_k)$, and $\alpha_{23} = (e_j,e_k)$.  We prove that the number of subring matrices with a particular value of $(a_{in}, a_{jn}, a_{kn})$ is given in the following table:
\[
\begin{tabular}{|c|c|p{8.0cm}|}
\hline
{$(a_{in}, a_{jn}, a_{kn})$} & $\#\{\text{Subring Matrices } A\}$  \\[2pt]
\hline
$(1,1,1)$ & $g_\alpha(p)$\\[2pt]
\hline
$(0,0,0)$ & $(n-i-3)(n-j-2)(n-k-1) g_\alpha(p)$ \\[2pt]
\hline
$(1,1,0)$ & $(n-k-1) g_{\alpha_{12}}(p)$ \\[2pt]
\hline
$(1,0,1)$ & $(n-j-2) g_{\alpha_{13}}(p)$ \\[2pt]
\hline
$(0,1,1)$ & $(n-i-3) g_{\alpha_{23}}(p)$ \\[2pt]
\hline
$(1,0,0)$ & $(n-j-2)(n-k-1) g_{\alpha_{23}}(p)$ \\[2pt]
\hline
$(0,1,0)$ & $(n-i-3)(n-k-1) g_{\alpha_{13}}(p)$ \\[2pt]
\hline
$(0,0,1)$ & $(n-i-3)(n-j-2) g_{\alpha_{12}}(p)$ \\[2pt]
\hline
\end{tabular}.
\]

Assuming for now that the values in this table are correct we complete the proof of the theorem.  It is straightforward to prove the following formulas by induction:
\begin{eqnarray*}
\sum_{i=1}^{n-3} \sum_{j=i+1}^{n-2} \sum_{k=j+1}^{n-1} \left( (n-i-3)(n-j-2) + (n-k-1) \right) & = & \frac{1}{5} \binom{n-1}{4} (8n-25), \\
\sum_{i=1}^{n-3} \sum_{j=i+1}^{n-2} \sum_{k=j+1}^{n-1} \left( (n-i-3)(n-k-1) + (n-j-2) \right) & = & \frac{1}{5} \binom{n-1}{4} (4n-5), \\\
\sum_{i=1}^{n-3} \sum_{j=i+1}^{n-2} \sum_{k=j+1}^{n-1} \left( (n-j-2)(n-k-1) + (n-i-3) \right) & = & \frac{1}{5}\binom{n-1}{4} (3n+5),
\end{eqnarray*}
and
\begin{eqnarray*}
\sum_{i=1}^{n-3} \sum_{j=i+1}^{n-2} \sum_{k=j+1}^{n-1} \left((n-i-3)(n-j-2)(n-k-1) +1\right) & = & \frac{1}{8} \binom{n-1}{3} (n^3-11n^2+40n-40).
\end{eqnarray*}

There is a bijection between compositions $\alpha = (e_i,e_j,e_k)$ of $e$ into three parts and compositions $(e_i,e_j)$ of an integer $m \in [2,e-1]$ into two parts.  The number of compositions of $m$ into two parts is $m-1$.  Taking a sum over all possible compositions $\alpha$ gives
\[
\sum_\alpha g_{\alpha_{12}}(p) = \sum_{m=2}^{e-1} (m-1) g_3(p^m).
\]
For exactly the same reason we get the same expression when we sum $g_{\alpha_{13}}(p)$ or $g_{\alpha_{23}}(p)$ over this set of  $\alpha$.  The sum of $g_\alpha(p)$ taken over all compositions of $e$ into three parts is $g_4(p^e)$.  Combining these facts with the observation that $(8n-25)+(4n-5)+(3n+5) = 5(3n-5)$ completes the proof.

We now prove that the values in the table given above are correct.
\begin{enumerate}
\item $(a_{in},a_{jn},a_{kn}) = (1,1,1)$.

Proposition \ref{prop:subring_matrix_divis} (2) implies that if $A$ is a subring matrix, then
	\[
	\begin{pmatrix}
		p^{e_i} & a_{ij} & a_{ik} & 1\\
		0 & p^{e_j} & a_{jk} & 1 \\ 
		0 & 0 & p^{e_k}& 1 \\ 
		
		0 & 0 & 0& 1
	\end{pmatrix}
	\]
is an irreducible subring matrix.  There are $g_{\alpha}(p)$ possibilities for the triple of entries $a_{ij},a_{ik},a_{jk}$.  Any such choice completely determines the entries of $A$.  It is easy to check that each of these choices does give a subring matrix.

\item $(a_{in},a_{jn},a_{kn}) = (0,0,0)$.

As in the previous case, there are $g_\alpha(p)$ choices for the triple of entries $a_{ij},a_{ik},a_{jk}$.  There are $n-i-3$ choices for which entry in row $i$ is equal to $1$, $n-j-2$ choices for which entry in row $j$ is equal to $1$, and $n-k-1$ choices for which entry in row $k$ is equal to $1$.  These choices are independent and completely determine the entries of $A$.  It is easy to check that every set of choices gives a subring matrix.

\end{enumerate}

We discuss two of the six additional possibilities for $(a_{in},a_{jn},a_{kn})$.  The others cases are very similar.
\begin{enumerate}\setcounter{enumi}{2}
\item $(a_{in},a_{jn},a_{kn}) = (0,1,1)$.

Proposition \ref{liulastcol} implies that $a_{ij} = a_{ik} =  0$.  Proposition \ref{prop:subring_matrix_divis} (2) implies that if $A$ is a subring matrix, then
	\[
	\begin{pmatrix}
		p^{e_i} & 0 & 0 & 1\\
		0 & p^{e_j} & a_{jk} & 1 \\ 
		0 & 0 & p^{e_k}& 1 \\ 
		
		0 & 0 & 0& 1
	\end{pmatrix}
	\]
is an irreducible subring matrix. This is an irreducible subring matrix if and only if 
	\[
	\begin{pmatrix}
		p^{e_j} & a_{jk} & 1 \\ 
		0 & p^{e_k}& 1 \\ 
		0 & 0& 1
	\end{pmatrix}
	\]
is an irreducible subring matrix.  There are $g_{\alpha_{23}}(p)$ such matrices.  There are $n-i-3$ choices for which entry in row $i$ is equal to $1$.  These choices are independent and completely determine the entries of $A$.  It is easy to check that every set of choices gives a subring matrix.

\item $(a_{in},a_{jn},a_{kn}) = (0,1,0)$.

Proposition \ref{liulastcol} implies that $a_{ij} = a_{jk} =  0$.  Proposition \ref{prop:subring_matrix_divis} (2) implies that if $A$ is a subring matrix, then
	\[
	\begin{pmatrix}
		p^{e_i} & 0 & a_{ik} & 1\\
		0 & p^{e_j} &  0  & 1 \\ 
		0 & 0 & p^{e_k}& 1 \\ 
		0 & 0 & 0& 1
	\end{pmatrix}
	\]
is an irreducible subring matrix. This is an irreducible subring matrix if and only if 
	\[
	\begin{pmatrix}
		p^{e_i} & a_{ik} & 1 \\ 
		0 & p^{e_k}& 1 \\ 
		0 & 0& 1
	\end{pmatrix}
	\]
is an irreducible subring matrix.  There are $g_{\alpha_{13}}(p)$ such matrices.  There are $n-i-3$ choices for which entry in row $i$ is equal to $1$ and $n-k-1$ choices for which entry in row $k$ is equal to $1$.  These choices are independent and completely determine the entries of $A$.  It is easy to check that every set of choices gives a subring matrix.

\end{enumerate}
We omit the four remaining cases since they are very similar to the ones described here.

\end{proof}

We now give the proof of Theorem \ref{general_upper}.
\begin{proof}[Proof of Theorem \ref{general_upper}]
Theorem \ref{corank_diag_thm} implies that a subring of $\Z^n$ of corank $k$ and index $p^e$ is the column span of an $n \times n$ matrix $A$ with diagonal $(p^{e_1}, p^{e_2},\ldots, p^{e_{n-1}},1)$ where exactly $k$ of these first $n-1$ diagonal entries are positive powers of $p$.  Suppose these entries are $p^{i_1}, p^{i_2}, \ldots, p^{i_k}$ where $1 \le i_1 < i_2< \cdots < i_k \le n-1$. Let $\alpha = (p^{e_{i_1}},p^{e_{i_2}},\ldots, p^{e_{i_k}})$. Note that $\alpha$ is a composition of $e$ into $k$ parts.

Proposition \ref{prop:subring_matrix_divis} (2) implies that
\[
	\begin{pmatrix}
		p^{e_{i_1}} & a_{i_1 i_2} & \cdots & a_{i_1 i_k} & 1\\
		0 & p^{e_{i_2}} &  \cdots & a_{i_2 i_k}   & 1 \\ 
		0 & 0 & \ddots & \vdots & \vdots \\
		0 & 0 & 0 & p^{e_k}& 1 \\ 
		0 & 0 & 0& 0 &  1
	\end{pmatrix}
\]
is an irreducible subring matrix.  There are $g_\alpha(p)$ choices for the entries of this matrix.  Proposition \ref{prop_exactly_1} (2) implies that in each of row $i_1, i_2,\ldots, i_k$ of the matrix $A$, there is precisely one entry equal to $1$.  Once we make a choice for where these entries are, we have completely determined the entries of the matrix $A$.   

For the lower bound, we note that if we choose each of these entries to be in the final column of $A$, then it is easy to check that we do get a subring matrix.  For the upper bound, we note that in each one of these rows, the entry equal to $1$ cannot lie in the columns $i_1, \ldots, i_k$, so there are at most $n-k$ choices for where this $1$ could be.  This means that the number of choices for $A$ where the collection of rows in which the diagonal entry is not equal to $1$ is $i_1, i_2, \ldots, i_k$ and these diagonal entries give the composition $\alpha$ is at least $g_\alpha(p)$ and is at most $(n-k)^k g_\alpha(p)$.  Taking a sum over all $\binom{n-1}{k}$ choices for $(i_1,\ldots, i_k)$ and over all compositions $\alpha$ of $e$ into $k$ parts completes the proof of the first part of the theorem.

For the second part, recall that $\tilde{h}_{n,k}(p^e) = \sum_{\ell=1}^{k} h_{n,\ell}(p^e)$.  For any positive integer $\ell$, there is a trivial upper bound $\binom{n-1}{\ell} \le 2^{n-1}$.  It is easy to see that for any $\ell \le k$, we have $g_{\ell+1}(p^e) \le f_{\ell+1}(p^e) \le f_{k+1}(p^e)$.  We conclude that $\tilde{h}_{n,k}(p^e)$ is at most 
\[   
\sum_{\ell=1}^{k} (n-\ell)^\ell \binom{n-1}{\ell} g_{\ell+1}(p^e) 
 \le  \sum_{\ell=1}^{k} (n-1)^k 2^{n-1} f_{k+1}(p^e) 
 \le  k (n-1)^k 2^{n-1} f_{k+1}(p^e).
\]
\end{proof}
 
\section{Subrings of $\Z^n$ of given corank for $n \le 5$}\label{corank_smalln}

When $n \ge 6$, Theorem \ref{proportion_corank_k} states that the proportion of subrings in $\Z^n$ with `small' corank is 0\%. In this section, we study subrings of $\Z^n$ with given corank when $n \le 5$. For each $n \le 4$ we compute the proportion of subrings of each fixed corank.  For $n =5$ we show that the proportion of subrings of each fixed corank is positive, but we cannot determine exactly what these proportions are.  Theorem \ref{proportion_corank_k} (1) implies that 100\% of subrings of $\Z^6$ have corank 4 or 5. We are unable to determine whether a positive proportion of these subrings have corank $4$ because we cannot currently determine the order of the pole at $s=1$ of $H_{6,4}(X)$. We will see below that a positive proportion of these subrings have corank $5$.

In Proposition \ref{subringZ2} we saw that for each positive integer $k$ there is a unique subring of $\Z^2$ of index $k$ and that every proper subring of $\Z^2$ has corank $1$.  We next consider subrings of~$\Z^3$.

\begin{cor}
	We have 
	\begin{align*}
	p_{3,1}^R &= \zeta(2)\prod_p (p^{-3}(p-1)^2(p+2)) \approx .471683\\
 p_{3,2}^R &= 1 - p_{3,1}^R \approx .528317.
	\end{align*}
\end{cor}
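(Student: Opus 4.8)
The plan is to read off both values from asymptotics already in hand. First I would record that every subring $R \subseteq \Z^3$ has $\corank(R) \le n-1 = 2$: by the discussion following Proposition \ref{SNF_prop}, a subring matrix has a diagonal entry equal to $1$, hence the gcd of its $1\times 1$ minors is $1$, so $\cok$ of the matrix has at most $n-1$ nontrivial invariant factors. Thus, apart from $R = \Z^3$ itself, a subring of $\Z^3$ has corank exactly $1$ or exactly $2$, and in particular $H_{3,2}(X) = N_3(X)$.

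Next I would invoke Corollary \ref{cor:cocyclic_asymptotic} with $n = 3$, so that $m = \binom{3}{2} = 3$ and $(m-1)! = 2$, giving
\[
H_{3,1}(X) \sim \frac{1}{2} \prod_p \left( p^{-3}(p-1)^2(p+2) \right) X (\log X)^2 ,
\]
together with Corollary \ref{Nn_growth}, which gives $N_3(X) \sim \frac{1}{2\zeta(2)} X (\log X)^2$. Dividing, the common factor $\frac{1}{2} X(\log X)^2$ cancels and
\[
p_{3,1}^R = \lim_{X \to \infty} \frac{H_{3,1}(X)}{N_3(X)} = \zeta(2) \prod_p \left( p^{-3}(p-1)^2(p+2) \right).
\]
Since the number of subrings of $\Z^3$ of corank exactly $2$ and index less than $X$ equals $H_{3,2}(X) - H_{3,1}(X) = N_3(X) - H_{3,1}(X)$, dividing by $N_3(X)$ and letting $X \to \infty$ yields $p_{3,2}^R = 1 - p_{3,1}^R$.

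There is no substantive obstacle here; the only remaining work is the numerical evaluation. Expanding $p^{-3}(p-1)^2(p+2) = 1 - 3p^{-2} + 2p^{-3}$ shows that $\prod_p \left(1 - 3p^{-2} + 2p^{-3}\right)$ converges absolutely, and evaluating this product together with $\zeta(2) = \pi^2/6$ to sufficient precision gives $p_{3,1}^R \approx 0.471683$, and hence $p_{3,2}^R \approx 0.528317$.
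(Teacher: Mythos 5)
Your proof is correct and follows essentially the same route as the paper: combine Corollary \ref{cor:cocyclic_asymptotic} (specialized to $n=3$, $m=3$) with Corollary \ref{Nn_growth} and the fact that every proper subring of $\Z^3$ has corank $1$ or $2$. The explicit justification you give for the corank bound via Proposition \ref{SNF_prop} and the algebraic simplification $p^{-3}(p-1)^2(p+2) = 1 - 3p^{-2} + 2p^{-3}$ are both sound and match the paper's intent.
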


\begin{proof}
	This follows from Corollary \ref{cor:cocyclic_asymptotic}, Corollary \ref{Nn_growth}, and the fact that every proper subring of $\Z^3$ has corank $1$ or $2$.
\end{proof}

We next consider subrings of $\Z^4$. Every proper subring in $\Z^4$ has corank $1,2$, or $3$. Specializing the formula for $\zeta_{\Z^n}^{R,(2)}(s)$ given in Proposition \ref{prop:corank2_zeta} to the case $n=4$ gives the following.

\begin{cor}\label{crk2_4_cor}
We have 
	\begin{align*}
		\zeta_{\Z^4}^{R,(2)}(s) &=  \zeta(3s-1)\zeta(s)^6 \prod_p\bigg( 1 - 6 p^{1 - 9 s} + 24 p^{1 - 8 s} - 33 p^{1 - 7 s}\\
	&+	12 p^{1 - 6 s} + 12 p^{1 - 5 s} - 12 p^{1 - 4 s} + 3 p^{1 - 3 s}- 
		4 p^{-7 s} + 18 p^{-6 s}\\
		&- 28 p^{-5 s}+ 13 p^{-4 s }+ 8 p^{-3s} - 
		8 p^{-2s}\bigg).
	\end{align*}
\end{cor}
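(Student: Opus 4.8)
The plan is to specialize Proposition \ref{prop:corank2_zeta} to $n = 4$ and then rearrange the Euler factor into the stated shape. First I would evaluate the constants appearing in that proposition at $n = 4$: we have $m = \binom{4}{2} = 6$, and since $3\cdot 4^2 - 17\cdot 4 + 36 = 16$, we get $a(4) = \tfrac{16}{12}\binom{3}{2} = 4$ and $b(4) = 3\binom{3}{3} = 3$. Substituting $m = 6$, $a(4) = 4$, $b(4) = 3$ into the formula of Proposition \ref{prop:corank2_zeta}, the coefficient of $p^{1-4s}$ becomes $a(4) - m + 2 = 0$ and the remaining coefficients are $m-2 = 4$, $\,a(4)+b(4)-m+1 = 2$, $\,-a(4) = -4$, $\,a(4)-1 = 3$, and $-(2a(4)+b(4)-m+1) = -6$. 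This gives
\[
\zeta_{\Z^4}^{R,(2)}(s) = \zeta(s)^2 \zeta(3s-1) \prod_p \left(1 + 4p^{-s} + 2p^{-2s} - 4p^{-3s} + 3p^{1-3s} - 6p^{1-5s}\right).
\]

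Next I would convert the two factors of $\zeta(s)$ into the six that appear in the statement. Using $\zeta(s)^{-1} = \prod_p (1-p^{-s})$, write $\zeta(s)^2 = \zeta(s)^6 \prod_p (1-p^{-s})^4$ and absorb the extra product into the Euler factor:
\[
\zeta_{\Z^4}^{R,(2)}(s) = \zeta(s)^6 \zeta(3s-1) \prod_p \left[(1-p^{-s})^4\left(1 + 4p^{-s} + 2p^{-2s} - 4p^{-3s} + 3p^{1-3s} - 6p^{1-5s}\right)\right].
\]
It then remains only to expand the bracket. Setting $x = p^{-s}$, the bracket equals $(1-x)^4\left(1 + 4x + 2x^2 + (3p-4)x^3 - 6px^5\right)$, which is a degree-$9$ polynomial in $x$ whose coefficients are affine-linear in $p$; carrying out the multiplication (for instance with the computer algebra already used elsewhere in this section) produces exactly the polynomial displayed in the corollary.

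There is essentially no obstacle here: the argument is a direct substitution followed by a routine polynomial expansion, so the only real work is bookkeeping the coefficients of the various $p^j p^{-is}$. As a check one can confirm that the constant term $1$ and the vanishing of the $p^{-s}$ coefficient survive multiplication by $(1-x)^4$, that the top-degree term is $-6p\,x^9$ coming from $x^4 \cdot (-6px^5)$, and that the right-most pole of the resulting expression lies at $s = 1$ with order $6 = \binom{4}{2}$, all consistent with Proposition \ref{prop:corank2_zeta}. One may also note, as an independent sanity check, that since subrings of $\Z^4$ have corank at most $3$ the difference $\zeta_{\Z^4}^R(s) - \zeta_{\Z^4}^{R,(2)}(s) = \sum_j h_{4,3}(j)\,j^{-s}$ should match the generating series built from Theorem \ref{corank3_upper}.
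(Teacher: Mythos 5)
Your approach is exactly what the paper does: specialize Proposition \ref{prop:corank2_zeta} to $n=4$ (obtaining $m=6$, $a(4)=4$, $b(4)=3$), then rewrite $\zeta(s)^2$ as $\zeta(s)^6\prod_p(1-p^{-s})^4$ and absorb the extra factor into the Euler product. The coefficient bookkeeping and expansion you outline are correct and match the stated polynomial.
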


This expression leads directly to the following result.
\begin{cor}
	We have
	\begin{align*}
		p_{4,1}^R &=\zeta(2)^3\prod_p\left( p^{-6}(p - 1)^5(p + 5)\right)\approx .0593079 \\
 p_{4,2}^R &= 	\zeta(2)^4\prod_p p^{-8}\left((p-1)^5 (1+p) (p^2+4p+6)\right) - p_{4,1}^R \approx .4389531\\
		p_{4,3}^R &= 1 - p_{4,1}^R - p_{4,2}^R \approx .501739.
	\end{align*}
\end{cor}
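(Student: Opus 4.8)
The plan is to obtain each of the three densities as the limit of a ratio of two asymptotics already available in the excerpt. Throughout, $p_{4,k}^R$ denotes the proportion of subrings of $\Z^4$ of corank \emph{exactly} $k$. Recall from the discussion after Proposition \ref{SNF_prop} that an $n \times n$ subring matrix has a diagonal entry equal to $1$, so every proper subring of $\Z^4$ has corank $1$, $2$, or $3$; the unique corank-$0$ subring, $\Z^4$ itself, contributes $O(1)$ to every counting function below and is ignored. In particular, once the densities of corank $1$ and corank $2$ are shown to exist, we automatically get $p_{4,1}^R+p_{4,2}^R+p_{4,3}^R=1$.

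First I would treat $p_{4,1}^R$. Specializing Corollary \ref{cor:cocyclic_asymptotic} to $n=4$, where $m=\binom{4}{2}=6$, gives $H_{4,1}(X)\sim \tfrac{1}{5!}\prod_p\bigl(p^{-6}(p-1)^5(p+5)\bigr)X(\log X)^5$, while Corollary \ref{Nn_growth} gives $N_4(X)\sim \tfrac{1}{5!\,\zeta(2)^3}X(\log X)^5$. Dividing (the corank-$0$ term in $H_{4,1}$ is negligible) yields
\[
p_{4,1}^R=\lim_{X\to\infty}\frac{H_{4,1}(X)}{N_4(X)}=\zeta(2)^3\prod_p\bigl(p^{-6}(p-1)^5(p+5)\bigr).
\]

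Next I would compute the density of subrings of $\Z^4$ of corank at most $2$, that is, $\lim_{X\to\infty}H_{4,2}(X)/N_4(X)$. By Corollary \ref{crk2_4_cor}, $\zeta_{\Z^4}^{R,(2)}(s)=\zeta(3s-1)\,\zeta(s)^6\,E(s)$ with $E(s)=\prod_p E_p(s)$ the displayed Euler product. Since each non-constant term of $E_p(s)$ has modulus $O(p^{1-3\Re(s)})$ or $O(p^{-2\Re(s)})$, the product $E(s)$ converges absolutely and is analytic for $\Re(s)>2/3$; together with the regularity of $\zeta(3s-1)$ on $\Re(s)=1$ and the fact that $\zeta(s)$ is regular there except for its simple pole at $s=1$, this shows $\zeta_{\Z^4}^{R,(2)}(s)$ satisfies the hypotheses of Theorem \ref{tauberian_theorem} with $\alpha=1$, $\beta=6$ (the order of the pole also follows from Proposition \ref{prop:corank2_zeta}). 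Using $\lim_{s\to1}(s-1)\zeta(s)=1$, the constant produced by the Tauberian theorem is $G(1)=\zeta(2)E(1)$, so $H_{4,2}(X)\sim \tfrac{\zeta(2)E(1)}{\Gamma(6)}X(\log X)^5=\tfrac{\zeta(2)E(1)}{5!}X(\log X)^5$. Dividing by $N_4(X)$ gives that the density of corank-at-most-$2$ subrings of $\Z^4$ equals $\zeta(2)^4 E(1)$, and subtracting $p_{4,1}^R$ gives the formula for $p_{4,2}^R$. The only real work, and the step I expect to be the main obstacle, is identifying $E(1)$ in closed form: one must set $s=1$ in the polynomial of Corollary \ref{crk2_4_cor} and collect powers of $p$ to obtain
\[
E_p(1)=p^{-8}(p-1)^5(1+p)(p^2+4p+6),
\]
verified by expanding both sides. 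This factored form is manifestly positive and equals $1+O(p^{-2})$, so $E(1)$ is a convergent product of positive numbers, in particular $G(1)\ne0$, as Theorem \ref{tauberian_theorem} requires; hence $\zeta(2)^4 E(1)=\zeta(2)^4\prod_p\bigl(p^{-8}(p-1)^5(1+p)(p^2+4p+6)\bigr)$, matching the claim.

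Finally, $p_{4,3}^R=1-p_{4,1}^R-p_{4,2}^R$, since by the first paragraph the subrings of corank $1$, $2$, and $3$ partition the proper subrings of $\Z^4$ and hence their densities sum to $1$. The decimal approximations follow by numerically evaluating the Euler products.
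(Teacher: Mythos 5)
Your proposal is correct and follows essentially the same route as the paper, which derives the result by combining Corollary \ref{Nn_growth}, Corollary \ref{cor:cocyclic_asymptotic}, Corollary \ref{crk2_4_cor}, and the fact that proper subrings of $\Z^4$ have corank $1$, $2$, or $3$. Your write-up just makes explicit the Tauberian application to $\zeta_{\Z^4}^{R,(2)}(s)$ and the polynomial factorization $p^{8}E_p(1)=(p-1)^5(1+p)(p^2+4p+6)$, both of which check out.
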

\begin{proof}
	This follows from Corollaries \ref{Nn_growth}, \ref{cor:cocyclic_asymptotic},  \ref{crk2_4_cor},  and from the fact that every proper subring of $\Z^4$ has corank $1,2$, or $3$.
\end{proof}

Theorem \ref{N5X} says that there exists a positive real number $C_5$ such that 
\[
N_5(X) \sim C_5 X (\log X)^9.
\]
However, it is not currently known what this constant $C_5$ is.  Theorem \ref{Hnk_123} says that $H_{5,1}(X), H_{5,2}(X)$, and $H_{5,3}(X)$ each have similar rates of growth.  We can conclude that $p_{5,k}^R >0$ for each $k \in \{1,2,3\}$, but we are not able to determine the exact values of these probabilities.  We can determine their relative frequencies, for example, $\frac{p_{5,2}^R}{p_{5,1}^R} \approx 59.801$ and $\frac{p_{5,3}^R}{p_{5,1}^R} \approx 679.548$.  We now prove that $p_{5,4}^R > 0$, or equivalently, that $p_{5,1}^R + p_{5,2}^R + p_{5,3}^R < 1$.  We do this by proving more generally that $p_{n,n-1}^R>0$ for any positive integer $n$.

For the rest of the paper, if $G$ is a finite abelian group we write $G_p$ for its Sylow $p$-subgroup.  
\begin{thm}\label{corank_n-1}
Let $n$ be a positive integer and $p$ be a prime.  Then
\[
\lim_{X\rightarrow \infty} \frac{\#\{\text{Subrings } R \subseteq \Z^n\colon [\Z^n\colon R] \le X \text{ and } (\Z^n/R)_p \cong (\Z/p\Z)^{n-1}\} }{N_n(X)} > 0.
\]
Therefore, $p^R_{n,n-1} > 0$.
\end{thm}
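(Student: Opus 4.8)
The plan is to prove the displayed statement, that for a fixed prime $p$ a positive proportion of all subrings $R\subseteq\Z^n$ satisfy $(\Z^n/R)_p\cong(\Z/p\Z)^{n-1}$; since every such $R$ has corank at least $n-1$, hence exactly $n-1$, the assertion $p^R_{n,n-1}>0$ follows immediately.

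First I would classify the subrings of $\Z^n$ of $p$-power index whose cokernel is $(\Z/p\Z)^{n-1}$. Any such subring $S$ satisfies $p\Z^n\subseteq S$ (as $\Z^n/S$ is killed by $p$) with $[S:p\Z^n]=p$, so $S/p\Z^n$ is a one-dimensional $\mathbb{F}_p$-subspace of $\mathbb{F}_p^n$ that is closed under componentwise multiplication and contains $(1,\dots,1)$; the only such subspace is $\mathbb{F}_p\cdot(1,\dots,1)$. Hence there is a unique such subring, namely $S_p=\{(x_1,\dots,x_n)\in\Z^n:x_1\equiv\cdots\equiv x_n\pmod p\}$, which is irreducible by definition and therefore has corank $n-1$ by Proposition~\ref{irred_corank_prop}. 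Consequently, using the decomposition of a subring into its primary components (equivalently, the Euler product for $\zeta_{\Z^n}^R$), the map sending $R$ to its prime-to-$p$ part is a bijection between the subrings with $(\Z^n/R)_p\cong(\Z/p\Z)^{n-1}$ and the subrings $T\subseteq\Z^n$ of index coprime to $p$, with inverse $T\mapsto S_p\cap T$; in particular such $R$ have index exactly $p^{n-1}[\Z^n:T]$.

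This reduces the count to an analytic comparison. Writing $D_p(s)=\sum_m\#\{R\subseteq\Z^n:[\Z^n:R]=m,\ (\Z^n/R)_p\cong(\Z/p\Z)^{n-1}\}\,m^{-s}$, the bijection gives $D_p(s)=p^{-(n-1)s}\prod_{q\ne p}\zeta_{\Z^n,q}^R(s)=p^{-(n-1)s}\,\zeta_{\Z^n}^R(s)/\zeta_{\Z^n,p}^R(s)$. The key point is that removing the single Euler factor at $p$ changes neither the location nor the order of the rightmost pole: by du Sautoy--Grunewald \cite{dusautoy_grunewald} the local factor $\zeta_{\Z^n,p}^R(s)$ is a rational function of $p^{-s}$ whose denominator is a product of terms $1-p^{a_i}p^{-b_i s}$, so its abscissa of convergence is $\alpha_p=\max_i a_i/b_i$, whereas each such term becomes $\zeta(b_is-a_i)$ in the global Euler product and contributes a pole of $\zeta_{\Z^n}^R(s)$ at $s=(a_i+1)/b_i>a_i/b_i$. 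Thus the abscissa $\sigma_n$ of $\zeta_{\Z^n}^R(s)$ satisfies $\sigma_n>\alpha_p$, so $\zeta_{\Z^n,p}^R(\sigma_n)$ is finite and positive, and $D_p(s)$ has its rightmost pole at $s=\sigma_n$ of exactly the same order as $\zeta_{\Z^n}^R(s)$, with positive leading coefficient. Applying the Tauberian theorem (Theorem~\ref{tauberian_theorem}) to both $D_p(s)$ and $\zeta_{\Z^n}^R(s)$ — via the du Sautoy--Grunewald analyticity on $\Re(s)=\sigma_n$, the fact that the exact (if unknown) pole order automatically supplies hypothesis (2), and the known behavior of $N_n(X)$ (explicit for $n\le5$ and Proposition~\ref{Z6_lower} for $n=6$) — one finds $\sum_{m\le X}[\text{coefficient of }D_p]$ and $N_n(X)$ grow at the same rate $X^{\sigma_n}(\log X)^{r-1}$, so the limit in the theorem equals $p^{-(n-1)\sigma_n}/\zeta_{\Z^n,p}^R(\sigma_n)>0$.

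I expect the main obstacle to be this last analytic step: verifying that $\zeta_{\Z^n}^R(s)/\zeta_{\Z^n,p}^R(s)$ meets the hypotheses of a Tauberian theorem on all of $\Re(s)=\sigma_n$. Justifying $\alpha_p<\sigma_n$ rigorously requires tracking which denominator factors of the local zeta function actually survive in the global Euler product, and the quotient could a priori acquire boundary poles wherever $\zeta_{\Z^n,p}^R(s)$ vanishes on $\Re(s)=\sigma_n$. Since $\zeta_{\Z^n,p}^R(s)=1+\sum_{e\ge1}f_n(p^e)p^{-es}$ has nonnegative coefficients, it is zero-free on $\Re(s)\ge\sigma_n$ for all but finitely many $p$ (those with $\sum_{e\ge1}f_n(p^e)p^{-e\sigma_n}<1$, which includes all large $p$ since $f_n(p)=\binom{n}{2}$); for the remaining primes this needs to be checked directly, using the explicit shape of $\zeta_{\Z^n}^R(s)$ for $n\le4$ and a separate argument otherwise, noting that any boundary pole of $D_p$ away from $\sigma_n$ is only of the order of a zero of a fixed rational function and so contributes to strictly lower-order terms.
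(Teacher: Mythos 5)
Your argument follows the same route as the paper's: classify the unique subring $R_p^*\subseteq\Z^n$ with $(\Z^n/R)_p\cong(\Z/p\Z)^{n-1}$, set up the bijection $R\leftrightarrow(\text{prime-to-}p\text{ part of }R)$ via the Euler product decomposition, and compare $p^{-(n-1)s}\zeta_{\Z^n}^R(s)/\zeta_{\Z^n,p}^R(s)$ with $\zeta_{\Z^n}^R(s)$ using the Tauberian theorem. Your identification of $R_p^*=\{x\in\Z^n: x_1\equiv\cdots\equiv x_n\pmod p\}$ by passing to $\mathbb{F}_p$-subalgebras of $\mathbb{F}_p^n$ is actually a bit slicker than the paper's, which goes through the subring-matrix machinery (Theorem~\ref{corank_diag_thm} and Propositions~\ref{liu33},~\ref{liu31}). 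The concern you flag about possible boundary zeros of $\zeta_{\Z^n,p}^R(s)$ on $\Re(s)=\alpha$ is a genuine one; the paper sidesteps it by invoking the du Sautoy--Grunewald analytic-continuation results for the Euler product $\prod_{q\ne p}\zeta_{\Z^n,q}^R(s)$ itself (an Euler product of exactly the same shape as $\zeta_{\Z^n}^R(s)$, hence satisfying the hypotheses of Theorem~\ref{tauberian_theorem} with the same $\alpha$ and pole order), rather than trying to divide out the local factor and separately controlling its zeros. One small inaccuracy in your write-up: the final step does not actually use the known asymptotics of $N_n(X)$ for $n\le 6$; it only needs that both $\zeta_{\Z^n}^R(s)$ and $\prod_{q\ne p}\zeta_{\Z^n,q}^R(s)$ satisfy Delange's hypotheses with identical $\alpha$ and $\beta$, after which the ratio of leading constants is $p^{-(n-1)\alpha}\zeta_{\Z^n,p}^R(\alpha)^{-1}>0$ regardless of whether $\alpha$ and $\beta$ are known explicitly.
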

The second statement follows from the first by noting that a finite abelian group has rank at most $r$ if and only if each of its Sylow $p$-subgroups has rank at most $r$.  If $R \subseteq \Z^n$ is a finite index subring then $\Z^n/R$ is a finite abelian group of rank at most $n-1$.  Therefore, if $(\Z^n/R)_p$ has rank $n-1$, then $\Z^n/R$ has rank $n-1$.

The main idea in the proof of Theorem \ref{corank_n-1} is to break $R$ up into `prime power components'.  Before explaining what we mean exactly, we highlight a particular subring of $\Z^n$ that plays an important role in our argument.
\begin{prop}\label{unique_Rpstar}
Let $n \ge 2$ be a positive integer, $p$ be a prime, and $e_1,e_2,\ldots, e_n$ be the standard basis vectors of $\R^n$.  There is a unique subring $R_p^* \subsetneq \Z^n$ such that $\Z^n/R_p^* \cong (\Z/p\Z)^{n-1}$.  This subring is generated by $pe_1, pe_2,\ldots, pe_{n-1}, e_1+\cdots + e_n$.
\end{prop}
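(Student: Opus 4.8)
The plan is to show that any subring $R \subsetneq \Z^n$ with $\Z^n/R \cong (\Z/p\Z)^{n-1}$ is the column span of one completely determined matrix in $\HH_n(\Z)$, and then to check that this matrix really is a subring matrix with the stated cokernel. Such an $R$ has index $p^{n-1}$ (so it is proper, since $n \ge 2$) and corank $n-1$. Writing $A \in \HH_n(\Z)$ for its subring matrix with diagonal $(p^{e_1},\ldots,p^{e_{n-1}},1)$, Theorem \ref{corank_diag_thm} forces all of $e_1,\ldots,e_{n-1}$ to be positive; since $e_1+\cdots+e_{n-1} = n-1$, each $e_i = 1$ and the diagonal is $(p,p,\ldots,p,1)$.

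Next I would pin down the off-diagonal entries. For $1 \le i < j \le n-1$, Proposition \ref{prop:subring_matrix_divis}(3) gives $p \mid a_{ij}$, while the Hermite normal form condition gives $0 \le a_{ij} < a_{ii} = p$; hence $a_{ij} = 0$. Thus the first $n-1$ columns of $A$ are exactly $pe_1,\ldots,pe_{n-1}$. Now the requirement $(1,1,\ldots,1)^T \in \col(A)$, applied to a general element $\sum_{i=1}^{n-1} c_i\,pe_i + c_n v_n$, forces $c_n = 1$ (reading off the last coordinate) and then $a_{in} \equiv 1 \pmod p$ for every $i \le n-1$; combined with $0 \le a_{in} < p$ this gives $a_{in} = 1$. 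So $v_n = (1,1,\ldots,1)^T = e_1+\cdots+e_n$, and $A$ is uniquely determined, which establishes uniqueness.

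Finally, for existence I would verify directly that the matrix $A$ with columns $pe_1,\ldots,pe_{n-1},\,e_1+\cdots+e_n$ is a subring matrix: its last column is the identity vector, and multiplicative closure of $\col(A)$ follows from the short list $(pe_i)\circ(pe_i) = p\cdot(pe_i)$, $(pe_i)\circ(pe_j) = 0$ for $i\ne j$, $(pe_i)\circ(e_1+\cdots+e_n) = pe_i$, and $(e_1+\cdots+e_n)\circ(e_1+\cdots+e_n) = e_1+\cdots+e_n$, all lying in $\col(A)$. To see $\cok(A) \cong (\Z/p\Z)^{n-1}$, note that in $\Z^n/\col(A)$ the images of $e_1,\ldots,e_{n-1}$ have order dividing $p$ and generate the quotient (since $e_n \equiv -(e_1+\cdots+e_{n-1})$), so $|\cok(A)| \le p^{n-1}$; as $\det A = p^{n-1}$, equality holds and $\cok(A)$ is elementary abelian of rank $n-1$.

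The only step with any subtlety, and it is slight, is the reduction of the diagonal to $(p,\ldots,p,1)$, where Theorem \ref{corank_diag_thm} does the essential work of converting the isomorphism-type hypothesis on $\Z^n/R$ into information about which diagonal entries of the Hermite normal form are nontrivial. After that the argument is forced: the Hermite inequalities together with Proposition \ref{prop:subring_matrix_divis} and the presence of $(1,\ldots,1)^T$ in the column span leave exactly one candidate matrix, and the verification that this candidate works is routine.
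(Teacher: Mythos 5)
Your proof is correct and follows essentially the same structure as the paper's: Theorem \ref{corank_diag_thm} pins down the diagonal to $(p,\ldots,p,1)$, after which the Hermite bounds plus divisibility constraints force all off-diagonal entries in the first $n-1$ columns to vanish, and the last column is forced to be $(1,\ldots,1)^T$. The only difference is that the paper routes through Propositions \ref{liu33} and \ref{liu31} (irreducibility of the subring matrix), whereas you cite Proposition \ref{prop:subring_matrix_divis}(3) directly and recover the last-column entries from the membership $(1,\ldots,1)^T \in \col(A)$; you also write out the existence verification that the paper leaves implicit. These are cosmetic variations on the same argument.
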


\begin{proof}
Suppose $R \subseteq \Z^n$ is a subring for which $\Z^n/R \cong (\Z/p\Z)^{n-1}$.  It is given by $\col(A)$ for an $n \times n$ subring matrix $A$.  Since $[\Z^n\colon \col(A)] = p^{n-1}$ and $\col(A)$ has corank $n-1$, Theorem \ref{corank_diag_thm} implies that the diagonal of $A$ must be $(p,p,\ldots, p, 1)$, and Proposition \ref{liu33} implies that $A$ is an irreducible subring matrix.  Proposition \ref{liu31} implies that every entry in the last column of $A$ is $1$ and all of the other nonzero entries of $A$ are the entries equal to $p$ on the diagonal.  In particular, $A$ is unique.
\end{proof}

We need one additional piece in order to prove Theorem \ref{corank_n-1}.
\begin{prop}\label{p-part_trivial}
Let $n$ be a positive integer and $p$ be a prime.  Then
\[
\lim_{X\rightarrow \infty} \frac{\#\{\text{Subrings } R \subseteq \Z^n\colon [\Z^n\colon R] \le X \text{ and } p \nmid [\Z^n \colon R]\} }{N_n(X)}  = \zeta_{\Z^n,p}^R(\alpha)^{-1} > 0,
\]
where $\alpha$ is the abscissa of convergence of $\zeta_{\Z^n}^R(s)$.
\end{prop}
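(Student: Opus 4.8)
The plan is to realize the numerator as the summatory function of the Dirichlet series obtained from $\zeta_{\Z^n}^R(s)$ by deleting its Euler factor at $p$, and then apply Delange's Tauberian theorem (Theorem \ref{tauberian_theorem}) to that series and to $\zeta_{\Z^n}^R(s)$ and compare. Since $\zeta_{\Z^n}^R(s) = \prod_q \zeta_{\Z^n,q}^R(s)$ and $f_n$ is multiplicative, the subrings $R \subseteq \Z^n$ with $p \nmid [\Z^n\colon R]$ are exactly counted by
\[
\Xi_p(s) := \prod_{q \ne p}\zeta_{\Z^n,q}^R(s) = \frac{\zeta_{\Z^n}^R(s)}{\zeta_{\Z^n,p}^R(s)},
\]
a Dirichlet series with nonnegative coefficients whose abscissa of convergence is at most $\alpha$, since for real $\sigma > \alpha$ the sub-sum bound $\sum_{p\,\nmid\, m} f_n(m) m^{-\sigma} \le \sum_{m\ge 1} f_n(m) m^{-\sigma} < \infty$ applies. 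If I can run Theorem \ref{tauberian_theorem} on both $\zeta_{\Z^n}^R(s)$ and $\Xi_p(s)$ with the same abscissa $\alpha$ and the same pole order, the ratio of the two summatory functions will tend to the ratio of the leading Tauberian constants, which will be exactly $\zeta_{\Z^n,p}^R(\alpha)^{-1}$.

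The one substantial input I need is that $\zeta_{\Z^n,p}^R(s)$ is holomorphic and nowhere vanishing on the closed half-plane $\Re(s) \ge \alpha$, with $\zeta_{\Z^n,p}^R(\alpha)$ finite. Granting this, recall that du Sautoy and Grunewald \cite{dusautoy_grunewald} show $\zeta_{\Z^n}^R(s)$ satisfies condition (1) of Theorem \ref{tauberian_theorem}, and (as part of their analysis) that $s=\alpha$ is a pole of some finite order $\beta$; write $\zeta_{\Z^n}^R(s) = G(s)(s-\alpha)^{-\beta} + H(s)$ near $\alpha$ with $G,H$ holomorphic there and $G(\alpha) \ne 0$. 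Division by $\zeta_{\Z^n,p}^R(s)$ then preserves both hypotheses of Theorem \ref{tauberian_theorem}: $\Xi_p(s)$ is holomorphic on $\Re(s) = \alpha$ away from $s=\alpha$, and near $\alpha$ it equals $\bigl(G(s)/\zeta_{\Z^n,p}^R(s)\bigr)(s-\alpha)^{-\beta} + H(s)/\zeta_{\Z^n,p}^R(s)$ with leading coefficient $G(\alpha)/\zeta_{\Z^n,p}^R(\alpha) \ne 0$. Applying Theorem \ref{tauberian_theorem} to each series yields $N_n(X) \sim \tfrac{G(\alpha)}{\alpha\Gamma(\beta)} X^\alpha(\log X)^{\beta-1}$ and $\#\{R : p\nmid[\Z^n\colon R],\ [\Z^n\colon R]\le X\} \sim \tfrac{G(\alpha)/\zeta_{\Z^n,p}^R(\alpha)}{\alpha\Gamma(\beta)} X^\alpha(\log X)^{\beta-1}$, so the quotient tends to $\zeta_{\Z^n,p}^R(\alpha)^{-1}$. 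This limit is positive because $\zeta_{\Z^n,p}^R(\alpha) = 1 + \sum_{e\ge 1} f_n(p^e) p^{-e\alpha} \ge 1$.

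The main obstacle is the input itself. The sub-sum bound above already gives that $\zeta_{\Z^n,p}^R(s)$ converges, hence is holomorphic, on the open half-plane $\Re(s) > \alpha$; the point is to push this onto the line $\Re(s) = \alpha$ and to exclude zeros there. For this I would use the structure theory of these zeta functions: $\zeta_{\Z^n,p}^R(s)$ is a rational function of $p^{-s}$ with constant term $1$, whose denominator is a product of factors $1 - p^{a-bs}$ with $a \in \Z$ and $b \ge 1$, so all of its poles lie on the finitely many vertical lines $\Re(s) = a/b$; and since the Euler product $\prod_q (1-q^{a-bs})^{-1}$ already forces the abscissa of convergence of $\zeta_{\Z^n}^R(s)$ to be at least $(a+1)/b > a/b$, each of these lines lies strictly to the left of $\Re(s) = \alpha$. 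A matching bound on the numerator — a polynomial in $p^{-s}$ whose coefficients are small compared with $p^{\alpha}$ — then rules out zeros on $\Re(s) = \alpha$. For $n \le 4$ the explicit formulas of Theorem \ref{GeneratingFunctions} make all of this transparent, and in general it should follow from the analysis of du Sautoy and Grunewald; carefully verifying it — in particular handling the finitely many primes where the local factor is not given by the generic rational function, so that the "$+1$" increase in the abscissa argument must be checked by hand — is where I expect the real effort to go. Once the locations of the poles and zeros of the local factor are pinned down, the remaining Tauberian bookkeeping is routine.
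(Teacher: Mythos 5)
Your strategy is exactly the paper's: realize the numerator as the summatory function of $\zeta_{\Z^n}^R(s)/\zeta_{\Z^n,p}^R(s)$, run Delange's Tauberian theorem on both this quotient and on $\zeta_{\Z^n}^R(s)$ itself, and read off the ratio $\zeta_{\Z^n,p}^R(\alpha)^{-1}$ of leading constants. The "main obstacle" you identify --- holomorphy and good behavior of the local factor near $\Re(s) = \alpha$ --- is exactly what the paper resolves by a single citation rather than by the structure-theoretic analysis you sketch: the results of \cite[Section 4]{dusautoy_grunewald} already show that the abscissa of convergence of each local factor $\zeta_{\Z^n,p}^R(s)$ lies strictly to the left of $\alpha$, so $\zeta_{\Z^n,p}^R(\alpha)$ is a finite real number $\ge 1$; you should invoke this directly rather than re-derive it. With that input the remaining Tauberian bookkeeping you carried out is correct and matches the paper.
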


\begin{proof}
The asymptotic formula for the expression in the numerator comes from applying Theorem \ref{tauberian_theorem} to the function $\zeta_{\Z^n,p}^R(s)^{-1} \zeta_{\Z^n}^R(s)$.  Suppose that the rightmost pole of $\zeta_{\Z^n}^R(s)$ occurs at $s=\alpha$.  The results of \cite[Section 4]{dusautoy_grunewald} imply that the abscissa of convergence of each local factor $\zeta_{\Z^n,p}^R(s)$ occurs to the left of $\alpha$, and so $\zeta_{\Z^n,p}^R(\alpha)$ is a positive real number.  
The only difference in the Euler products defining the counting functions in the numerator and in the denominator of this proposition is the factor of $\zeta_{\Z^n,p}^R(s)$ that occurs in the denominator but not in the numerator.  Theorem \ref{tauberian_theorem} implies that the ratio in the proposition converges to $ \zeta_{\Z^n,p}^R(\alpha)^{-1}$.

\end{proof}

\begin{proof}[Proof of Theorem \ref{corank_n-1}]
Let $p_1,\ldots, p_r$ be distinct primes, $a_1, \ldots, a_r$ be positive integers, and $k = p_1^{a_1}\cdots p_r^{a_r}$.  The Euler product for $\zeta_{\Z^n}^R(s)$ reflects the fact that there is a bijection between subrings $R \subseteq \Z^n$ with $[\Z^n \colon R] = k$ and collections of subrings $(R_1,R_2,\ldots, R_r)$ where each $R_i \subseteq \Z^n$ is a subring with $[\Z^n\colon R_i] = p_i^{a_i}$.  For a description of how to find these subrings $R_i$ given the matrix $H$ in Hermite normal form for which $\col(H) = R$, see \cite[Section 4]{CKK}.  One can interpret this fact by noting that for any prime $p,\ \Z^n \hookrightarrow \Z_p^n$, so a subring $R \subseteq \Z^n$ gives a subring of $\Z_p^n$ for each $p$, where we get a proper subring if and only if $p \mid [\Z^n \colon R]$.

Proposition \ref{unique_Rpstar} implies that $(\Z^n/R)_p \cong (\Z/p\Z)^{n-1}$ if and only if $R_p = R_p^*$.  In this way, we see that there is a bijection between
\[
\mathcal{A}_X = \{\text{Subrings } R \subset \Z^n\colon [\Z^n \colon R] \le X \text{ and } (\Z^n/R)_p \cong (\Z/p\Z)^{n-1}\}
\]
and
\[
\mathcal{B}_X = \left\{\text{Subrings } R \subset \Z^n\colon [\Z^n \colon R] \le \frac{X}{p^{n-1}} \text{ and } p \nmid [\Z^n \colon R]\right\}.
\]
Proposition \ref{p-part_trivial} implies that as $X \rightarrow \infty$ the set $\mathcal{B}_X$ includes a positive proportion of all subrings of $\Z^n$ of index at most $X$.
\end{proof}


\appendix

\section*{Appendix: A conjecture for the cotype zeta function of $\Z^4$\\ (by Gautam Chinta)}

In this appendix we present a conjecture for the cotype zeta function of the ring $\Z^4$ and show how it is compatible with the results of Section \ref{sec_corank} and Section \ref{corank_smalln} on counting subrings of corank less than or equal to 3.  We begin with a definition of the cotype zeta function, which generalizes both the subring zeta function of Definition \ref{defn:subring_zeta} and the corank at most $k$ zeta function of Definition \ref{defn:corank_at_most}.  A finite index subring $R$ of $\Z^n$ will have corank at most $n-1$ since $(1,1,\ldots,1)\in R.$  Let $\alpha_1(R), \ldots, \alpha_{n-1}(R)$ be the invariant factors of the group $\Z^n/R$, where we set $\alpha_i(R)=1$ if $i$ is bigger than the corank of $R$.  For a tuple $\alpha=(\alpha_1,\ldots,\alpha_{n-1})$ of positive integers with $\alpha_{i+1}\mid\alpha_i$, set $f_{n}(\alpha)=f_n(\alpha_1,\ldots, \alpha_{n-1})$ to be the number of subrings of $\Z^n$ of cotype $\alpha.$

\begin{defnA}
  The {\em subring cotype zeta function of} $\Z^n$ is
\[\zeta_{\Z^n}^R(s_1, \ldots, s_{n-1})
=\sum_{\alpha_{n-1}\mid\alpha_{n-2}\mid\cdots\mid\alpha_1}
\frac{f_n(\alpha_1, \ldots, \alpha_{n-1})}{\alpha_1^{s_1}\cdots\alpha_{n-1}^{s_{n-1}}}.
\]
\end{defnA}
We will also refer to $\zeta_{\Z^n}^R(s_1, \ldots, s_{n-1})$ more simply as the {\em cotype zeta function of } $\Z^n$. Since $\alpha_1(R)\cdots \alpha_{n-1}(R)=[\Z^n:R]$ we have the relation
\begin{equation}
  \label{eq:app1}
  \zeta_{\Z^n}^R(s)=\zeta_{\Z^n}^R(s, \ldots, s).
\end{equation}
(We  hope no notational confusion will arise from letting the number of arguments distinguish between the single-variable subring zeta function on the left of (\ref{eq:app1}) and the multivariate cotype subring zeta function on the right.)  Just as with the single-variable subring zeta function, the cotype zeta function has an Euler product:
\begin{equation*}
  \zeta_{\Z^n}^R(s_1, \ldots, s_{n-1})=
\prod_p F_n(p;p^{-s_1},\dots, p^{-s_{n-1}})
\end{equation*}
for a rational function $F_n$ in $p^{-s_1},\dots, p^{-s_{n-1}}$.

A straightforward calculation yields
\begin{itemize}
\item For $\Z^2$:
$$F_{2}(p;x)=\frac 1{1-x}
$$
\item  For $\Z^3$:
$$F_3(p;x,y)=\frac {1+2x-2x^2y-x^3y}{(1-x)(1-xy)(1-px^2y)}
$$
\end{itemize}
where we have set $x=p^{-s_1}, y=p^{-s_2}.$

Note the functional equations
\begin{align} \label{eq:fes}
  F_2(1/p;1/x)&=-x\,F_2(p;x)\\ \nonumber
 F_3(1/p;1/x, 1/y)&=pxyF_3(p;x,y)
\end{align}
and the specialization
\begin{equation*}
F_{3}(p;x,x)=\frac{(1+x)^2}{(1-x)(1-px^3)},
\end{equation*}
in agreement with the local factor of $\zeta_{\Z^3}^R(s)$ in Theorem \ref{GeneratingFunctions}.

\subsection*{A conjecture for $\Z^4$}
The subring cotype zeta function of $\Z^n$ has not been explicitly computed for $n\geq 4$.  In this section, we give a conjecture for $n=4$ based on computer calculations.  By virtue of the Euler product, it suffices to define the local factor $F_4(p;x,y,z).$

\begin{conjA}\label{app:conjecture}
The local factor of the subring cotype zeta function of $\Z^4$ is
$$F_4(p;x,y,z)=\frac {N(p;x,y,z)}{D(p;x,y,z)}
$$
where the denominator is
\begin{equation*}
D(p;x,y,z)=
(1-x)(1-xy)(1-xyz)(1-px^2y)(1-p^2x^2yz)(1-p^2x^2y^2z)(1-p^3x^3y^2z)
\end{equation*}
and the numerator $N(p;x,y,z)$ is the polynomial of total degree 21 (in $x,y,z$) with coefficients as given in Table \ref{tab:1}.
\end{conjA}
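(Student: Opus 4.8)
The statement is a conjecture, so what follows is a plan for promoting it to a theorem together with the consistency checks I would run first as supporting evidence. By the Euler product it suffices to establish the identity $F_4(p;x,y,z)=N/D$ at a single prime, that is, to count subrings $R\subseteq\Z_p^4$ of each cotype $(p^{a_1},p^{a_2},p^{a_3})$ with $a_1\ge a_2\ge a_3\ge 0$. Every such $R$ is the column span of a unique matrix $A\in\HH_4(\Z)$ with $a_{44}=1$ and diagonal $(p^{e_1},p^{e_2},p^{e_3},1)$, and by Proposition \ref{SNF_prop} the cotype of $R$ is read off from the $p$-adic valuations of the gcds of the $1\times1$, $2\times2$ and $3\times3$ minors of $A$ (together with $\det A = p^e$). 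So the plan is: (i) stratify the count by the diagonal composition of $A$ and use Liu's decomposition of a subring of $\Z_p^n$ into a direct sum of irreducible subrings \cite{Liu}, noting that the cokernel splits along the corresponding coordinate blocks, to reduce to understanding \emph{irreducible} subring matrices of $\Z^j$ for $j\le 4$; (ii) for each such $j$ compute a generating function in the variables $x,y,z$ (or fewer, for smaller $j$) recording the cotype of an irreducible subring matrix as a function of its diagonal and its multiplicatively constrained off-diagonal entries; (iii) reassemble these via Liu's recurrence together with the operation on elementary divisors induced by a direct sum — take the union of the two multisets of cyclic factors — and simplify, with computer algebra, to the claimed rational function.

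The main obstacle is step (ii) for $j=4$. Fixing a diagonal $(p^{e_1},p^{e_2},p^{e_3},1)$ with all $e_i\ge 1$, the multiplicative closure relations among the columns force congruences on the off-diagonal entries $a_{12},a_{13},a_{23}$ (the $\Z^4$ analogue of the analysis in Section \ref{subring_matrices}); one must then determine, case by case in the $e_i$ and in the residues of the $a_{ij}$, the valuations of $\gcd(2\times2 \text{ minors})$ and $\gcd(3\times3 \text{ minors})$, and hence the cotype. This is precisely the refinement of Nakagawa's computation of $\zeta_{\Z^4}^R(s)$ \cite{Nakagawa} in which one keeps the full Smith normal form rather than only the determinant, and the proliferation of cases, together with the way the congruences interact with the minor gcds, is what makes a closed form hard to extract by hand — which is presumably why a closed form has so far only been conjectured. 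A potential alternative is to adapt the more conceptual $p$-adic orbit parametrization that Datskovsky and Wright used for $\Z^3$ \cite{DW}, but it is not clear this stays manageable for $n=4$.

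Before attempting that enumeration I would verify the conjectured $F_4$ against every specialization for which the answer is already known, both as a sanity check and because these pin it down strongly. First, setting $x=y=z$ should recover Nakagawa's local factor for $\zeta_{\Z^4}^R(s)$ in Theorem \ref{GeneratingFunctions}, after clearing denominators and comparing rational functions; the appendix records exactly this kind of check for $\Z^3$. Second, since sending the last argument to $\infty$, i.e.\ letting $z\to 0$, extracts the cotypes with $\alpha_3=1$, the function $F_4(p;x,x,0)$ must agree with the local factor of $\zeta_{\Z^4}^{R,(2)}(s)$ from Corollary \ref{crk2_4_cor}, and letting $y,z\to 0$ must give $F_4(p;x,0,0)=\tfrac{1+5x}{1-x}$ by Proposition \ref{cocyclic_zeta} with $n=4$. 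Third, I would check that $F_4$ obeys a functional equation under $(p,x,y,z)\mapsto(1/p,1/x,1/y,1/z)$ extending the ones for $F_2$ and $F_3$ in \eqref{eq:fes} (the denominator $D$ already transforms into $\pm$ a monomial times itself, so what remains is a palindromy property of $N$), and that the Euler product $\prod_p F_4(p;p^{-s},p^{-s},p^{-s})$ has a pole of order $\binom{4}{2}=6$ at $s=1$ with residue matching Corollary \ref{Nn_growth}, while its specializations match the pole orders and the proportions $p_{4,k}^R$ computed in Section \ref{corank_smalln}. Passing all of these, together with a direct machine verification that $N/D$ reproduces the coefficients $f_4(\alpha)$ for all cotypes $\alpha$ up to some large bound, is the evidence I would present; converting it into a proof requires carrying out step (ii) in full.
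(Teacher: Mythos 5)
The statement you are addressing is a conjecture, and the paper likewise does not prove it; it records a numerical method of discovery and several consistency checks. Your write-up is therefore correctly calibrated in treating it as such, and the sanity checks you propose are exactly the ones the appendix records: the diagonal specialization $x=y=z$ reproducing Nakagawa's local factor of $\zeta_{\Z^4}^R(s)$, the limits $z\to 0$ and then $y,z\to 0$ reproducing the local factors of $\zeta_{\Z^4}^{R,(2)}(s)$ and $\zeta_{\Z^4}^{R,(1)}(s)$ (the paper phrases this via \eqref{eq:app5}), and a functional equation under $(p,x,y,z)\mapsto(1/p,1/x,1/y,1/z)$ extending \eqref{eq:fes}, with the paper citing Voll's general functional equation result \cite{Voll2010} as the reason to expect it.

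Where you diverge from the paper is in how the conjectured $N/D$ is arrived at. The appendix simply enumerates all subrings of $\Z_p^4$ of index less than a large power of $p$ for $p=2,3,5,7$ together with their cotypes, and then fits the numerator under the assumption that its coefficients are polynomials in $p$; there is no attempt at your step (ii). Your plan to carry out a genuine proof — stratify by the diagonal composition, use Liu's decomposition into irreducible subrings, observe that the cokernel splits as a direct sum over the blocks so the cotype is the multiset union of the blocks' cotypes, and then reassemble via a cotype-refined version of Liu's recursion — is sound in outline and is, in fact, anticipated by the paper's final remark, which suggests a refinement of \cite[Proposition 4.4]{Liu} tracking cotype rather than just index. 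The bottleneck you flag is the real one: for an irreducible $4\times 4$ subring matrix one must track the valuations of $\gcd(2\times 2\text{ minors})$ and $\gcd(3\times 3\text{ minors})$ through the congruence conditions on $a_{12},a_{13},a_{23}$, and this case analysis has not been done by anyone. So your proposal is consistent with the paper, supplies the same evidence, and outlines a plausible route to an actual proof that neither you nor the paper carries out; it should be presented as a plan plus verification, not as a proof.
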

\begin{table}[htbp]
 \parbox{.45\linewidth}{
\centering
  \begin{tabular}{|l|l|} \hline
monomial & coefficient \\ \hline \hline
$1$ & $1$ \\ \hline
$x$ & $5$ \\ \hline
$x y$ & $6$ \\ \hline
$x^{2} y$ & $3 \, p - 2$ \\ \hline
$x^{2} y z$ & $p - 5$ \\ \hline
$x^{3} y$ & $3 \, p - 4$ \\ \hline
$x^{2} y^{2} z$ & $p - 6$ \\ \hline
$x^{3} y z$ & $-p - 1$ \\ \hline
$x^{3} y^{2}$ & $-6 \, p$ \\ \hline
$x^{3} y^{2} z$ & $-5 \, p^{2} - 5 \, p + 1$ \\ \hline
$x^{4} y^{2}$ & $-6 \, p$ \\ \hline
$x^{3} y^{3} z$ & $-p - 1$ \\ \hline
$x^{4} y^{2} z$ & $-14 \, p^{2} - 3 \, p + 5$ \\ \hline
$x^{4} y^{3} z$ & $-4 \, p^{3} - 6 \, p^{2} + 6 \, p + 1$ \\ \hline
$x^{5} y^{2} z$ & $p^{2} + p$ \\ \hline
$x^{4} y^{3} z^{2}$ & $-p^{3} + 5 \, p^{2}$ \\ \hline
$x^{5} y^{3} z$ & $-7 \, p^{3} + 8 \, p^{2} + 8 \, p$ \\ \hline
$x^{5} y^{3} z^{2}$ & $-p^{4} + 13 \, p^{2}$ \\ \hline
$x^{5} y^{4} z$ & $p^{2} + p$ \\ \hline
$x^{6} y^{3} z$ & $5 \, p^{3} + 4 \, p^{2} - p$ \\ \hline
  \end{tabular}}
 \parbox{.45\linewidth}{
\centering
  \begin{tabular}{|l|l|}\hline
monomial & coefficient \\ \hline \hline
$x^{11} y^{7} z^{3}$ & $p^{5}$ \\ \hline
$x^{10} y^{7} z^{3}$ & $5 \, p^{5}$ \\ \hline
$x^{10} y^{6} z^{3}$ & $6 \, p^{5}$ \\ \hline
$x^{9} y^{6} z^{3}$ & $-2 \, p^{5} + 3 \, p^{4}$ \\ \hline
$x^{9} y^{6} z^{2}$ & $-5 \, p^{5} + p^{4}$ \\ \hline
$x^{8} y^{6} z^{3}$ & $-4 \, p^{5} + 3 \, p^{4}$ \\ \hline
$x^{9} y^{5} z^{2}$ & $-6 \, p^{5} + p^{4}$ \\ \hline
$x^{8} y^{6} z^{2}$ & $-p^{5} - p^{4}$ \\ \hline
$x^{8} y^{5} z^{3}$ & $-6 \, p^{4}$ \\ \hline
$x^{8} y^{5} z^{2}$ & $p^{5} - 5 \, p^{4} - 5 \, p^{3}$ \\ \hline
$x^{7} y^{5} z^{3}$ & $-6 \, p^{4}$ \\ \hline
$x^{8} y^{4} z^{2}$ & $-p^{5} - p^{4}$ \\ \hline
$x^{7} y^{5} z^{2}$ & $5 \, p^{5} - 3 \, p^{4} - 14 \, p^{3}$ \\ \hline
$x^{7} y^{4} z^{2}$ & $p^{5} + 6 \, p^{4} - 6 \, p^{3} - 4 \, p^{2}$ \\ \hline
$x^{6} y^{5} z^{2}$ & $p^{4} + p^{3}$ \\ \hline
$x^{7} y^{4} z$ & $5 \, p^{3} - p^{2}$ \\ \hline
$x^{6} y^{4} z^{2}$ & $8 \, p^{4} + 8 \, p^{3} - 7 \, p^{2}$ \\ \hline
$x^{6} y^{4} z$ & $13 \, p^{3} - p$ \\ \hline
$x^{6} y^{3} z^{2}$ & $p^{4} + p^{3}$ \\ \hline
$x^{5} y^{4} z^{2}$ & $-p^{4} + 4 \, p^{3} + 5 \, p^{2}$ \\ \hline
  \end{tabular}}
  \caption{Coefficients of the numerator $N(p;x,y,z)$}
  \label{tab:1}
\end{table}
This conjecture was obtained by first enumerating all the subrings (and cotypes) of $\Z_2^4$ of index less than $2^{23}$.  A similar computation was done for subrings of $\Z_p^4$ for $p=3,5$ and $7$.  Putting these computations together under the assumption that $N(p;x,y,z)$ is a polynomial in $p$ leads to the Conjecture.

\begin{rmkA}
  Note the functional equation
\[F(1/p;1/x,1/y,1/z)=-p^3xyzF(p;x,y,z).\]
 This functional equation and the ones in (\ref{eq:fes}) above are consistent with the results of Voll \cite{Voll2010}.
Furthermore, the specialization $F(p;p^{-s}, p^{-s}, p^{-s})$ agrees with the  local factor of $\zeta_{\Z^4}^R(s)$ in Theorem \ref{GeneratingFunctions}.
\end{rmkA}

\begin{rmkA}
We can show that the conjecture is compatible with the results of Section \ref{sec_corank} for $n=4$.  Explicitly, the corank at most $k$ zeta function of $\Z^n$ can be obtained from the cotype zeta function as follows:
\begin{equation}
  \label{eq:app5}
\zeta_{\Z^n}^{R,(k)}(s) = \lim_{s_{k+1}, \ldots, s_{n-1}\to\infty}
\zeta_{\Z^n}^{R}(s,\ldots,s,s_{k+1}, \ldots, s_{n-1}).
\end{equation}
When $n=4$ and $k=1$ or 2, we substitute the expression from Conjecture A\ref{app:conjecture} into the righthand side of (\ref{eq:app5}) to get
\begin{align*}
  \zeta_{\Z^4}^{R,(1)}(s) &= \prod_p\frac{ 1+5p^{-s}}{1-p^{-s}},\\
\zeta_{\Z^4}^{R,(2)}(s) &=\zeta(s)^6\cdot\prod_p\frac{\left( 1+4p^{-s}+2p^{-2s}+(3p-4)p^{-3s}-6p^{1-5s}\right)(1-p^{-s})^4}
{1-p^{1-3s}}.
\end{align*}
These expressions for the corank at most 1 and 2 zeta functions agree with Proposition \ref{cocyclic_zeta} when $n=4$ and Corollary \ref{crk2_4_cor}.
\end{rmkA}

\begin{rmkA}
Using the expression for the cotype zeta function of $\Z^4$, it should be possible to similarly obtain results for counting subrings of $\Z^n$ of corank 1,2 or 3 for arbitrary $n$.  This will follow from an analogue of Liu's recursion relation \cite[Proposition 4.4]{Liu} expressing reducible subrings in $\Z^n$ in terms subrings of $\Z^m$, for $m$ less than $n$.  In fact, suitable refinement of Liu's relation will identify subrings of $\Z^n$ of corank $m$ with products of subrings of $\Z^{m'}$ with $m'\leq m.$
\end{rmkA}

\subsection*{Acknowledgments}
The authors thank Gautam Chinta and Ramin Takloo-Bighash for helpful discussions, and in particular for letting us know about the lower bound for subrings of $\Z^6$ given in Proposition \ref{Z6_lower}.  The second author was supported by NSF grants DMS 1802281 and DMS 2154223.  

\bibliographystyle{habbrv}
\bibliography{bib_all}

\end{document}